\documentclass[12pt, final]{l4dc2022}


\title[Piecewise Learning and Control with Stability Guarantees]{A Piecewise Learning Framework for Control of Unknown Nonlinear Systems with Stability Guarantees}
\usepackage{times}
\usepackage{graphicx}

\usepackage{mathrsfs}
\usepackage{amsmath,amssymb,amstext} 

\allowdisplaybreaks

\usepackage{pgfplots}
\pgfplotsset{compat=1.8}

\usepgfplotslibrary{fillbetween}  
\usepackage{xcolor}

\newtheorem{assumption}{\textbf{Assumption}}

\author{%
 \Name{Milad Farsi} \Email{mfarsi@uwaterloo.ca}\\
 \addr Department of Applied Mathematics, University of Waterloo, Canada 
 \AND
 \Name{Yinan Li}\Email{yinan.li@uwaterloo.ca} \\
 \addr Clearpath Robotics, Kitchener, Canada
 \AND
 \Name{Ye Yuan}
 \Email{yye@hust.edu.cn} \\
 \addr School of Artificial Intelligence \& Automation, Huazhong University of Science and Technology, China
 \AND
 \Name{Jun Liu} \Email{j.liu@uwaterloo.ca}\\
 \addr Department of Applied Mathematics, University of Waterloo, Canada 
}

\begin{document}

\maketitle

\begin{abstract}%
 We propose a piecewise learning framework for controlling nonlinear systems with unknown dynamics. While model-based reinforcement learning techniques in terms of some basis functions are well known in the literature, when it comes to more complex dynamics, only a local approximation of the model can be obtained using a limited number of bases. The complexity of the identifier and the controller can be considerably high if obtaining an approximation over a larger domain is desired. To overcome this limitation, we propose a general piecewise nonlinear framework where each piece is responsible for locally learning and controlling over some region of the domain. We obtain rigorous uncertainty bounds for the learned piecewise models. The piecewise affine (PWA) model is then studied as a special case, for which we propose an optimization-based verification technique for stability analysis of the closed-loop system. Accordingly, given a time-discretization of the learned {PWA} system, we iteratively search for a common piecewise Lyapunov function in a set of positive definite functions, where a non-monotonic convergence is allowed. This Lyapunov candidate is verified on the uncertain system to either provide a certificate for stability or find a counter-example when it fails. This  counter-example is added to a set of samples to facilitate the further learning of a Lyapunov function. We demonstrate the results on two examples and show that the proposed approach yields a less conservative region of attraction (ROA) compared with alternative state-of-the-art approaches. Moreover, we provide the runtime results to demonstrate potentials of the proposed framework in real-world implementations. 
\end{abstract}

\begin{keywords}%
  Nonlinear systems, learning-based control, system identification, piecewise affine, Lyapunov analysis
\end{keywords}

\section{Introduction}
The flexibility of piecewise affine (PWA) systems makes them suitable for different approaches in control. Hence, the control problem of piecewise systems has been extensively studied in the literature (see, e.g.,  \cite{marcucci2019mixed,zou2007robust,rodrigues2005piecewise,baotic2005optimal,strijbosch2020monotonically,christophersen2005optimal,rodrigues2003observer}). Moreover, various applications can be found for PWA systems, including robotics \citep{andrikopoulos2013piecewise,marcucci2017approximate}, automotive control  \citep{borrelli2006mpc,sun2019hybrid}, and power electronics \citep{geyer2008hybrid,vlad2012explicit}. 
Since PWA systems are highly adaptable to different problems, various techniques are presented to efficiently fit a piecewise model to data (see, e.g.,  \cite{toriello2012fitting,breschi2016piecewise,ferrari2003clustering,amaldi2016discrete,rebennack2020piecewise,du2021online}). A review of some of the techniques can be found in \cite{gambella2021optimization,garulli2012survey}. 

Solving the optimal control problem for PWA systems has also been the topic of various works. In \cite{zou2007robust}, the robust model predictive control (MPC) strategy is extended to PWA systems with polytopic uncertainty, where multiple PWA quadratic Lyapunov functions are employed for different vertices of the uncertainty polytope in different partitions. In another work by \cite{marcucci2019mixed}, hybrid MPC is formulated as a mixed-integer program to solve the optimal control problem for PWA systems. However, these techniques are only available in the open-loop form, which decreases their applicability for real-time control. 

Deep neural networks (DNN) offer an efficient technique for control in the closed loop. However, one drawback of DNN-based control is the difficulty in stability analysis. This becomes even more challenging when PWA are considered. \cite{chen2020learning} suggested a sample-efficient technique for synthesizing a Lyapunov function for the PWA system controlled through a DNN in the closed loop. In this approach, the analytic center cutting plane method (ACCPM)  \citep{goffin1993computation,nesterov1995cutting,boyd2004convex} is first used for searching for a Lyapunov function. Then, this Lyapunov candidate is verified on the closed-loop system using mixed-integer quadratic programming (MIQP). This approach relies on our knowledge of the exact model of the system and therefore cannot be directly implemented on an identified PWA system with uncertainty.   

Although learning with guarantees has motivated plenty of recent research  \cite{chang2020neural,chen2020learning,dai2021lyapunov}, fewer works have explicitly considered the model uncertainty. In \cite{berkenkamp2016safe}, the authors consider an approach that learns the region of attraction (ROA)  from experiments on a partially unknown system. Based on regularity assumptions on the model errors in terms of a Gaussian process (GP) prior, they employ an underlying Lyapunov function to determine an ROA from which the system is asymptotically stable with high probability. Even though a partially unknown model including uncertainty is considered in this approach, the approach may not computationally scale well with the number of samples considering the use of a GP learner. 

Model-based learning approaches in the literature are mainly categorized under reinforcement learning (RL) in two groups: value function and policy search methods. Approximate/adaptive dynamic programming techniques \citep{wang2009adaptive,lewis2009reinforcement,balakrishnan2008issues} as a well-known value-based approach can efficiently approximate the solution to the optimal control problem. Even though a set of polynomial bases, for instance, is known to be sufficient as a universal approximator over a compact domain  \citep{SOL:kamalapurkar2018model}, the number of bases required for a tight approximation of the dynamics over a given domain may be exceedingly high. This highly impedes implementations, especially in an online learning and control setting.

On the other hand, employing a piecewise approach can improve the applicability of model-based learning  greatly by keeping the online computations needed for updating the model and control in a tractable size, since at any instance only a particular mode of the system is involved. Hence, in this paper, we consider a partition of the domain consisting of different pieces, where for each piece, we run a local learner in terms of a limited number of bases using a structured online learning (SOL) approach \citep{farsi2020structured,farsi2021structured,farsi2022Quad} to obtain a piecewise feedback control. Then, by considering the special case of PWA systems, an optimization-based technique is employed to verify a variant of the piecewise model to obtain rigorous stability guarantees. Two examples are shown to demonstrate the advantages of the proposed framework in terms of providing less conservative stability guarantees in terms of the size of the verified ROA, compared with Lyapunov functions learned using neural networks for systems with known dynamics \citep{chang2020neural}.

\section{Problem Formulation} \label{sec2}
	Consider the nonlinear system in control-affine form
\begin{align} \label{SOL_sys}
\dot{x}=F(x,u)=f(x)+g(x)u = f(x)+\sum_{j=1}^m g_j(x)u_j, 
\end{align}
where $x\in D\subset \mathbb{R}^n$, $u\in\Omega\subset\mathbb{R}^m$, $f:\,D\rightarrow \mathbb{R}^n$, and $g:\,D\rightarrow \mathbb{R}^{n\times m}$. 

The cost functional to be minimized along the trajectory, starting from the initial condition $x(0)=x_0$, is considered to be in the following linear quadratic form
\begin{align} \label{SOL_cost_x}
J(x_0,u)=\lim_{T\rightarrow\infty}\int_{0}^T \mathrm{e}^{-\gamma t}\left(x^TQx+u^TRu\right) \mathrm{dt},
\end{align}
where $Q\in\mathbb{R}^{n\times n}$ is positive semi-definite, $\gamma\geq 0$ is the discount factor, and $R\in\mathbb{R}^{m\times m}$ is a diagonal matrix with only positive values, given by the design criteria.

\section{The Piecewise Learning and Control Framework} \label{sec3}

We approximate the nonlinear system (\ref{SOL_sys}) 
by a piecewise model with a bounded uncertainty
\begin{align} \label{PW_PW_sys_approx}
\dot{x}=W_\sigma\Phi(x)+\sum_{j=1}^{m} W_{j\sigma}\Phi(x)u_j+d_\sigma,
\end{align}
where $d_\sigma\in \mathbb{R}^n$ is a time-varying uncertainty,  $W_\sigma$ and $W_{j\sigma }\in\mathbb{R}^{n\times p}$ are the matrices of the coefficients for $\sigma \in \{1,2,\dots,n_\sigma\}$ and $j \in \{1,2,\dots,m\}$, with a set of differentiable bases $\Phi(x)=[ \phi_{1}(x) \quad\dots\quad \phi_{p}(x)]^T$, and $n_\sigma$ denoting the total number of pieces. 
Moreover, any piece of the system is defined over a convex set given by a set of linear inequalities as
$
\Upsilon_\sigma=\{x \in D| Z_\sigma x \leq z_\sigma \},
$
where $\sigma \in \{1,\dots,n_\sigma\}$ and $Z_\sigma$ and $z_\sigma$ are a matrix and a vector, respectively, of appropriate dimensions.

We assume that the set  $\{\Upsilon_\sigma\}$ forms a partition of the domain and its elements do not share any interior points, i.e. $\bigcup_{\sigma=1}^{n_\sigma}\Upsilon_\sigma= D$ and $\text{int}[\Upsilon_\sigma] \bigcap \text{int}[\Upsilon_l]= \emptyset$ for $\sigma \neq l$ and $\sigma$, $l \in \{1,2,\dots,n_\sigma\}$. Furthermore, the piecewise model is assumed to be continuous across the boundaries of $\{\Upsilon_\sigma\}$ (see Appendix \ref{app:continuity}). The control input and the uncertainty are assumed to be bounded and lie in the  sets
$
\Omega=\{u \in\mathbb{R}^m \arrowvert |u_j| \leq \bar{u}_j,\,\forall j \in \{1,2,\dots,m\}\}
$
and
$\Delta_\sigma=\{d_\sigma \in\mathbb{R}^n \arrowvert |d_{\sigma i}| \leq \bar{d}_{\sigma i},\,\forall i \in \{1,2,\dots,n\}\},
$ respectively. The uncertainty upper bound $\bar{d}_{\sigma }=(\bar{d}_{\sigma 1},\cdots,\bar{d}_{\sigma n})$ is to be determined. 

\subsection{System Identification} \label{PW_section:PW_identify}

Having defined the parameterized model of the system, we employ a system identification approach to update the system parameters. For each pair of samples obtained from the input and state of the system, i.e., $(x^s,u^s)$, we first locate the element in the partition $\{\Upsilon_\sigma\}$ that contains the sampled state $x^s$. Then, we locally update the system coefficients of the particular piece from which the state is sampled. In \cite{farsi2020structured}, the weights are updated according to
\begin{align}\label{PW_learning_cost}
{[\hat W_\sigma\quad \hat W_{1\sigma}\;\dots\; \hat W_{m\sigma}]}_{k}= \underset{\bar W}{\arg\min} \quad \|\dot X_{k\sigma}-\bar W\Theta_{k\sigma}\|^2_2,
\end{align} 
where $k$ is the time step, and $\Theta_{k\sigma}$ includes a matrix of samples with  
\begin{align*}
{\Theta_k}^s={[\Phi^T(x^s)\quad \Phi^T(x^s)u_1^s\quad \dots \quad \Phi^T(x^s)u_m^s]}^T_k,
\end{align*}
for the $s$th sample in the $\sigma$th partition. Correspondingly, $\dot X_{k\sigma}$ contains the sampled state derivatives. While in principle any identification technique can be used, e.g., \cite{brunton2016discovering,yuan2019data}, 
the linearity with respect to the coefficients allows us to employ least-squares techniques. In this paper, we implement the recursive least-squares (RLS) \citep{ljung1983theory,liu2016efficient,wu2015recursive} technique that provides a more computationally efficient way to update the parameters. Accordingly, only one sample at each time is used to update the weights, instead of processing a history of samples. 

\subsection{Feedback Control}\label{PW_section:PW_control}

 In \cite{farsi2020structured}, a matrix differential equation is proposed using a quadratic parametrization in terms of the basis functions to obtain a feedback control. Here, we adopt a similar learning framework, but consider a family of $n_\sigma$ differential equations, each of which corresponds to one particular mode of the system in the piecewise model. We integrate the following state-dependent Riccati differential equation in forward time: 
 \begin{align} \label{PW_PW_P_ode}
-\dot{P_\sigma}=&\bar{Q}+P_\sigma\frac{\partial{\Phi(x)}}{\partial{x}}W_\sigma+W_\sigma^T\frac{\partial{\Phi(x)}}{\partial{x}}^TP_\sigma-\gamma P_\sigma\nonumber\\ 
&-P_\sigma\frac{\partial{\Phi(x)}}{\partial{x}}\bigg(\sum_{j=1}^{m}W_{j\sigma}\Phi(x)r_j^{-1}\Phi(x)^TW_{j\sigma}^T\bigg)\frac{\partial{\Phi(x)}}{\partial{x}}^TP_\sigma.
\end{align}
The solution to the differential equation (\ref{PW_PW_P_ode}) characterizes the value function defined by
 \begin{align} \label{value} 
 V_\sigma=\Phi^TP_\sigma \Phi,
 \end{align}
based on which we obtain a piecewise control
\begin{align} \label{PW_PWA_control}
u_j=-r_j^{-1}\frac{\partial V_\sigma}{\partial{x}}^T g_j(x)=-\Phi(x)^Tr_j^{-1}P_\sigma\frac{\partial{\Phi(x)}}{\partial{x}}W_{j\sigma}\Phi(x).
\end{align}

\section{Analysis of Uncertainty Bounds}\label{sec4}

We use the uncertainty in the piecewise system (\ref{PW_PW_sys_approx}) to capture approximation errors in identification. In this section, we analyze the worst-case bounds to provide  guarantees for the proposed framework. 

There exist two sources of uncertainty that affect the accuracy of the identified model. The first is the mismatch between the identified model and the observations made. The latter may also be affected by the measurement noise. The second is due to unsampled areas in the domain. We can estimate the uncertainty bound for any piece of the model by combining these two bounds. In what follows, we discuss the procedure of obtaining these bounds in more detail.

\begin{assumption} \label{PW_assu_Lip_e}
	For any given $(x^s,u^s)$, let $F_i(x^s,u^s)$ be the $i$th element of $F(x^s,u^s)$. We assume that ${F}_i(x^s,u^s)$ can be measured with some tolerance as $\tilde{F}_i(x^s,u^s)$, where $|\tilde{F}_i(x^s,u^s)-{F}_i(x^s,u^s)|\leq \varrho_e|\tilde{F}_i(x^s,u^s)|$ with $0\leq\varrho_e< 1$ for all $i\in\{1,\cdots,n\}$.
\end{assumption}

We make predictions $\hat{F}_i(x^s,u^s)$ of the state derivatives for any sample using the identified model. Hence, we can  easily compute the distance between the prediction and the approximate evaluation of the system by using the samples collected for any piece. This gives the loss $|\hat{F}_i(x^s,u^s)-\tilde{F}_i(x^s,u^s)|$. The proof of the following result can be found in Appendix \ref{app:thm1}.

\begin{theorem}\label{thm:bound1}
	Let Assumption \ref{PW_assu_Lip_e} hold, and $S_{\Upsilon\sigma}$ denote the set of indices for sample pairs  $(x^s,u^s)$ such that $x^s \in \Upsilon_\sigma$. Then, an upper bound of the prediction error, regarding any sample $(x^s,u^s)$ for $s \in \{1,\dots,N_s\}$, is given by
	\begin{align*}
	|\hat{F}_i(x^s,u^s)-{F}_i(x^s,u^s)|&\leq  \bar d_{e\sigma i}:=\underset{s\in S_{\Upsilon\sigma}}{\max}(|\hat{F}_i(x^s,u^s)-\tilde{F}_i(x^s,u^s)|+\varrho_e |\tilde{F}_i(x^s,u^s)| ),
	\end{align*}
	where $\sigma \in \{1,\dots,n_\sigma\}$, and $i \in \{1,\dots,n\}$.
\end{theorem}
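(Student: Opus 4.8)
The plan is to certify the error between the model prediction $\hat F_i(x^s,u^s)$ and the \emph{true} state derivative $F_i(x^s,u^s)$ by inserting the measured value $\tilde F_i(x^s,u^s)$ and splitting with the triangle inequality. First I would write, for a sample index $s$ with $x^s\in\Upsilon_\sigma$,
\[
|\hat F_i(x^s,u^s)-F_i(x^s,u^s)|\;\le\;|\hat F_i(x^s,u^s)-\tilde F_i(x^s,u^s)|\;+\;|\tilde F_i(x^s,u^s)-F_i(x^s,u^s)|.
\]
The first term on the right is the computable prediction loss against the (possibly noisy) measurement discussed just before the statement; the second term is precisely the quantity that Assumption \ref{PW_assu_Lip_e} controls.

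Next I would invoke Assumption \ref{PW_assu_Lip_e} to replace the second term by $\varrho_e|\tilde F_i(x^s,u^s)|$, obtaining
\[
|\hat F_i(x^s,u^s)-F_i(x^s,u^s)|\;\le\;|\hat F_i(x^s,u^s)-\tilde F_i(x^s,u^s)|+\varrho_e|\tilde F_i(x^s,u^s)|.
\]
Finally, since $x^s\in\Upsilon_\sigma$ means $s\in S_{\Upsilon\sigma}$, the right-hand side is one particular value of the expression being maximized in the definition of $\bar d_{e\sigma i}$, hence it is bounded above by $\bar d_{e\sigma i}$. Taking this maximum turns the per-sample estimate into a single constant valid uniformly over all sampled states in the $\sigma$th cell, which is the asserted bound; the same argument runs coordinatewise for every $i\in\{1,\dots,n\}$ and every $\sigma\in\{1,\dots,n_\sigma\}$.

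There is no genuine obstacle here: the result is an immediate consequence of the triangle inequality together with the measurement-tolerance assumption. The only point worth emphasizing is that this bound is entirely \emph{data-dependent} — it controls the prediction error only at the sampled points — so that the passage to an error bound valid on the whole cell $\Upsilon_\sigma$, including unsampled regions, is a separate step (a Lipschitz-type estimate) that will be combined with Theorem \ref{thm:bound1} later in this section.
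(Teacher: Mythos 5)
Your proof is correct and follows exactly the same route as the paper's: insert the measured value $\tilde F_i(x^s,u^s)$ via the triangle inequality, bound the measurement mismatch by $\varrho_e|\tilde F_i(x^s,u^s)|$ using Assumption \ref{PW_assu_Lip_e}, and then pass to the maximum over $s\in S_{\Upsilon\sigma}$ to get the uniform constant $\bar d_{e\sigma i}$. No differences worth noting.
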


The samples may not be uniformly obtained from the  domain. Depending on how smooth the dynamics are, there might be unpredictable behavior of the system in the gaps among the samples. Hence, the predictions made by the identified model may be misleading in the areas we have not visited yet. To take this into account, we assume a Lipschitz constant is given for the system. More specifically, we let $\varrho_x\in\mathbb{R}^{n}_+$ and $\varrho_u\in\mathbb{R}^{n}_+$ denote the Lipschitz constants of $F(x,u)$ with respect to $x$ and $u$ on $D\times \Omega$, respectively. 
We use this to bound the uncertainty for the unsampled areas.

The procedure starts with searching for the largest gaps in the state and control spaces that do not contain any samples as described in detail in Appendix \ref{app:qperror}. Let  $(x^{s*},u^{s*})$ be the closest sample indexed in $S_{\Upsilon\sigma}$ to the center point $(c^*_{x\sigma},c^*_{u\sigma})$ of the sample gap (as a Euclidean ball) with radii  $(r_{x\sigma}^*,r_{u\sigma}^*)$. We need to compute the worst case of the prediction error at the center point that is given by $|\hat{F}_i(c^*_{x\sigma},c^*_{u\sigma})-{F}_i(c^*_{x\sigma},c^*_{u\sigma})|$, where $\hat{F}(\cdot,\cdot)$ denotes an evaluation of the identified model. However, according to Assumption \ref{PW_assu_Lip_e}, we do not have access to the original system to exactly evaluate ${F}(\cdot,\cdot)$. Therefore, we obtain the bound in terms of the approximate value instead.
\begin{theorem}\label{thm:bound2}
	Let Assumptions \ref{PW_assu_Lip_e}-\ref{PW_assu_Lip_estim} hold and $(r^*_{x\sigma},r^*_{u\sigma})$ be given by the solutions of (\ref{PW_gap_center_x}) and (\ref{PW_gap_center_u}) (details given in Appendix \ref{app:qperror}). Then, an upper bound for the prediction error can be obtained regarding all unvisited points $x \in \Upsilon_\sigma$ and $u \in \Omega$ as below
	\begin{align}
	|F_i(x,u)-\hat{F}_i(x,u)|\leq \bar d_{\sigma i}=\varrho_{ui} r_{u\sigma}^* + \varrho_{xi} r_{x\sigma}^*+ \bar d_{e\sigma i}+\hat \varrho_{ui} r_{u\sigma}^*+\hat \varrho_{xi} r_{x\sigma}^* \label{PW_bound_gap}.
	\end{align}
\end{theorem}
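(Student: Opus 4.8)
The plan is to estimate $|F_i(x,u)-\hat F_i(x,u)|$ at an arbitrary unvisited point $(x,u)\in\Upsilon_\sigma\times\Omega$ by routing through the nearest collected sample and splitting the error into three pieces: a smoothness term for the true dynamics $F$, the already-controlled on-sample mismatch from Theorem~\ref{thm:bound1}, and a smoothness term for the identified model $\hat F$. First I would recall from Appendix~\ref{app:qperror} that $(r^*_{x\sigma},r^*_{u\sigma})$ solving (\ref{PW_gap_center_x})--(\ref{PW_gap_center_u}) are precisely the dispersion radii of the sample set $\{(x^s,u^s):s\in S_{\Upsilon\sigma}\}$ over $\Upsilon_\sigma\times\Omega$, i.e., the half-widths of the largest sample-free Euclidean ball in state and in input space. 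The key geometric consequence is that for \emph{every} $(x,u)\in\Upsilon_\sigma\times\Omega$ there is an index $s\in S_{\Upsilon\sigma}$ with $\|x-x^s\|\le r^*_{x\sigma}$ and $\|u-u^s\|\le r^*_{u\sigma}$; the worst case is attained at the gap center $(c^*_{x\sigma},c^*_{u\sigma})$ with witness $(x^{s*},u^{s*})$, which is why it suffices to bound the error there.

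Next I would write the triangle inequality
\begin{align*}
|F_i(x,u)-\hat F_i(x,u)| &\le |F_i(x,u)-F_i(x^s,u^s)| + |F_i(x^s,u^s)-\hat F_i(x^s,u^s)| \\
&\quad + |\hat F_i(x^s,u^s)-\hat F_i(x,u)|,
\end{align*}
and bound the three terms in turn. Applying the Lipschitz property of $F$ on $D\times\Omega$ one argument at a time gives $|F_i(x,u)-F_i(x^s,u^s)|\le \varrho_{xi}\|x-x^s\|+\varrho_{ui}\|u-u^s\|\le \varrho_{xi}r^*_{x\sigma}+\varrho_{ui}r^*_{u\sigma}$. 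The middle term is bounded by $\bar d_{e\sigma i}$ directly by Theorem~\ref{thm:bound1}, since $s\in S_{\Upsilon\sigma}$. The last term is handled identically to the first, now using the Lipschitz constants $\hat\varrho_{xi},\hat\varrho_{ui}$ of the identified model $\hat F$ furnished by Assumption~\ref{PW_assu_Lip_estim}, yielding $\hat\varrho_{xi}r^*_{x\sigma}+\hat\varrho_{ui}r^*_{u\sigma}$. Summing the three estimates reproduces exactly the right-hand side $\bar d_{\sigma i}$ of (\ref{PW_bound_gap}), and since $(x,u)$ was arbitrary this completes the proof.

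The step I expect to require the most care is the geometric claim that $(r^*_{x\sigma},r^*_{u\sigma})$ simultaneously majorize the state- and input-distances from any unvisited point to \emph{the same} sample $s\in S_{\Upsilon\sigma}$ — this is where the precise formulation of the gap-search problems (\ref{PW_gap_center_x})--(\ref{PW_gap_center_u}) matters, and one must verify that the decoupled treatment of the state gap and the control gap is consistent with the joint triangle-inequality estimate above (either a single index $s$ works for both coordinate bounds, or using separate nearest samples for the $x$- and $u$-parts only loosens the bound, which is still acceptable). Everything else reduces to a triangle inequality, two coordinatewise Lipschitz estimates, and an appeal to Theorem~\ref{thm:bound1}.
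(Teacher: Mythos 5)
Your proposal is correct and follows essentially the same route as the paper's proof: the same triangle-inequality decomposition through the nearest sample $(x^{s*},u^{s*})$, the same one-argument-at-a-time Lipschitz estimates for $F$ and $\hat F$, the appeal to Theorem~\ref{thm:bound1} for the on-sample term, and the final replacement of $\lVert x-x^{s*}\rVert$, $\lVert u-u^{s*}\rVert$ by the gap radii $r^*_{x\sigma}$, $r^*_{u\sigma}$. The delicate point you flag --- whether a single sample index can simultaneously serve the state- and input-distance bounds, given that (\ref{PW_gap_center_x}) and (\ref{PW_gap_center_u}) treat the two spaces separately --- is handled no more carefully in the paper, which simply asserts that the largest distance to the closest sample occurs in the gap with radii $(r^*_{x\sigma},r^*_{u\sigma})$.
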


The proof can be found in Appendix \ref{app:thm2}. 

\section{Stability Verification for Piecewise-Affine Learning and Control} \label{sec5}

\subsection{Piecewise Affine Models}

A special case of system  (\ref{PW_PW_sys_approx}) can be obtained when we choose $\Phi(x)=\begin{bmatrix}
1 &x^T
\end{bmatrix}$. 

We consider system coefficients in the form of $W_\sigma=\begin{bmatrix}
C_\sigma & A_\sigma
\end{bmatrix}$ and $W_{j\sigma}=\begin{bmatrix}
B_{j\sigma} & 0
\end{bmatrix}$. Clearly, $A_\sigma$, $B_{j\sigma}$, and $C_\sigma$ can be used to rewrite the PWA system in the standard form 
\begin{align}\label{PW_PW_lin_sys_approx1}
\dot{x}=A_\sigma x+\sum_{j=1}^{m} B_{j\sigma} u_j+C_\sigma+d_\sigma,
\end{align}

\subsection{MIQP-based Stability Verification of PWA Systems}

In this section, we adopt an MIQP-based verification technique based on the approach presented in \cite{chen2020learning}. In this framework, by considering a few steps ahead, we verify that the Lyapunov function is decreasing. However, it may not be necessarily monotonic, meaning that it may be increasing in some steps and then be decreasing greatly in some other steps to compensate. Regarding the fact that this approach is inherently a discrete technique, we need to consider a discretization of (\ref{PW_PW_lin_sys_approx1}). By an Euler approximation, we have
\begin{align}  \label{PW_PW_lin_sys_approx_disc}
x_{k+1}=\check{F}_{d} (x_k,u_k)=\check{A}_\sigma x_k+\sum_{j=1}^{m} \check{B}_{j\sigma}u_{jk}+\check{C}_\sigma+d_{\sigma}, 
\end{align}
where $\check{A}_\sigma$, $\check{B}_{j\sigma}$, and $\check{C}_\sigma$ are the discrete system matrices of the same dimension as (\ref{PW_PW_lin_sys_approx1}). Moreover, we re-adjust the uncertainty bound as $\bar{d}_\sigma:=h\bar{d}_\sigma$, where $h$ denotes the time step. 
 
We refer the uncertain closed loop system with the control $u_{jk}=\omega_j(x_k)$ as
\begin{align} \label{PW_PW_dis_lin_cl}
x_{k+1}=\check{F}_{d,cl} (x_k).
\end{align}
For this system, let the convex set $\bar D=\{x\in D | Z_{\bar D} x \leq z_{\bar D}\}$ be a user-defined region of interest (ROI), within which obtaining a region of attraction (ROA) is desirable. 

\subsubsection{Learning and Verification of A Lyapunov Function}

Assuming $u_j=-r_j^{-1}B_{j\sigma}^TP_{3\sigma}x_k$, and defining $\check{A}_{cl,\sigma}=\check{A}_\sigma -\sum_{j=1}^{m}r_j^{-1}\check{B}_{j\sigma}B_{j\sigma}^TP_\sigma$, the discrete closed-loop system becomes $x_{k+1}=\check{A}_{cl,\sigma}x_k+\check{C}_\sigma+d_{\sigma}$.

Now, consider the Lyapunov function
\begin{align} \label{PW_nonmono_lyapunov}
V(x_k,\hat P)&=\begin{bmatrix}
x_k \\ x_{k+1}
\end{bmatrix}^T \hat P\begin{bmatrix}
x_k \\ x_{k+1}
\end{bmatrix}
\end{align}
characterized by $\hat P \in \mathscr{F},$ where
\begin{align*}
\mathscr{F}=\{\hat P \in \mathbb{R}^{2n\times 2n} | 0 \leq \hat P \leq I, V(x_{k+1},\hat P)-V(x_k,\hat P)<0, \forall x_k \in \bar D\backslash \{0\}, d_\sigma  \in \Delta_\sigma\}.
\end{align*}
 The structure of the Lyapunov function is suggested by \cite{chen2020learning} that employs a piecewise quadratic function to parameterize the Lyapunov function. This approach combines the non-monotonic Lyapunov function \cite{ahmadi2008non} and finite-step Lyapunov function \cite{bobiti2016sampling,aeyels1998new} techniques to provide a guarantee by looking at the next few steps. It should be noted that the Lyapunov function may not be necessarily decreasing within any single step, while it must be decreasing within the finite steps taken into account. 

\paragraph{The Learner:}
To realize a Lyapunov function, one needs a mechanism to look for the appropriate values of $\hat P$ within $\mathscr{F}$. For this purpose, we obtain an over-approximation of $\mathscr{F}$ by considering an only finite number of elements in $(\bar D,\Delta)$. 
Let us first define the increment on the Lyapunov function as
\begin{align*}
\Delta V(x,\hat P)&= V(\check{F}_{d,cl}(x),\hat P)-V(x,\hat P)=\begin{bmatrix}
\check{F}_{d,cl}(x)\\\check{F}^{(2)}_{d,cl}(x)
\end{bmatrix}^T \hat P \begin{bmatrix}
\check{F}_{d,cl}(x)\\\check{F}^{(2)}_{d,cl}(x)
\end{bmatrix} - \begin{bmatrix}
x\\\check{F}_{d,cl}(x)
\end{bmatrix}^T \hat P \begin{bmatrix}
x\\\check{F}_{cl}(x)
\end{bmatrix},
\end{align*}
where $\check{F}^{(2)}_{d,cl}(x)=\check{F}_{d,cl}(\check{F}_{d,cl}(x))$.

Furthermore, assume that the set of $N_s$ number of samples is given as below
\begin{align*}
\mathscr{S}=\{(x,\check{F}_{d,cl}(x),\check{F}_{d,cl}^{(2)}(x))_1,\dots,(x,\check{F}_{d,cl}(x),\check{F}_{d,cl}^{(2)}(x))_{N_s}\}.
\end{align*}
Note that $\mathscr{S}$ implicitly includes samples of the disturbance input and the state. 

Now, using $\mathscr{S}$ we obtain the over-approximation
\begin{align*}
\tilde{\mathscr{F}}=\{\hat P \in \mathbb{R}^{2n\times 2n} | 0 \leq \hat P \leq I, \Delta V(x,\hat P)\leq 0 , \forall x \in \mathscr{S},d_\sigma  \in \Delta_\sigma\}.
\end{align*}

To find an element in $\tilde{\mathscr{F}}$, there exist efficient iterative techniques that are well-known as cutting-plane approaches. See e.g. \cite{atkinson1995cutting,elzinga1975central,boyd2007localization}. In \cite{chen2020learning}, the analytic center cutting-plane method (ACCPM) \citep{goffin1993computation,nesterov1995cutting,boyd2004convex} is employed in an optimization problem:
\begin{align} \label{PW_ACCPM}
\hat P^{(i)}=\underset{\hat P}{\arg \min} \quad -\sum_{x \in \mathscr{S}_i} \log(-\Delta V(x,\hat P))-\log \det(I-\hat P)-\log \det(\hat P)
\end{align}
where $i$ is the iteration index. If feasible, the log-barrier function in the first term guarantees the solution within $\tilde{\mathscr{F}}$ for which the negativity of the Lyapunov difference holds. The other two terms ensure $0 \leq \hat P^{(i)} \leq I$. The solution gives a Lyapunov function 
$V$ based on the set of the samples $\mathscr{S}_i$ in the $i$th stage. On the other hand, if a solution does not exist, the set ${\mathscr{F}}$ is concluded to be empty.

\paragraph{The Verifier:}
The Lyapunov function candidate suggested by (\ref{PW_ACCPM}) may not guarantee asymptotic stability for all $ x \in \bar D$ and $d_\sigma  \in \Delta_\sigma$ since only the sampled space was considered. Therefore, in the next step, we need to verify the Lyapunov function candidate for the uncertain system. To do so, a mixed-integer quadratic program is solved based on the convex hull formulation of the PWA:
\begin{align}\label{PW_verify1}
\underset{\mathrm{x}^j,\mathrm{u}^j,\mathrm{d}^j,\mu^j}{\max} \qquad \quad  &\begin{bmatrix}
\mathrm{x}^1\\\mathrm{x}^2
\end{bmatrix}^T \hat P^{(i)} \begin{bmatrix}
\mathrm{x}^1\\\mathrm{x}^2
\end{bmatrix} - \begin{bmatrix}
\mathrm{x}^0\\\mathrm{x}^1
\end{bmatrix}^T \hat P^{(i)}  \begin{bmatrix}
\mathrm{x}^0\\\mathrm{x}^1
\end{bmatrix} \\
\text{subject to} \nonumber\\
&Z_{\bar D} \mathrm{x}^0 \leq z_{\bar D}, \lVert \mathrm{x}^0\rVert_\infty \geq \epsilon \label{PW_verify3}\\
&\quad \mathrm{u}^j=\omega(\mathrm{x}^j)\label{PW_verify4}\\
&\quad Z_\sigma \mathrm{x}^j_\sigma \leq \mu^j_\sigma z_\sigma, Z_u \mathrm{u}_\sigma \leq \mu^j_\sigma z_u, |\mathrm{d}^j_{\sigma i}| \leq \mu^j_\sigma \bar{d}_{\sigma i},\label{PW_verify6}\\
&\quad (1,\mathrm{x}^j,\mathrm{u}^j,\mathrm{d}^j,\mathrm{x}^{j+1})= \sum_{\sigma=1}^{N_\sigma}(\mu^j_\sigma,\mathrm{x}^j_\sigma,\mathrm{u}^j_\sigma,\mathrm{d}^j_\sigma,A_\sigma \mathrm{x}^j_\sigma+B_\sigma \mathrm{u}^j_\sigma+\mu^j_\sigma c_\sigma+\mathrm{d}^j_\sigma)\label{PW_verify7}\\
&\quad \mu_\sigma \in \{0,1\}, \forall\sigma \in \{1,\dots,N_\sigma\}, i \in \{1,\dots,n\},j \in \{0,1\}, \label{PW_verify8} \end{align}
where a ball of radius $\epsilon$ around the origin is excluded from the set of states, and $\epsilon$ is chosen small enough in (\ref{PW_verify3}). This is due to Remark \ref{PW_remark_local_model_limitation} in Appendix \ref{app:stability} and the fact that the numerical value of the objective becomes considerably small when approaching the origin. This makes the negativity of the objective too hard to verify around the origin. For more details in the implementation of the algorithm, we refer the reader to \cite{chen2020learning}.

 The system is given by (\ref{PW_verify7}) and (\ref{PW_verify8}). To define the piecewise system in a mixed-integer problem, similar to \cite{chen2020learning}, we use the convex-hull formulation of piecewise model that is presented in \cite{marcucci2019mixed}. However, to consider the uncertainty, we compose a slightly different system where we define extra variables to model the disturbance input.

Constraints (\ref{PW_verify3}), and (\ref{PW_verify6}) define the sets of the initial condition, the state, the control, and the disturbance inputs, respectively. Furthermore, the feedback control is implemented by (\ref{PW_verify4}).

To certify the closed-loop system as asymptotically stable, the optimal value returned by the MIQP (\ref{PW_verify1}) is required to be negative. Otherwise, the argument $({\mathrm{x}^0}^*,{\mathrm{x}^1}^*,{\mathrm{x}^2}^*)$ of the optimal solution is added to the set of samples ${\mathscr{S}}$ as a counter-example.

\subsection{Stability Analysis}

Combining the uncertainty bounds in Section \ref{sec4} and the Lyapunov-based verification results of this section, we are able to prove the following practical stability results of the closed-loop system. 

\begin{theorem}\label{thm:stability}
Suppose that the MIQP (\ref{PW_verify1}) yields a negative optimal value. Let $B_\epsilon$ denote the set $\{x\in\mathbb{R}^n | \rVert x \rVert_\infty \leq \epsilon\}$, i.e., the ball of radius $\epsilon$ in infinity norm around the origin. 
Then the set $B_\epsilon$ is asymptotically stable for the closed-loop system  (\ref{PW_PW_dis_lin_cl}). The largest sub-level set of $V$, i.e., $\{x\in\mathbb{R}^n\,|\,V(x)\le c\}$ for some $c$, contained in $\bar D$ is a verified under-approximation of the real ROA. 
\end{theorem}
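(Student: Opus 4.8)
The plan is to read off from the negativity of the MIQP (\ref{PW_verify1}) a robust finite-step Lyapunov inequality, and then run a sublevel-set invariance-and-convergence argument for the uncertain closed loop (\ref{PW_PW_dis_lin_cl}). The first task is to check that the disturbance-augmented convex-hull encoding (\ref{PW_verify6})--(\ref{PW_verify8}), obtained from the exact big-$M$/convex-hull formulation of \cite{marcucci2019mixed} by adding the variables $\mathrm{d}^j$, exactly describes the two-step reachable tuples of (\ref{PW_PW_dis_lin_cl}): using that $\{\Upsilon_\sigma\}$ partitions $D$ and that the binaries $\mu^j_\sigma$ pick out a single active mode at each step, the feasible $(\mathrm{x}^0,\mathrm{x}^1,\mathrm{x}^2)$ are precisely the triples with $\mathrm{x}^0\in\bar D$, $\|\mathrm{x}^0\|_\infty\ge\epsilon$, $\mathrm{x}^1=\check{F}_{d,cl}(\mathrm{x}^0)$ and $\mathrm{x}^2=\check{F}_{d',cl}(\mathrm{x}^1)$ for some admissible $d,d'$ (each disturbance ranging over $\Delta_\sigma$ for the active $\sigma$). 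Hence a negative optimum of (\ref{PW_verify1}) is equivalent to $\Delta V(x,\hat P^{(i)})<0$ for every $x\in\bar D$ with $\|x\|_\infty\ge\epsilon$ and every admissible two-step disturbance realization. Writing $W_k:=V(x_k,x_{k+1})$ along any trajectory of (\ref{PW_PW_dis_lin_cl}), this is exactly $W_{k+1}<W_k$ whenever $x_k\in\bar D$ and $\|x_k\|_\infty\ge\epsilon$, i.e. $V$ is a finite-step, non-monotonic Lyapunov function on $\bar D\setminus B_\epsilon$ in the sense of \cite{ahmadi2008non,bobiti2016sampling}. As $\{x\in\bar D:\|x\|_\infty\ge\epsilon\}$ is compact and the continuous worst-case increment $x\mapsto\max_{d,d'}\Delta V$ is strictly negative on it, it attains a maximum $-\delta<0$, so $W_{k+1}\le W_k-\delta$ on that set.

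\emph{Positive definiteness and an invariant family of sublevel sets.} The barriers $-\log\det(\hat P)$ and $-\log\det(I-\hat P)$ in (\ref{PW_ACCPM}) make $\hat P^{(i)}$ strictly positive definite with $\hat P^{(i)}\le I$, so $\lambda_{\min}(\hat P^{(i)})\|x_k\|^2\le W_k$ and $W_k$ vanishes only at $x_k=0$. I would then set $\mathcal V(x):=\max_{d\in\Delta_\sigma}V(x,\check{F}_{d,cl}(x))$ (the worst-case one-step value, which is the object ``$V(x)$'' intended in the statement), define $\mathcal R_c:=\{x:\mathcal V(x)\le c\}$, and let $c^\star$ be the largest $c$ with $\mathcal R_c\subseteq\bar D$. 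The crux is that each $\mathcal R_c$ with $c\le c^\star$ is robustly forward invariant: given $x_0\in\mathcal R_c$ with $\|x_0\|_\infty\ge\epsilon$, we have $x_0\in\bar D\setminus B_\epsilon$, and applying the two-step bound with the realized $d_0$ and an \emph{arbitrary} continuation $d'$ gives $V(x_1,\check{F}_{d',cl}(x_1))<V(x_0,x_1)\le\mathcal V(x_0)\le c$; maximizing over $d'$ yields $\mathcal V(x_1)\le c$, hence $x_1\in\mathcal R_c\subseteq\bar D$, and the step repeats. For $x_0\in\mathcal R_c\cap B_\epsilon$ the decrease is not certified; this case needs only the weaker observation that one step from $B_\epsilon$ stays in a bounded neighborhood, and it is the reason the conclusion is a \emph{practical} stability statement --- if necessary I would replace the target $B_\epsilon$ by the smallest $\mathcal V$-sublevel set containing its one-step image.

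\emph{Convergence and conclusion.} For $x_0\in\mathcal R_{c^\star}$, invariance keeps the trajectory in $\mathcal R_{c^\star}\subseteq\bar D$, so while $\|x_k\|_\infty\ge\epsilon$ we have $W_{k+1}\le W_k-\delta$; since $W_k\ge 0$, the trajectory must enter $B_\epsilon$ within at most $\lceil W_0/\delta\rceil$ steps. Combining the uniform decrement (attractivity), invariance of the family $\{\mathcal R_c\}_{c\le c^\star}$, and the class-$\mathcal K$ lower bound on $W_k$ (Lyapunov stability) yields asymptotic stability of $B_\epsilon$ for (\ref{PW_PW_dis_lin_cl}) --- ``practical'' because the uncertainty offset $d_\sigma$ near the origin prevents convergence strictly below $B_\epsilon$ --- with $\mathcal R_{c^\star}$ contained in its region of attraction. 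Since $\mathcal R_{c^\star}$ is, by construction, the largest $V$-sublevel set inside $\bar D$ and the certification rests on the MIQP's negative optimum together with $0\prec\hat P^{(i)}\le I$, it is a verified under-approximation of the true ROA.

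\emph{Main obstacle.} The Lyapunov bookkeeping is routine; the two delicate points are (i) making the exactness of the disturbance-augmented convex-hull MIQP airtight --- one must verify that the added variables $\mathrm{d}^j$ together with the scalings $\mu^j_\sigma\bar d_{\sigma i}$ in (\ref{PW_verify6})--(\ref{PW_verify7}) recover exactly the admissible disturbance set, with no spurious and no missing realizations --- and (ii) handling the fact that $B_\epsilon$ itself need not be forward invariant, since the decrease of $V$ is verified only on its complement; this is what forces the honest conclusion to be practical/ultimately-bounded stability and requires care in pinning down which neighborhood of $B_\epsilon$ is actually the stable, attractive set.
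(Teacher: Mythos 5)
Your proposal is correct and follows essentially the same route as the paper: a negative optimal value of the MIQP (\ref{PW_verify1}) is read as a robust Lyapunov decrease condition for the uncertain closed loop on $\bar D\setminus B_\epsilon$, from which asymptotic (practical) stability of $B_\epsilon$ and the sub-level-set under-approximation of the ROA follow by set-stability Lyapunov arguments. The only difference is one of detail rather than of method: the paper delegates the invariance/convergence bookkeeping to standard set-stability references and treats the non-invariance of $B_\epsilon$ and the convergence-to-origin gap in a separate remark, whereas you spell out the exactness of the disturbance-augmented convex-hull encoding, the uniform decrement via compactness, the worst-case one-step sublevel sets, and the practical-stability caveat explicitly.
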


The proof can be found in Appendix \ref{app:stability}. Remark \ref{PW_remark_local_model_limitation} in Appendix \ref{app:stability} also discusses how to bridge the gap between convergence to $B_\epsilon$ and the convergence to the origin. 

\section{Numerical Results} \label{sec6}

To validate the proposed piecewise learning and verification  technique we implemented the approach on the pendulum system as (\ref{SOL_pend}) and the dynamical vehicle system \cite{pepy2006path}. Moreover, we compared the results with other techniques presented in the literature. To make a fair comparison, we have taken the parameters of the system from \cite{chang2020neural}. We performed all the simulations in Python 3.7 on a 2.6 GHz Intel Core i5 CPU.

\subsection{Pendulum System}
 For the pendulum system, we discuss the simulation results in three sections. In the first section, we will explain the procedure of identifying the uncertain PWA model with a piecewise feedback control. In the second section, we verify the closed-loop uncertain system and obtain an ROA in $\bar D$. In the third section, we will present the comparison results.   
\subsubsection{Identify and Control}
Control objective is to stabilize the pendulum at the top equilibrium point given by $x_{\text{eq}}=(0,0)$. First, we start with learning a piecewise model together with the uncertainty bounds, and the feedback control. For this purpose, we sample the system, and update our model as discussed in section \ref{PW_section:PW_identify}. We set the sampling time as $h=5 $ms. Accordingly, the value function and the control rule are updated online as in section \ref{PW_section:PW_control}. Then, to verify  the value to be decreasing within each mode, it only remains to calculate the uncertainty bounds using the results obtained in section \ref{sec4}.

To make a visualization of the nonlinearity in the pendulum system (\ref{SOL_pend}) possible, we portray the second dynamic assuming $u=0$ in Fig. \ref{fig:dyn_pend}, where the first dynamic is only linear. The procedure of learning is illustrated through several stages in Fig. \ref{Fig:learning_model}. In the first column from the left, we illustrated the estimations only for the second dynamic with $u=0$ to be comparable to Fig. \ref{fig:dyn_pend}. Accordingly, it can be observed that the system identifier is able to closely approximate the nonlinearity with a piecewise model. More details on the uncertainty bounds obtained are provided in Appendix \ref{app:numerical}.

\begin{figure}[htbp]
	\floatconts
	{Fig:learning_model}
	{\caption{ The procedure for learning the dynamics by the PWA model is illustrate step-by-step that shows the convergence of the identifier. Subfigure (g) illustrates the second dynamic of pendulum system (\ref{SOL_pend}) assuming $u=0$ that is $f_2(x_1,x_2)$. Subfigures (a)-(f) show the improvement of estimations of $f_2(x_1,x_2)$ given by (g), as the number of samples increases.} }
	{%
	\begin{minipage}{.6\textwidth}
		\subfigure[]{\label{fig:dyn_pend0}%
			\includegraphics[width=0.3\linewidth]{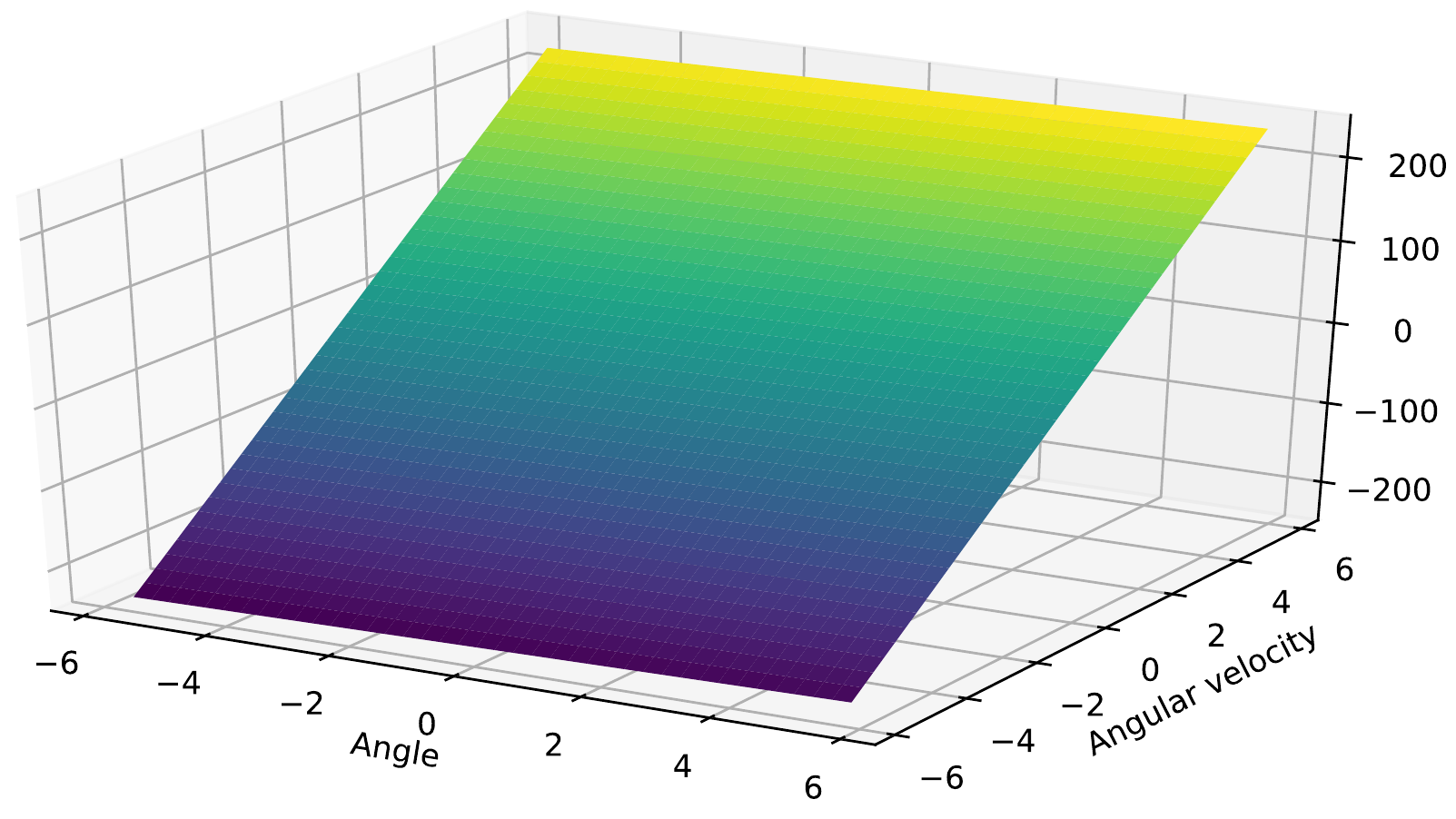}}%
		\quad
		\subfigure[]{\label{fig:dyn_pend1}%
			\includegraphics[width=0.3\linewidth]{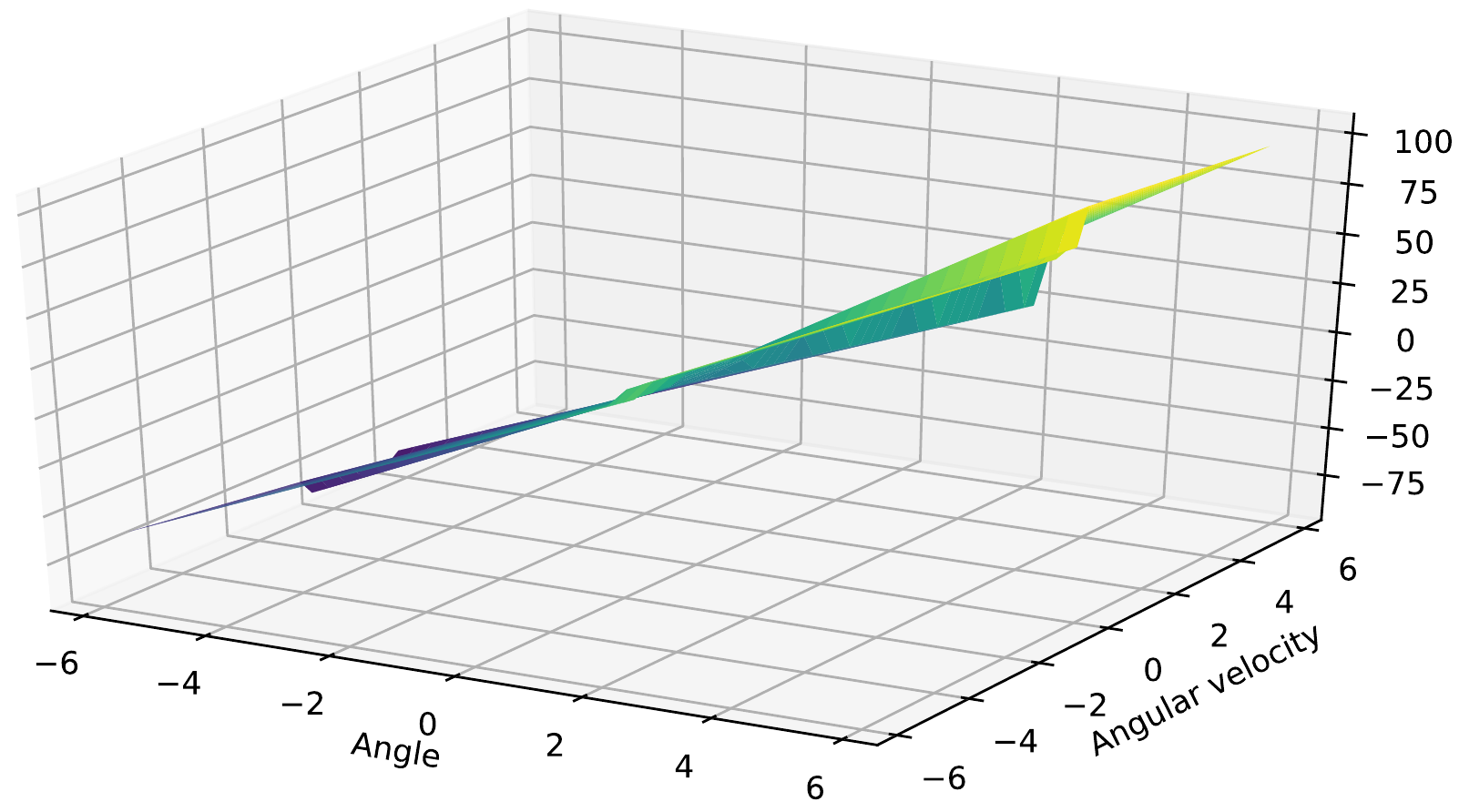}}
		\quad
		\subfigure[]{\label{fig:dyn_pend2}%
			\includegraphics[width=0.3\linewidth]{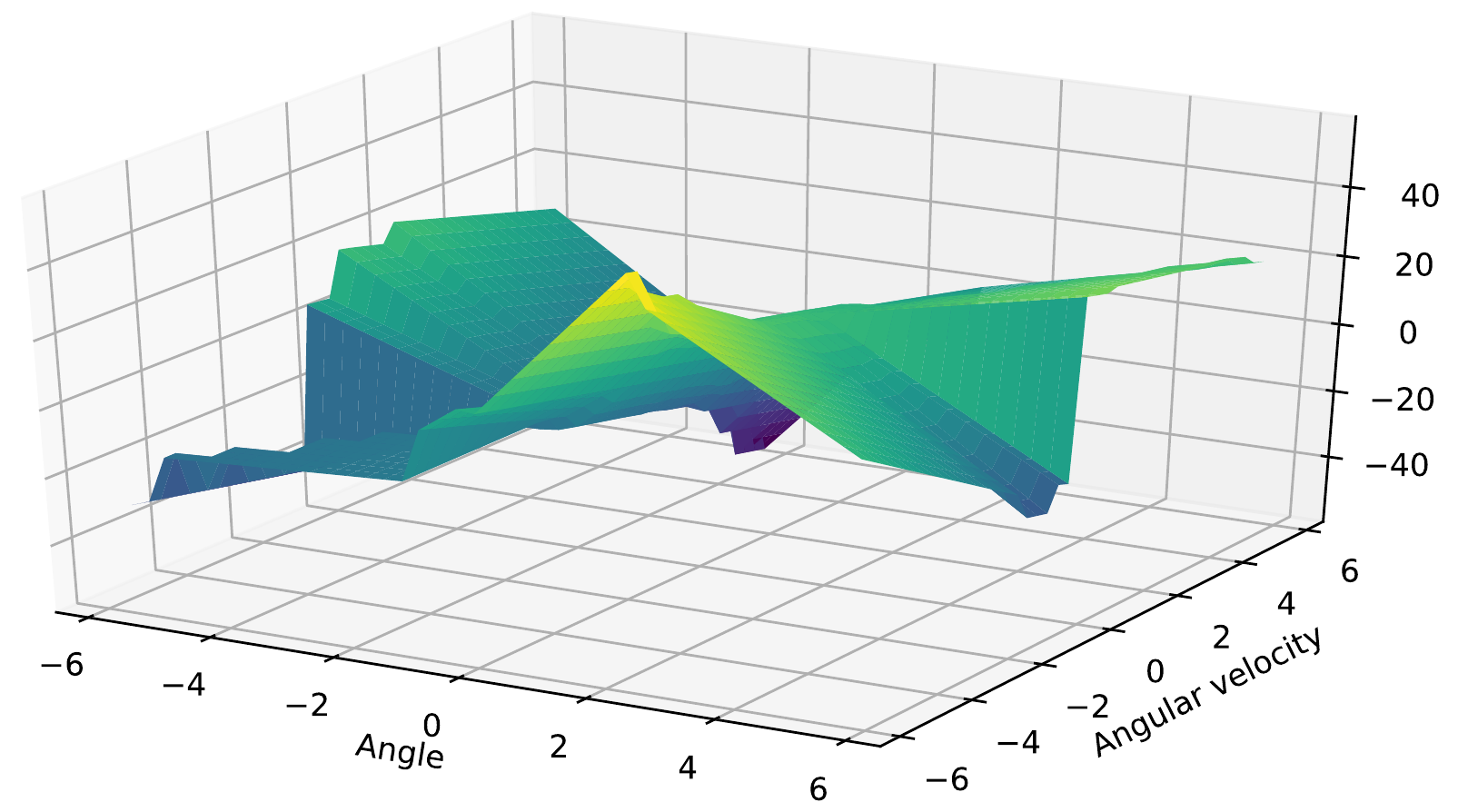}}
		\newline 
		\subfigure[]{\label{fig:dyn_pend3}%
			\includegraphics[width=0.3\linewidth]{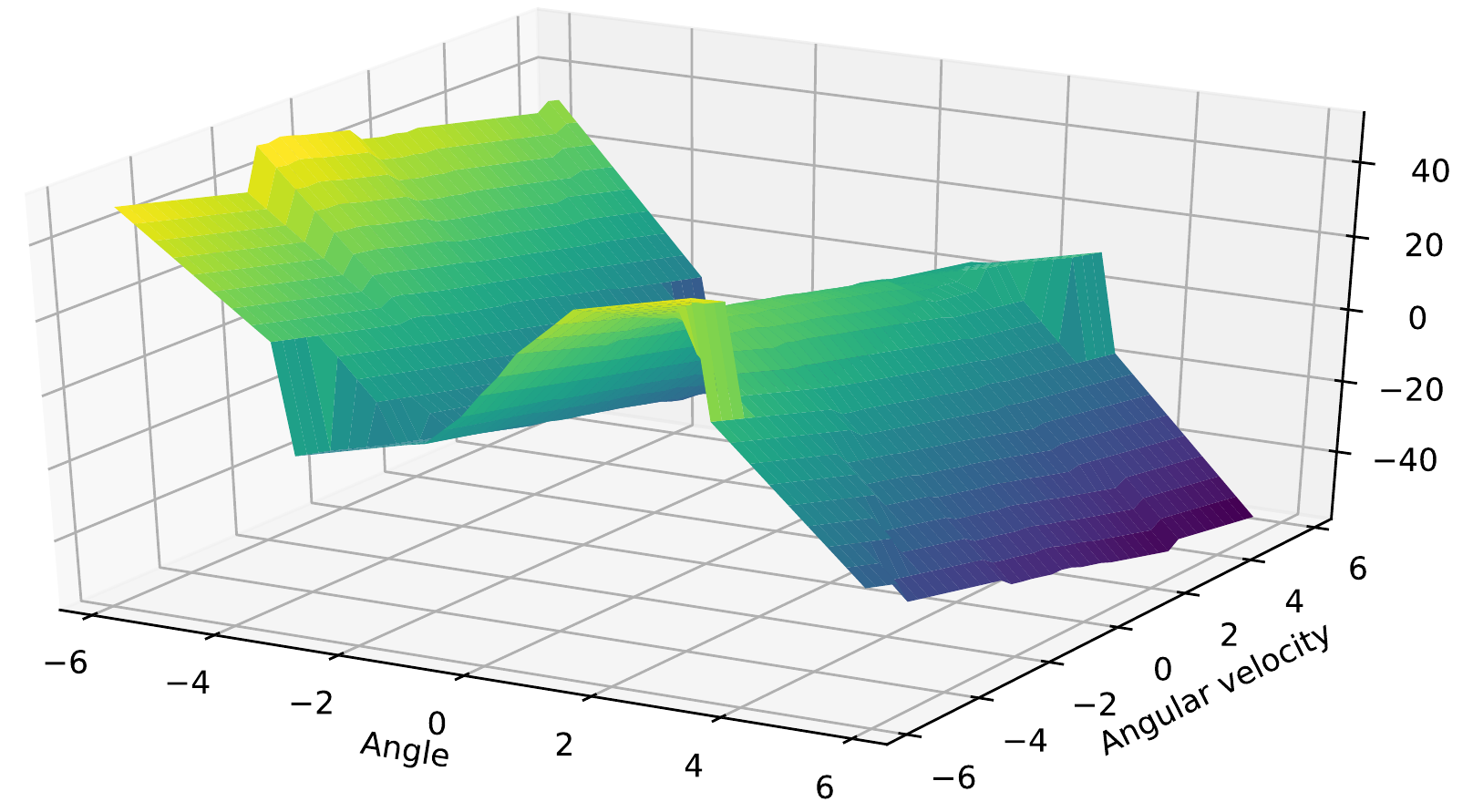}}%
		\quad
		\subfigure[]{\label{fig:dyn_pend4}%
			\includegraphics[width=0.3\linewidth]{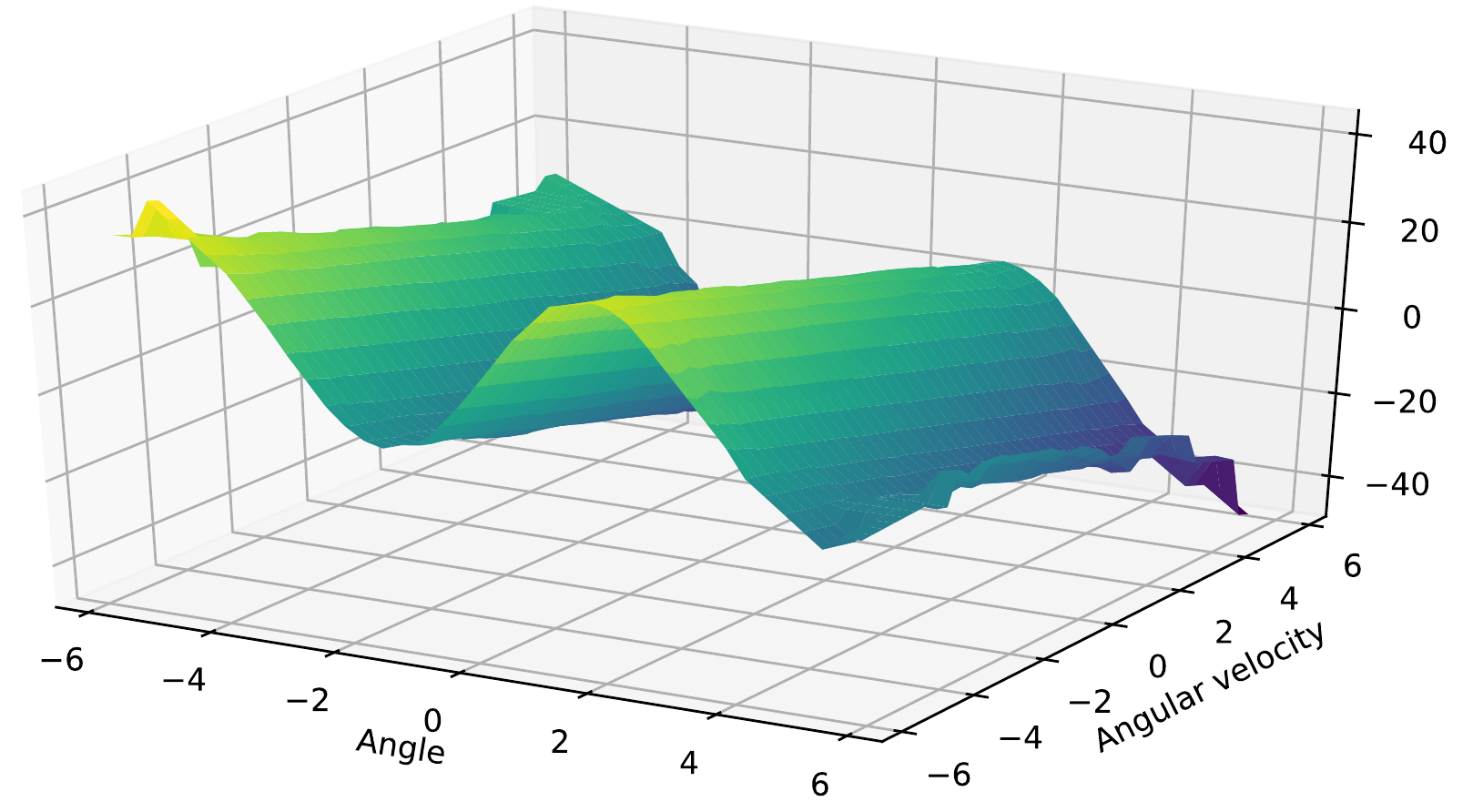}}
		\quad
		\subfigure[]{\label{fig:dyn_pend5}%
			\includegraphics[width=0.3\linewidth]{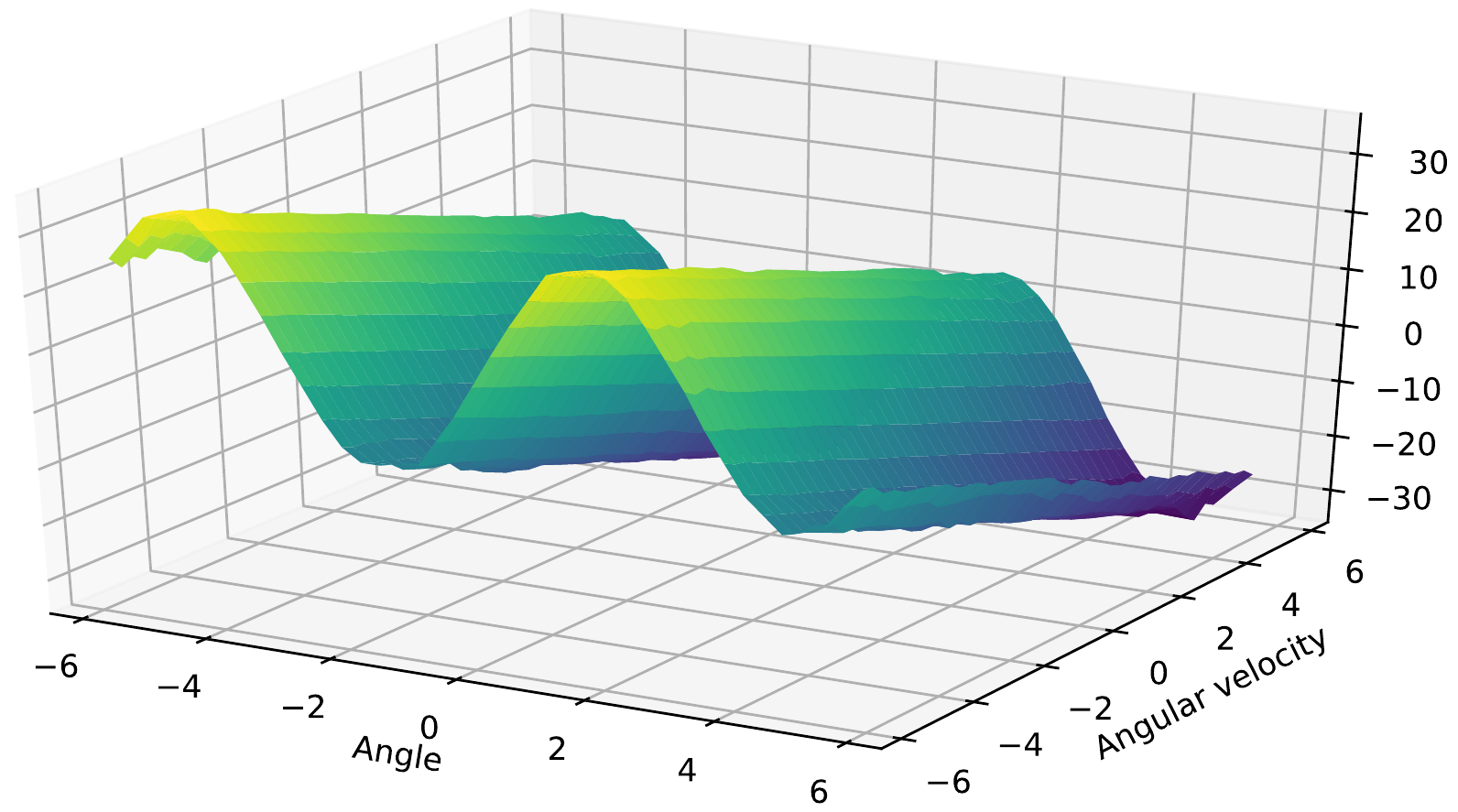}}
	 \end{minipage}%
 \noindent\fcolorbox{gray}{lightgray}{%
 	\begin{minipage}{\dimexpr0.3\textwidth-2\fboxrule-2\fboxsep\relax}
 				\subfigure[]{\label{fig:dyn_pend}%
 			\includegraphics[width=1\linewidth]{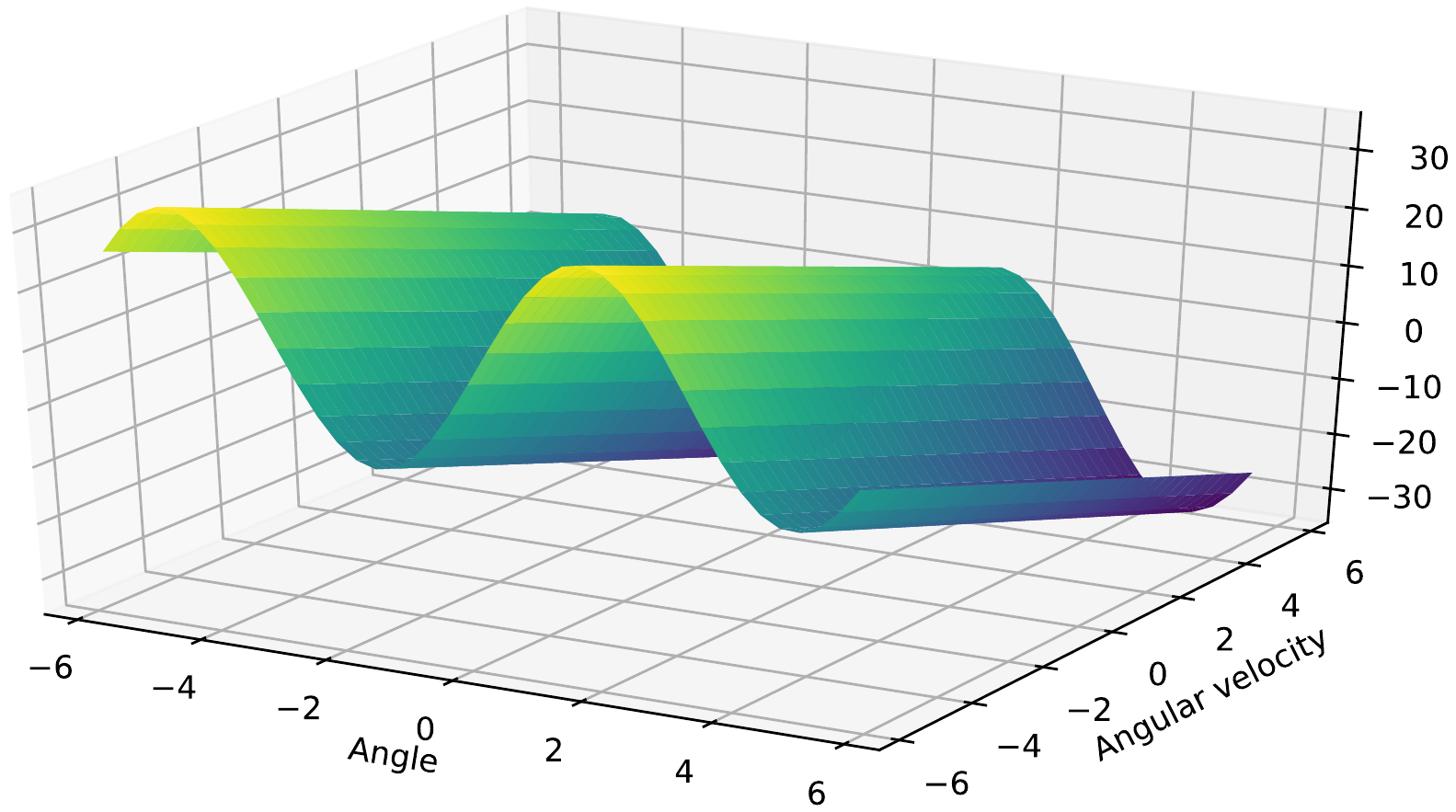}}%
 \end{minipage}}%
	}
	\vspace{-10mm}
\end{figure}

\subsubsection{Verification}
Having the system identified and the feedback control, we can apply the verification algorithm based on MIQP problem. As done in \cite{chen2020learning}, we implemented the learner in CVXpy \cite{diamond2016cvxpy} with MOSEK \cite{mosek2020reference} solver, and the verifier in Gurobi 9.1.2 \cite{gurobi2020reference}.  

We choose $\bar D$ such that $x_1$ and $x_2 \in [-6,6]$. To verify the system, we ran the algorithm and obtained a matrix $\hat{P}$ (who numerical values are given in Appendix \ref{app:numerical}) that characterizes the Lyapunov function as in (\ref{PW_nonmono_lyapunov}).

The largest level set of the associated Lyapunov function in $\bar D$ is pictured in Fig. \ref{PW_fig:ROA} as the the ROA of the closed-loop system. Moreover, we illustrate different trajectories of the controlled system that confirms the verified Lyapunov function by constructing an ROA around the origin. 

\begin{figure}[htbp]
	\floatconts
	{Fig:ROA}
	{\caption{ (a). The obtained ROA of the closed-loop PWA system is illustrated for $x_1$ and $x_2 \in [-6,6]$. The uniform grids denote the modes of the PWA system. Multiple trajectories of the system are shown in a phase portrait where colormap represents the magnitude in the vector field. \\ (b). The comparison results for ROA of the closed-loop system is illustrated for $x_1$ and $x_2 \in [-6,6]$, together with the trajectories of the system. The comparison results for LQR, NN, and SOS are taken from \cite{chang2020neural}. }}
	{\hspace{-5cm}\begin{minipage}{.5\textwidth}
			\centering{
			\subfigure[]{\label{PW_fig:ROA}%
				\includegraphics[width=0.82\linewidth]{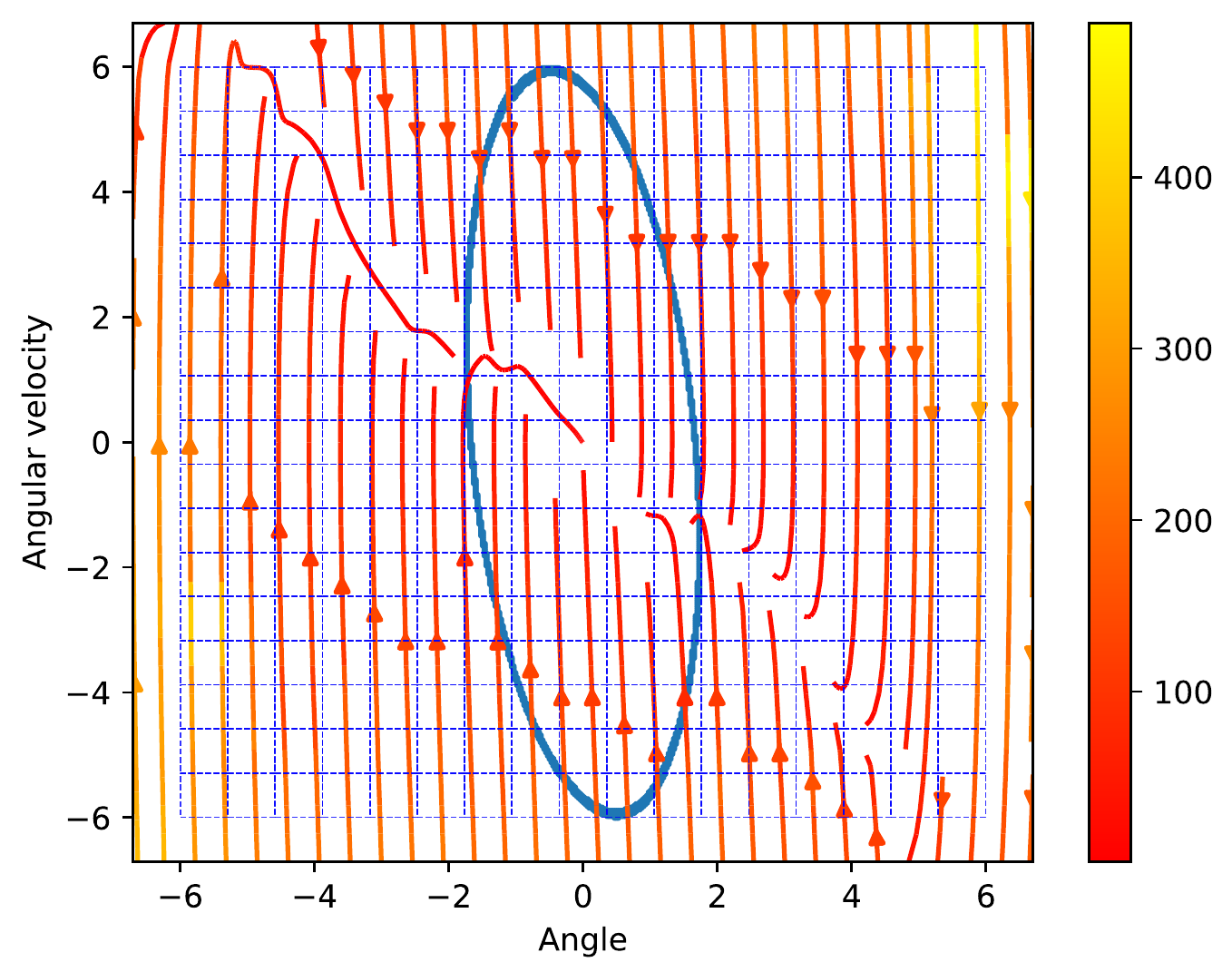}
				}%
			\subfigure[]{\label{PW_fig:ROA_comp}%
				\includegraphics[width=0.85\linewidth]{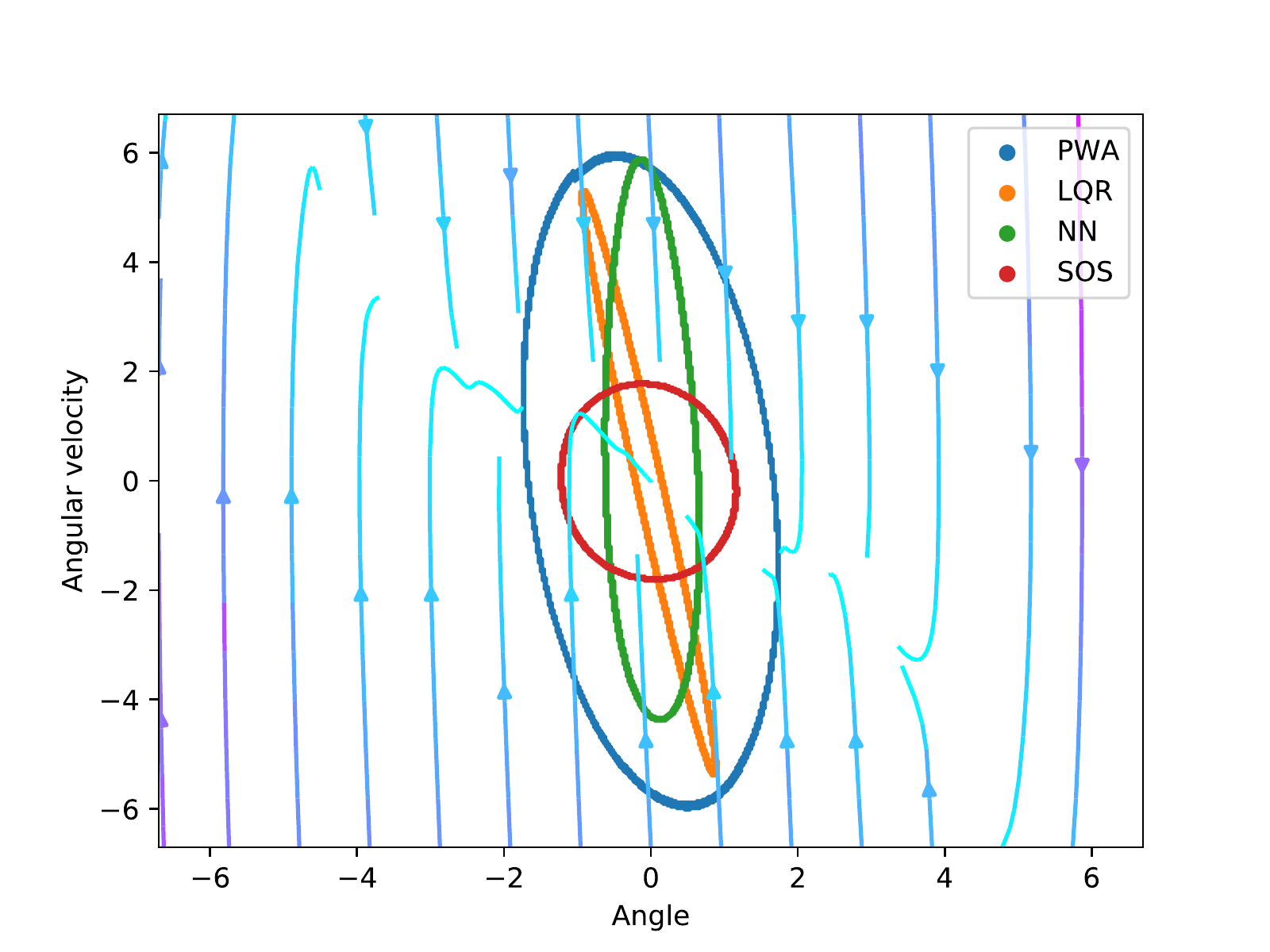}
				}}
				\vspace{-10mm}
		\end{minipage}%
	}
	\vspace{-7mm}
\end{figure}

\subsubsection{Comparison Results}
To highlight the merits of the proposed piecewise learning approach, we compare the ROA obtained by different approaches in the literature. \cite{chang2020neural} proposed a neural network (NN) Lyapunov function for stability verification. According to \cite{chang2020neural}, the comparison done on the pendulum system with linear quadratic regulator (LQR) and sums-of-squares (SOS) showed noticeable superiority of the NN-based Lyapunov approach. Following the comparison results from \cite{chang2020neural}, we compare the ROA obtained by our approach with NN, SOS, and LQR techniques in Fig. \ref{PW_fig:ROA_comp}. Clearly, the ROA obtained by the piecewise controller with the non-monotonic Lyapunov function is considerably larger than the ones obtained by NN, SOS, and LQR algorithms as shown in \cite{chang2020neural}. 

\subsection{Dynamic Vehicle System with Skidding}

We have also implemented the proposed approach in a more complex dynamic vehicle system with skidding, which shows promising results. The results are omitted in the main paper due to the space limit. They can be found in Appendix \ref{app:numerical}.

\section{Conclusion}
For regulating nonlinear systems with uncertain dynamics, a piecewise nonlinear affine framework was proposed in which each piece is responsible for learning and controlling over a partition of the domain locally. Then, in a particular case of the proposed framework, we focused on learning in the form of the well-known {PWA} systems, for which we presented an optimization-based verification approach that takes into account the estimated uncertainty bounds. We used the pendulum system as a benchmark example for the numerical results. Accordingly, an {ROA} resulting from the level set of a learned Lyapunov function is obtained. Furthermore, the comparison with other control approaches in the literature illustrates a considerable improvement in the {ROA} using the proposed framework. As another example, we implemented the presented approach on a dynamical vehicle system with considerably higher number of partitions and dimensions. The results demonstrated that the approach can scale efficiently, hence, can be potentially implemented on more complex real-world problems in real-time. 


\section*{Acknowledgments} 

This work is partially supported by the HUST-WUXI Research Institute through a JITRI-Waterloo joint project, the NSERC Canada Research Chairs (CRC) program, an NSERC Discovery Grant, and an Ontario Early Researcher Award (ERA). 

\bibliography{PW}

\begin{thebibliography}{53}
\providecommand{\natexlab}[1]{#1}
\providecommand{\url}[1]{\texttt{#1}}
\expandafter\ifx\csname urlstyle\endcsname\relax
  \providecommand{\doi}[1]{doi: #1}\else
  \providecommand{\doi}{doi: \begingroup \urlstyle{rm}\Url}\fi

\bibitem[Aeyels and Peuteman(1998)]{aeyels1998new}
Dirk Aeyels and Joan Peuteman.
\newblock A new asymptotic stability criterion for nonlinear time-variant
  differential equations.
\newblock \emph{IEEE Transactions on automatic control}, 43\penalty0
  (7):\penalty0 968--971, 1998.

\bibitem[Ahmadi and Parrilo(2008)]{ahmadi2008non}
Amir~Ali Ahmadi and Pablo~A Parrilo.
\newblock Non-monotonic lyapunov functions for stability of discrete time
  nonlinear and switched systems.
\newblock In \emph{2008 47th IEEE conference on decision and control}, pages
  614--621. IEEE, 2008.

\bibitem[Amaldi et~al.(2016)Amaldi, Coniglio, and Taccari]{amaldi2016discrete}
Edoardo Amaldi, Stefano Coniglio, and Leonardo Taccari.
\newblock Discrete optimization methods to fit piecewise affine models to data
  points.
\newblock \emph{Computers \& Operations Research}, 75:\penalty0 214--230, 2016.

\bibitem[Andrikopoulos et~al.(2013)Andrikopoulos, Nikolakopoulos, Arvanitakis,
  and Manesis]{andrikopoulos2013piecewise}
George Andrikopoulos, George Nikolakopoulos, Ioannis Arvanitakis, and Stamatis
  Manesis.
\newblock Piecewise affine modeling and constrained optimal control for a
  pneumatic artificial muscle.
\newblock \emph{IEEE Transactions on Industrial Electronics}, 61\penalty0
  (2):\penalty0 904--916, 2013.

\bibitem[ApS(2020)]{mosek2020reference}
MOSEK ApS.
\newblock The mosek optimization toolbox for python manual, 2020.

\bibitem[Atkinson and Vaidya(1995)]{atkinson1995cutting}
David~S Atkinson and Pravin~M Vaidya.
\newblock A cutting plane algorithm for convex programming that uses analytic
  centers.
\newblock \emph{Mathematical Programming}, 69\penalty0 (1):\penalty0 1--43,
  1995.

\bibitem[Balakrishnan et~al.(2008)Balakrishnan, Ding, and
  Lewis]{balakrishnan2008issues}
SN~Balakrishnan, Jie Ding, and Frank~L Lewis.
\newblock Issues on stability of adp feedback controllers for dynamical
  systems.
\newblock \emph{IEEE Transactions on Systems, Man, and Cybernetics, Part B
  (Cybernetics)}, 38\penalty0 (4):\penalty0 913--917, 2008.

\bibitem[Baotic(2005)]{baotic2005optimal}
Mato Baotic.
\newblock \emph{Optimal control of piecewise affine systems: A multi-parametric
  approach}.
\newblock PhD thesis, ETH Zurich, 2005.

\bibitem[Berkenkamp et~al.(2016)Berkenkamp, Moriconi, Schoellig, and
  Krause]{berkenkamp2016safe}
Felix Berkenkamp, Riccardo Moriconi, Angela~P Schoellig, and Andreas Krause.
\newblock Safe learning of regions of attraction for uncertain, nonlinear
  systems with gaussian processes.
\newblock In \emph{2016 IEEE 55th Conference on Decision and Control (CDC)},
  pages 4661--4666. IEEE, 2016.

\bibitem[Bobiti and Lazar(2016)]{bobiti2016sampling}
Ruxandra Bobiti and Mircea Lazar.
\newblock A sampling approach to finding lyapunov functions for nonlinear
  discrete-time systems.
\newblock In \emph{2016 European Control Conference (ECC)}, pages 561--566.
  IEEE, 2016.

\bibitem[Borrelli et~al.(2006)Borrelli, Bemporad, Fodor, and
  Hrovat]{borrelli2006mpc}
Francesco Borrelli, Alberto Bemporad, Michael Fodor, and Davor Hrovat.
\newblock An mpc/hybrid system approach to traction control.
\newblock \emph{IEEE Transactions on Control Systems Technology}, 14\penalty0
  (3):\penalty0 541--552, 2006.

\bibitem[Boyd and Vandenberghe(2007)]{boyd2007localization}
Stephen Boyd and Lieven Vandenberghe.
\newblock Localization and cutting-plane methods.
\newblock \emph{From Stanford EE 364b lecture notes}, 2007.

\bibitem[Boyd et~al.(2004)Boyd, Boyd, and Vandenberghe]{boyd2004convex}
Stephen Boyd, Stephen~P Boyd, and Lieven Vandenberghe.
\newblock \emph{Convex optimization}.
\newblock Cambridge university press, 2004.

\bibitem[Breschi et~al.(2016)Breschi, Piga, and Bemporad]{breschi2016piecewise}
Valentina Breschi, Dario Piga, and Alberto Bemporad.
\newblock Piecewise affine regression via recursive multiple least squares and
  multicategory discrimination.
\newblock \emph{Automatica}, 73:\penalty0 155--162, 2016.

\bibitem[Brunton et~al.(2016)Brunton, Proctor, and
  Kutz]{brunton2016discovering}
Steven~L Brunton, Joshua~L Proctor, and J~Nathan Kutz.
\newblock Discovering governing equations from data by sparse identification of
  nonlinear dynamical systems.
\newblock \emph{Proceedings of the national academy of sciences}, 113\penalty0
  (15):\penalty0 3932--3937, 2016.

\bibitem[Chang et~al.(2020)Chang, Roohi, and Gao]{chang2020neural}
Ya-Chien Chang, Nima Roohi, and Sicun Gao.
\newblock Neural lyapunov control.
\newblock \emph{arXiv preprint arXiv:2005.00611}, 2020.

\bibitem[Chen et~al.(2020)Chen, Fazlyab, Morari, Pappas, and
  Preciado]{chen2020learning}
Shaoru Chen, Mahyar Fazlyab, Manfred Morari, George~J. Pappas, and Victor~M.
  Preciado.
\newblock Learning lyapunov functions for piecewise affine systems with neural
  network controllers, 2020.

\bibitem[Christophersen et~al.(2005)Christophersen, Baoti{\'c}, and
  Morari]{christophersen2005optimal}
Frank~J Christophersen, Mato Baoti{\'c}, and Manfred Morari.
\newblock Optimal control of piecewise affine systems: A dynamic programming
  approach.
\newblock In \emph{Control and Observer Design for Nonlinear Finite and
  Infinite Dimensional Systems}, pages 183--198. Springer, 2005.

\bibitem[Dai et~al.(2021)Dai, Landry, Yang, Pavone, and
  Tedrake]{dai2021lyapunov}
Hongkai Dai, Benoit Landry, Lujie Yang, Marco Pavone, and Russ Tedrake.
\newblock Lyapunov-stable neural-network control.
\newblock \emph{arXiv preprint arXiv:2109.14152}, 2021.

\bibitem[Diamond and Boyd(2016)]{diamond2016cvxpy}
Steven Diamond and Stephen Boyd.
\newblock Cvxpy: A python-embedded modeling language for convex optimization.
\newblock \emph{The Journal of Machine Learning Research}, 17\penalty0
  (1):\penalty0 2909--2913, 2016.

\bibitem[Du et~al.(2021)Du, Liu, Qiu, and Buss]{du2021online}
Yingwei Du, Fangzhou Liu, Jianbin Qiu, and Martin Buss.
\newblock Online identification of piecewise affine systems using integral
  concurrent learning.
\newblock \emph{IEEE Transactions on Circuits and Systems I: Regular Papers},
  68\penalty0 (10):\penalty0 4324--4336, 2021.

\bibitem[Elzinga and Moore(1975)]{elzinga1975central}
Jack Elzinga and Thomas~G Moore.
\newblock A central cutting plane algorithm for the convex programming problem.
\newblock \emph{Mathematical Programming}, 8\penalty0 (1):\penalty0 134--145,
  1975.

\bibitem[Farsi and Liu(2020)]{farsi2020structured}
Milad Farsi and Jun Liu.
\newblock Structured online learning-based control of continuous-time nonlinear
  systems.
\newblock \emph{IFAC-PapersOnLine}, 53\penalty0 (2):\penalty0 8142--8149, 2020.

\bibitem[Farsi and Liu(2021)]{farsi2021structured}
Milad Farsi and Jun Liu.
\newblock A structured online learning approach to nonlinear tracking with
  unknown dynamics.
\newblock In \emph{2021 American Control Conference (ACC)}, pages 2205--2211.
  IEEE, 2021.

\bibitem[Farsi and Liu(2022)]{farsi2022Quad}
Milad Farsi and Jun Liu.
\newblock Structured online learning for low-level control of quadrotors.
\newblock In \emph{2022 American Control Conference (ACC)}. IEEE, 2022.

\bibitem[Ferrari-Trecate et~al.(2003)Ferrari-Trecate, Muselli, Liberati, and
  Morari]{ferrari2003clustering}
Giancarlo Ferrari-Trecate, Marco Muselli, Diego Liberati, and Manfred Morari.
\newblock A clustering technique for the identification of piecewise affine
  systems.
\newblock \emph{Automatica}, 39\penalty0 (2):\penalty0 205--217, 2003.

\bibitem[Gambella et~al.(2021)Gambella, Ghaddar, and
  Naoum-Sawaya]{gambella2021optimization}
Claudio Gambella, Bissan Ghaddar, and Joe Naoum-Sawaya.
\newblock Optimization problems for machine learning: A survey.
\newblock \emph{European Journal of Operational Research}, 290\penalty0
  (3):\penalty0 807--828, 2021.

\bibitem[Garulli et~al.(2012)Garulli, Paoletti, and Vicino]{garulli2012survey}
Andrea Garulli, Simone Paoletti, and Antonio Vicino.
\newblock A survey on switched and piecewise affine system identification.
\newblock \emph{IFAC Proceedings Volumes}, 45\penalty0 (16):\penalty0 344--355,
  2012.

\bibitem[Geyer et~al.(2008)Geyer, Papafotiou, and Morari]{geyer2008hybrid}
Tobias Geyer, Georgios Papafotiou, and Manfred Morari.
\newblock Hybrid model predictive control of the step-down dc--dc converter.
\newblock \emph{IEEE Transactions on Control Systems Technology}, 16\penalty0
  (6):\penalty0 1112--1124, 2008.

\bibitem[Goffin and Vial(1993)]{goffin1993computation}
Jean-Louis Goffin and Jean-Philippe Vial.
\newblock On the computation of weighted analytic centers and dual ellipsoids
  with the projective algorithm.
\newblock \emph{Mathematical Programming}, 60\penalty0 (1):\penalty0 81--92,
  1993.

\bibitem[Gurobi(2020)]{gurobi2020reference}
Gurobi~Optimizer Gurobi.
\newblock Reference manual, gurobi optimization, 2020.

\bibitem[Haddad and Chellaboina(2011)]{haddad2011nonlinear}
Wassim~M Haddad and VijaySekhar Chellaboina.
\newblock \emph{Nonlinear dynamical systems and control}.
\newblock Princeton university press, 2011.

\bibitem[Jiang and Wang(2001)]{jiang2001input}
Zhong-Ping Jiang and Yuan Wang.
\newblock Input-to-state stability for discrete-time nonlinear systems.
\newblock \emph{Automatica}, 37\penalty0 (6):\penalty0 857--869, 2001.

\bibitem[Kamalapurkar et~al.(2018)Kamalapurkar, Walters, Rosenfeld, and
  Dixon]{SOL:kamalapurkar2018model}
Rushikesh Kamalapurkar, Patrick Walters, Joel Rosenfeld, and Warren Dixon.
\newblock Model-based reinforcement learning for approximate optimal control.
\newblock In \emph{Reinforcement Learning for Optimal Feedback Control}, pages
  99--148. Springer, 2018.

\bibitem[Lewis and Vrabie(2009)]{lewis2009reinforcement}
Frank~L Lewis and Draguna Vrabie.
\newblock Reinforcement learning and adaptive dynamic programming for feedback
  control.
\newblock \emph{IEEE circuits and systems magazine}, 9\penalty0 (3):\penalty0
  32--50, 2009.

\bibitem[Liu et~al.(2016)Liu, Alfi, and Bruni]{liu2016efficient}
XY~Liu, Stefano Alfi, and Stefano Bruni.
\newblock An efficient recursive least square-based condition monitoring
  approach for a rail vehicle suspension system.
\newblock \emph{Vehicle System Dynamics}, 54\penalty0 (6):\penalty0 814--830,
  2016.

\bibitem[Ljung and S{\"o}derstr{\"o}m(1983)]{ljung1983theory}
Lennart Ljung and Torsten S{\"o}derstr{\"o}m.
\newblock \emph{Theory and practice of recursive identification}.
\newblock MIT press, 1983.

\bibitem[Marcucci and Tedrake(2019)]{marcucci2019mixed}
Tobia Marcucci and Russ Tedrake.
\newblock Mixed-integer formulations for optimal control of piecewise-affine
  systems.
\newblock In \emph{Proceedings of the 22nd ACM International Conference on
  Hybrid Systems: Computation and Control}, pages 230--239, 2019.

\bibitem[Marcucci et~al.(2017)Marcucci, Deits, Gabiccini, Bicchi, and
  Tedrake]{marcucci2017approximate}
Tobia Marcucci, Robin Deits, Marco Gabiccini, Antonio Bicchi, and Russ Tedrake.
\newblock Approximate hybrid model predictive control for multi-contact push
  recovery in complex environments.
\newblock In \emph{2017 IEEE-RAS 17th International Conference on Humanoid
  Robotics (Humanoids)}, pages 31--38. IEEE, 2017.

\bibitem[Nesterov(1995)]{nesterov1995cutting}
Yu~Nesterov.
\newblock Cutting plane algorithms from analytic centers: efficiency estimates.
\newblock \emph{Mathematical Programming}, 69\penalty0 (1):\penalty0 149--176,
  1995.

\bibitem[Pepy et~al.(2006)Pepy, Lambert, and Mounier]{pepy2006path}
Romain Pepy, Alain Lambert, and Hugues Mounier.
\newblock Path planning using a dynamic vehicle model.
\newblock In \emph{2006 2nd International Conference on Information \&
  Communication Technologies}, volume~1, pages 781--786. IEEE, 2006.

\bibitem[Rebennack and Krasko(2020)]{rebennack2020piecewise}
Steffen Rebennack and Vitaliy Krasko.
\newblock Piecewise linear function fitting via mixed-integer linear
  programming.
\newblock \emph{INFORMS Journal on Computing}, 32\penalty0 (2):\penalty0
  507--530, 2020.

\bibitem[Rodrigues and Boyd(2005)]{rodrigues2005piecewise}
Luis Rodrigues and Stephen Boyd.
\newblock Piecewise-affine state feedback for piecewise-affine slab systems
  using convex optimization.
\newblock \emph{Systems \& Control Letters}, 54\penalty0 (9):\penalty0
  835--853, 2005.

\bibitem[Rodrigues and How(2003)]{rodrigues2003observer}
Luis Rodrigues and Jonathan~P How.
\newblock Observer-based control of piecewise-affine systems.
\newblock \emph{International Journal of Control}, 76\penalty0 (5):\penalty0
  459--477, 2003.

\bibitem[Strijbosch et~al.(2020)Strijbosch, Spiegel, Barton, and
  Oomen]{strijbosch2020monotonically}
Nard Strijbosch, Isaac Spiegel, Kira Barton, and Tom Oomen.
\newblock Monotonically convergent iterative learning control for piecewise
  affine systems.
\newblock \emph{IFAC-PapersOnLine}, 53\penalty0 (2):\penalty0 1474--1479, 2020.

\bibitem[Sun et~al.(2002)Sun, Toh, and Zhao]{sun2002analytic}
Jie Sun, Kim-Chuan Toh, and Gongyun Zhao.
\newblock An analytic center cutting plane method for semidefinite feasibility
  problems.
\newblock \emph{Mathematics of Operations Research}, 27\penalty0 (2):\penalty0
  332--346, 2002.

\bibitem[Sun et~al.(2019)Sun, Zhang, Cai, Wang, and Chen]{sun2019hybrid}
Xiaoqiang Sun, Houzhong Zhang, Yingfeng Cai, Shaohua Wang, and Long Chen.
\newblock Hybrid modeling and predictive control of intelligent vehicle
  longitudinal velocity considering nonlinear tire dynamics.
\newblock \emph{Nonlinear Dynamics}, 97\penalty0 (2):\penalty0 1051--1066,
  2019.

\bibitem[Toriello and Vielma(2012)]{toriello2012fitting}
Alejandro Toriello and Juan~Pablo Vielma.
\newblock Fitting piecewise linear continuous functions.
\newblock \emph{European Journal of Operational Research}, 219\penalty0
  (1):\penalty0 86--95, 2012.

\bibitem[Vlad et~al.(2012)Vlad, Rodriguez-Ayerbe, Godoy, and
  Lefranc]{vlad2012explicit}
Cristina Vlad, Pedro Rodriguez-Ayerbe, Emmanuel Godoy, and Pierre Lefranc.
\newblock Explicit model predictive control of buck converter.
\newblock In \emph{2012 15th International Power Electronics and Motion Control
  Conference (EPE/PEMC)}, pages DS1e--4. IEEE, 2012.

\bibitem[Wang et~al.(2009)Wang, Zhang, and Liu]{wang2009adaptive}
Fei-Yue Wang, Huaguang Zhang, and Derong Liu.
\newblock Adaptive dynamic programming: An introduction.
\newblock \emph{IEEE computational intelligence magazine}, 4\penalty0
  (2):\penalty0 39--47, 2009.

\bibitem[Wu et~al.(2015)Wu, Qiu, Burnett, and Guo]{wu2015recursive}
Lifu Wu, Xiaojun Qiu, Ian~S Burnett, and Yecai Guo.
\newblock A recursive least square algorithm for active control of mixed noise.
\newblock \emph{Journal of Sound and Vibration}, 339:\penalty0 1--10, 2015.

\bibitem[Yuan et~al.(2019)Yuan, Tang, Zhou, Pan, Li, Zhang, Ding, and
  Goncalves]{yuan2019data}
Ye~Yuan, Xiuchuan Tang, Wei Zhou, Wei Pan, Xiuting Li, Hai-Tao Zhang, Han Ding,
  and Jorge Goncalves.
\newblock Data driven discovery of cyber physical systems.
\newblock \emph{Nature communications}, 10\penalty0 (1):\penalty0 1--9, 2019.

\bibitem[Zou and Li(2007)]{zou2007robust}
Yuanyuan Zou and Shaoyuan Li.
\newblock Robust model predictive control for piecewise affine systems.
\newblock \emph{Circuits, Systems \& Signal Processing}, 26\penalty0
  (3):\penalty0 393--406, 2007.

\end{thebibliography}


\setcounter{page}{1}

\appendix

\section{Model Identification}

\subsection{Continuity of the Identified Model}
\label{app:continuity}

Considering that differentiable bases are assumed, the model identified is differentiable within the interior of $\Upsilon_\sigma$ for $\sigma \in \{1,2,\dots,n_\sigma\}$. However, the pieces of the model may not meet in the boundaries of $\Upsilon_\sigma$ where $x \in \Upsilon_\sigma \bigcap \Upsilon_l$ for any $\sigma \neq l$ and $\sigma$, $l \in \{1,2,\dots,n_\sigma\}$.

Based on our knowledge of system (\ref{SOL_sys}) from which we collect samples the continuity holds for the original system. Hence, in theory, if many pieces are chosen, and enough samples are collected, the edges of pieces will converge together to yield a continuous model. However, choosing arbitrarily small pieces is not practical.

There exist different techniques to efficiently choose the partitions on $D$ and best fit a continuous piecewise model, see e.g. \cite{toriello2012fitting,breschi2016piecewise,ferrari2003clustering,amaldi2016discrete,rebennack2020piecewise}. Such techniques usually involve global adjustments of the model weights and the partitions for which the computations can be considerably expensive. Therefore, we choose to locally deal with the gaps among the pieces. This can be done by a post-processing routine performed on the identified model.  

A rather straightforward technique is to define extra partitions in the margins of each $\Upsilon_\sigma$ to fill the gaps among pieces. The weights of the corresponding pieces added can be chosen according to the weights of the adjacent pieces that are given by the identification. This is done in a way that helps to connect all the pieces together to make a continuous piecewise model. Fig. \ref{PW_fig:pieces} illustrates the process of constructing extra partitions for a two-dimensional case, where we choose them to be in triangular shapes. A similar approach can be taken for generalizing to the $n$-dimensional case.

\begin{figure}[ht]
	\begin{center}
		\includegraphics[width=7cm]{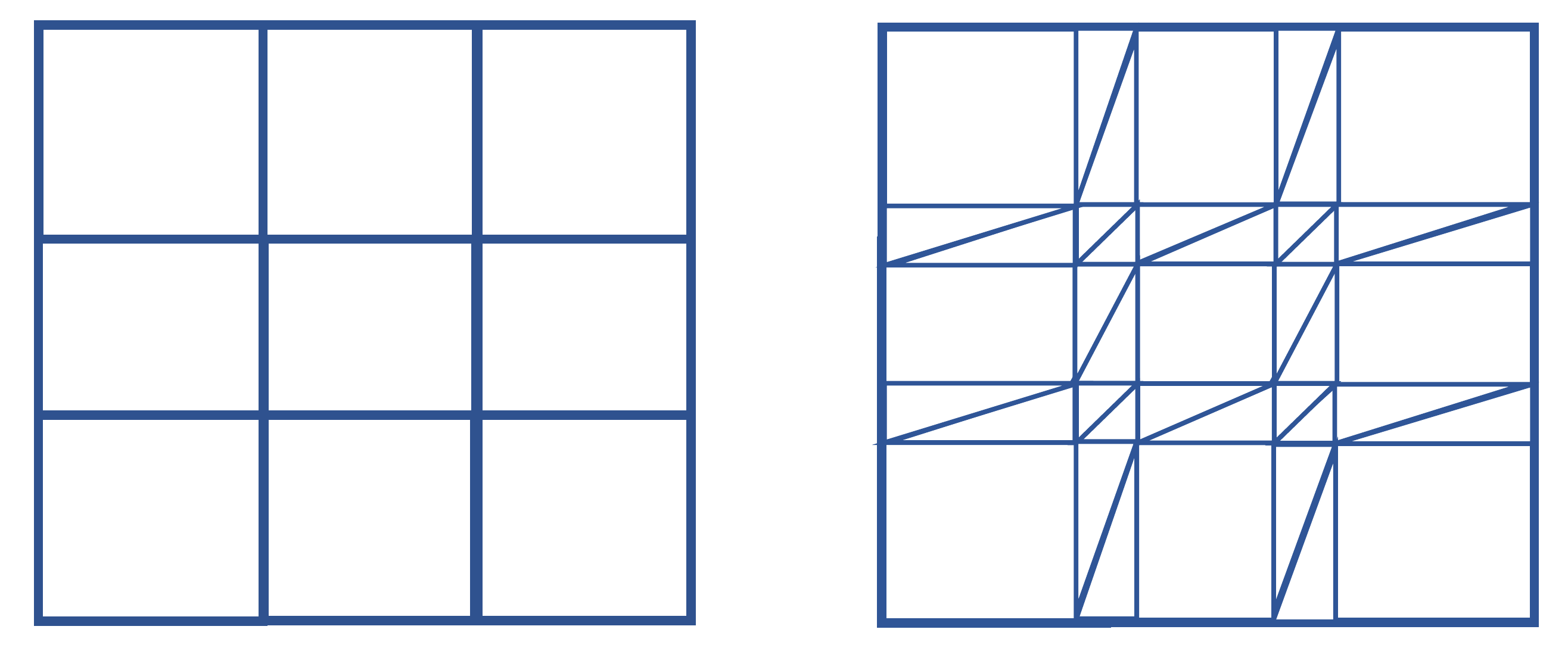}    
		\caption{A scheme for obtaining a continuous piecewise model is illustrated. On the left, partitions on the two-dimensional domain are shown for which the pieces of the model may not be connected in the borders. On the right, some extra triangular pieces are constructed to allow filling the possible gaps in the model.} 
		\label{PW_fig:pieces}
	\end{center}
\end{figure}

\subsection{Database} 

Although an online technique is used to update the piece-wise model along trajectories, we still need to collect a number of samples for each piece of the system. The set of samples recorded will be used later to obtain an estimation of the uncertainty bounds for each mode of the system. For this purpose, we, over time, handpick and save samples that best describe the dynamics in any mode of the piecewise system. 

It should be noted that the database will be processed offline to extract the uncertainty bounds. Hence, it does not affect the online learning procedure and its computational cost. Any sample of the system, to be stored in the database, includes $({\Theta_k}^s,\hat {\dot x}_k)$,  where the state derivative is approximated by $\hat {\dot x}_k =(x_{k}-x_{k-1})/h$ and ${\dot e}_k$. For better results, higher-order approximations of the state derivative can be employed.

Different techniques can be employed to obtain a summary of the samples collected.  We assume a given maximum size of the database $N_d$. Then, for any mode of the piecewise model, we keep adding the samples with larger prediction errors to the database. Therefore, at any step, we compare the prediction error $ {\dot e}_k=\|{\dot x}_k-\hat {\dot x}_k\|$ with the most recent average error $\bar {\dot e}_{k\sigma}$ obtained for the active piece. Hence, if the condition ${\dot e}_k>\eta\bar {\dot e}_{k\sigma}$ holds we add the sample to the database, where the constant $ \eta>0$ adjusts the threshold. If the maximum number of samples in database is reached, we replace the oldest sample with the recent one.

\section{Analysis of Uncertainty Bounds}

\subsection{Proof of Theorem \ref{thm:bound1}}\label{app:thm1}

 \begin{proof}
	According to Assumption \ref{PW_assu_Lip_e}, it is straightforward to show that the prediction error can be bound for any $\sigma$ by using the samples in partition $\sigma$ as 
	\begin{align*}
	|\hat{F}_i(x^s,u^s)-{F}_i(x^s,u^s)|&\leq |\hat{F}_i(x^s,u^s)-\tilde{F}_i(x^s,u^s)|+|\tilde{F}_i(x^s,u^s)-{F}_i(x^s,u^s)| \\
	&\leq|\hat{F}_i(x^s,u^s)-\tilde{F}_i(x^s,u^s)|+\varrho_e |\tilde{F}_i(x^s,u^s)|\\
	&\leq \underset{s \in \Upsilon_\sigma}{\max}(|\hat{F}_i(x^s,u^s)-\tilde{F}_i(x^s,u^s)|+\varrho_e |\tilde{F}_i(x^s,u^s)| )\\
	&=\bar d_{e\sigma i}.
	\end{align*} 
\end{proof}

\subsection{Quadratic Programs for Bounding Errors}\label{app:qperror}

\begin{assumption}\label{PW_assu_Lip_x}
	For system (\ref{SOL_sys}),  $\exists\varrho_x\in\mathbb{R}^{n}_+$ such that we have
	\begin{align*}
	{| F_i(x_0,u)-F_i(y_0,u)|} \leq \varrho_{xi} {\lVert x_0-y_0\rVert},
	\end{align*} 
	for any $x_0,y_0 \in D$, and $u\in\Omega$, where $i \in \{1,\dots,n\}$.
\end{assumption}

\begin{assumption}\label{PW_assu_Lip_u}
	For system (\ref{SOL_sys}), $\exists\varrho_u\in\mathbb{R}^{n}_+$ such that we have
	\begin{align*}
	{| F_i(x,u_0)-F_i(x,w_0)|} \leq \varrho_{ui} {\lVert u_0-w_0\rVert},
	\end{align*} 
	for any $x \in D$, and $u_0,w_0$ $\in\Omega$, where $i \in \{1,\dots,n\}$.
\end{assumption}

\begin{assumption}\label{PW_assu_Lip_estim}
	An initial estimation of $\varrho_e$ and Lipschitz constants $\varrho_{xi}$ and $\varrho_{ui}$ is known. 
\end{assumption}
The following results and the bounds will directly depend on the choice of $\varrho_x$, and $\varrho_u$. However, this is the least we can assume that allows us to carry out the computations. Moreover, making such assumptions is not restrictive in practice since we often have a general knowledge of the application. Moreover, the learning may be first started with an initial guess of the continuity constants. Later, if the samples collected override the assumption made, we can update these values.  

To calculate the uncertainty bound for any piece, we first look for the largest gap existing among the samples within each piece. Fig. \ref{fig:subfigex} illustrates an example of how this gap is affected by the number of samples in a particular mode. To obtain the radius, we solve a quadratic programming (QP) problem for each piece. The solution to the following QP returns the centre $c_{x\sigma}^*$ at which an $n$-dimensional ball of the largest radius  $r_{x\sigma}^*$ can be found in the $\sigma$th piece such that no samples $x^s$ are contained in this ball: 
\begin{align}\label{PW_gap_center_x}
&\underset{c_{x\sigma},r_{x\sigma}}{\arg\max} \qquad \quad r_{x\sigma}\\ \nonumber
&\text{subject to} \qquad{c_{x\sigma}\in \Upsilon_\sigma}\\ \nonumber
&\hspace{2.2cm}\text{for} \quad s\in S_{\Upsilon\sigma}: \quad \lVert x^s-c_{x\sigma}\rVert\geq r_{x\sigma}
\end{align}
Similarly, we can obtain the centre $c_{u\sigma}^*$ and radius $r_{u\sigma}^*$ to represent the sample gap as an $m$-dimensional ball in the control space by solving 
\begin{align}\label{PW_gap_center_u}
&\underset{c_{u\sigma},r_{u\sigma}}{\arg\max} \qquad \quad r_{u\sigma}\\ \nonumber
&\text{subject to} \qquad c_{u\sigma}\in \Omega\\ \nonumber
&\hspace{2.2cm}\text{for} \quad s\in S_{\Upsilon\sigma}: \quad \lVert u^s-c_{u\sigma}\rVert\geq r_{u\sigma}.
\end{align}

\begin{figure}[htbp]
	\floatconts
	{fig:subfigex}
	{\caption{Sub-figures (a)-(e) denote the sample gaps located for different numbers of the samples. It is observed that the radius of the gap decreases by increasing the number of samples.}}
	{%
		\subfigure[]{\label{fig:circle0}%
			\includegraphics[width=0.15\linewidth]{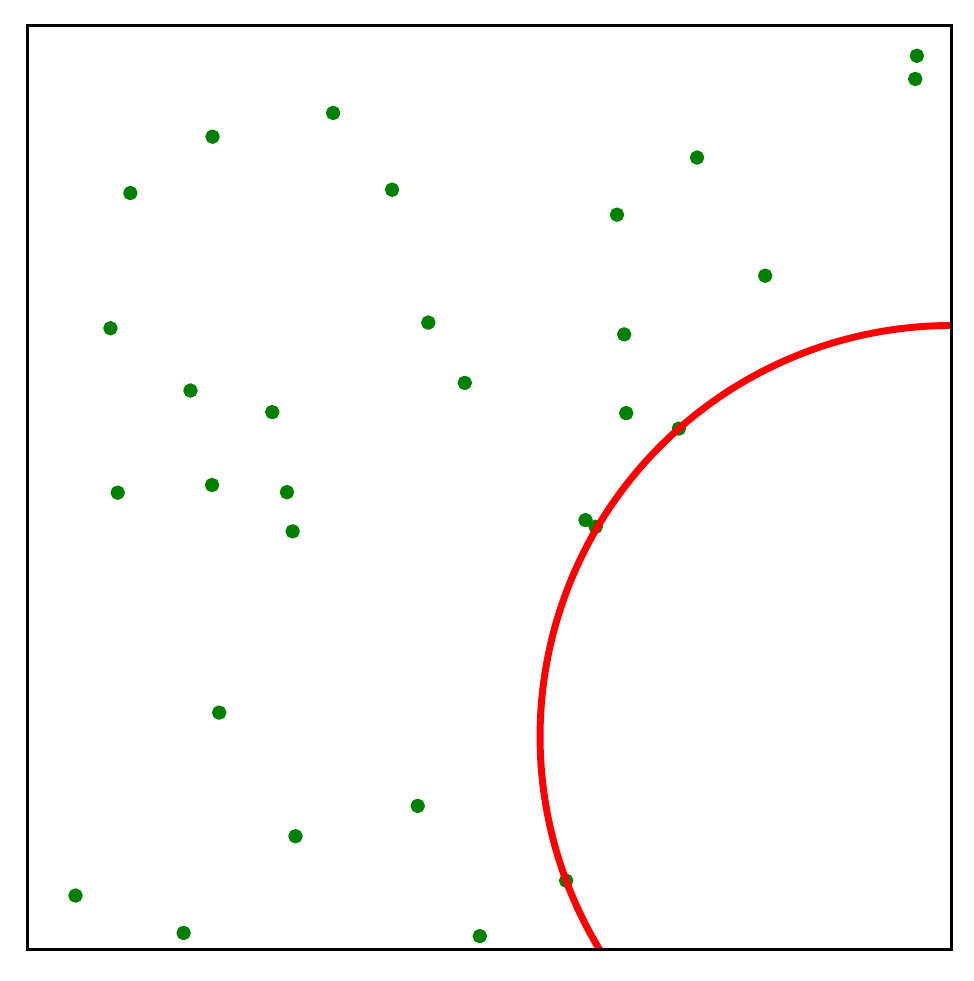}}%
		\quad
		\subfigure[]{\label{fig:square1}%
			\includegraphics[width=0.15\linewidth]{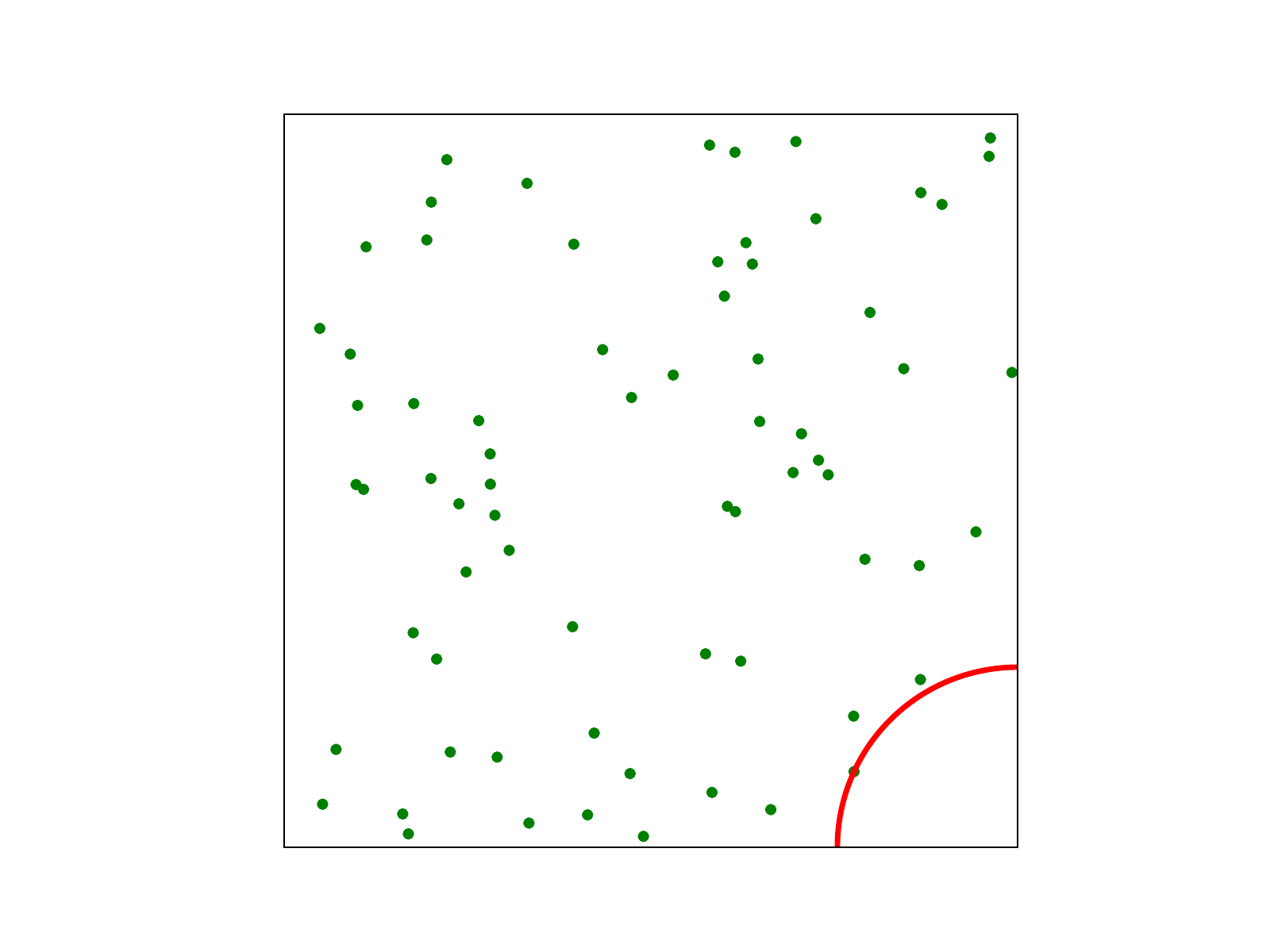}}
		\quad
		\subfigure[]{\label{fig:square2}%
			\includegraphics[width=0.15\linewidth]{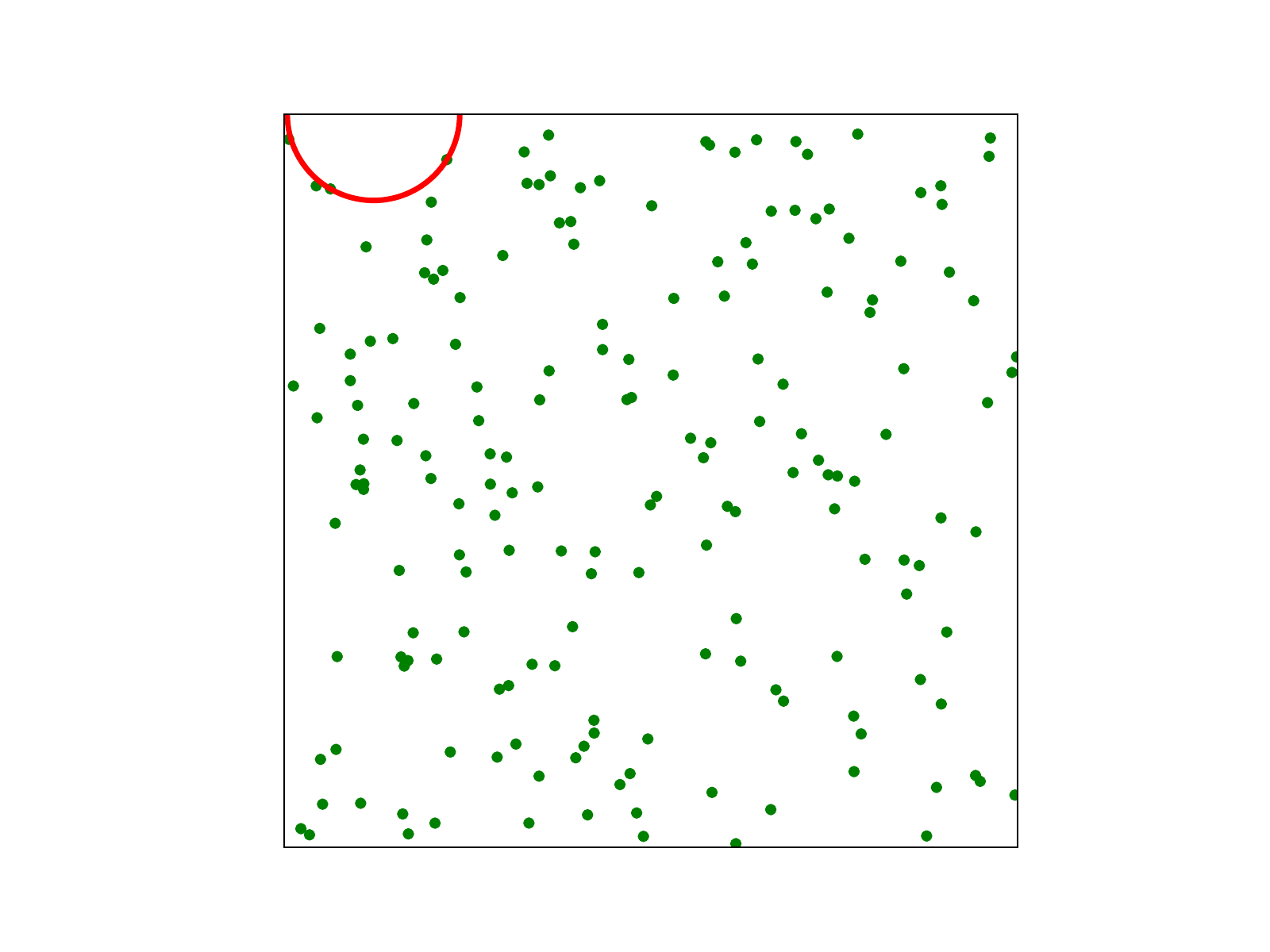}}
		\quad
		\subfigure[]{\label{fig:circle3}%
			\includegraphics[width=0.15\linewidth]{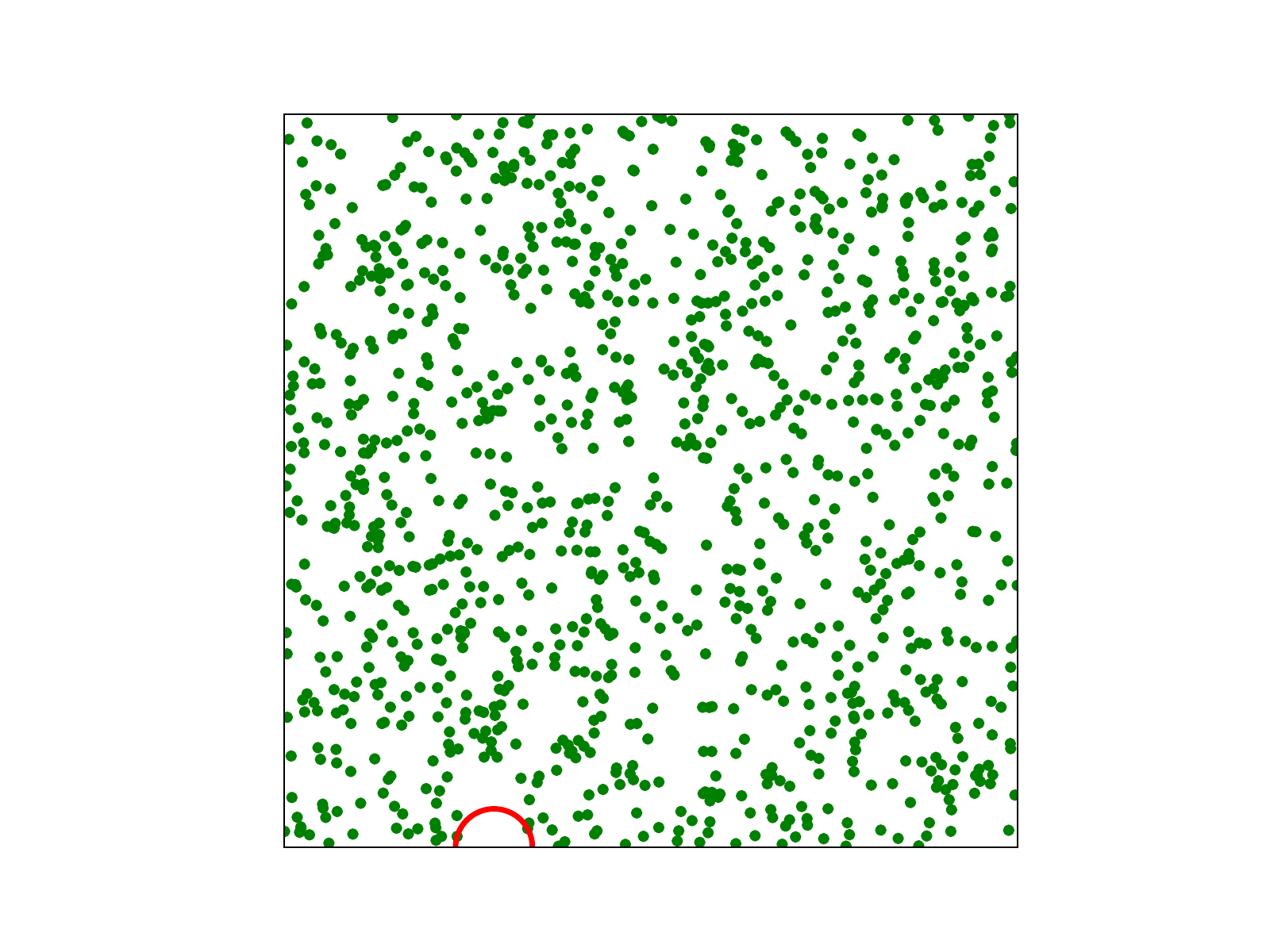}}%
		\quad
		\subfigure[]{\label{fig:square4}%
			\includegraphics[width=0.15\linewidth]{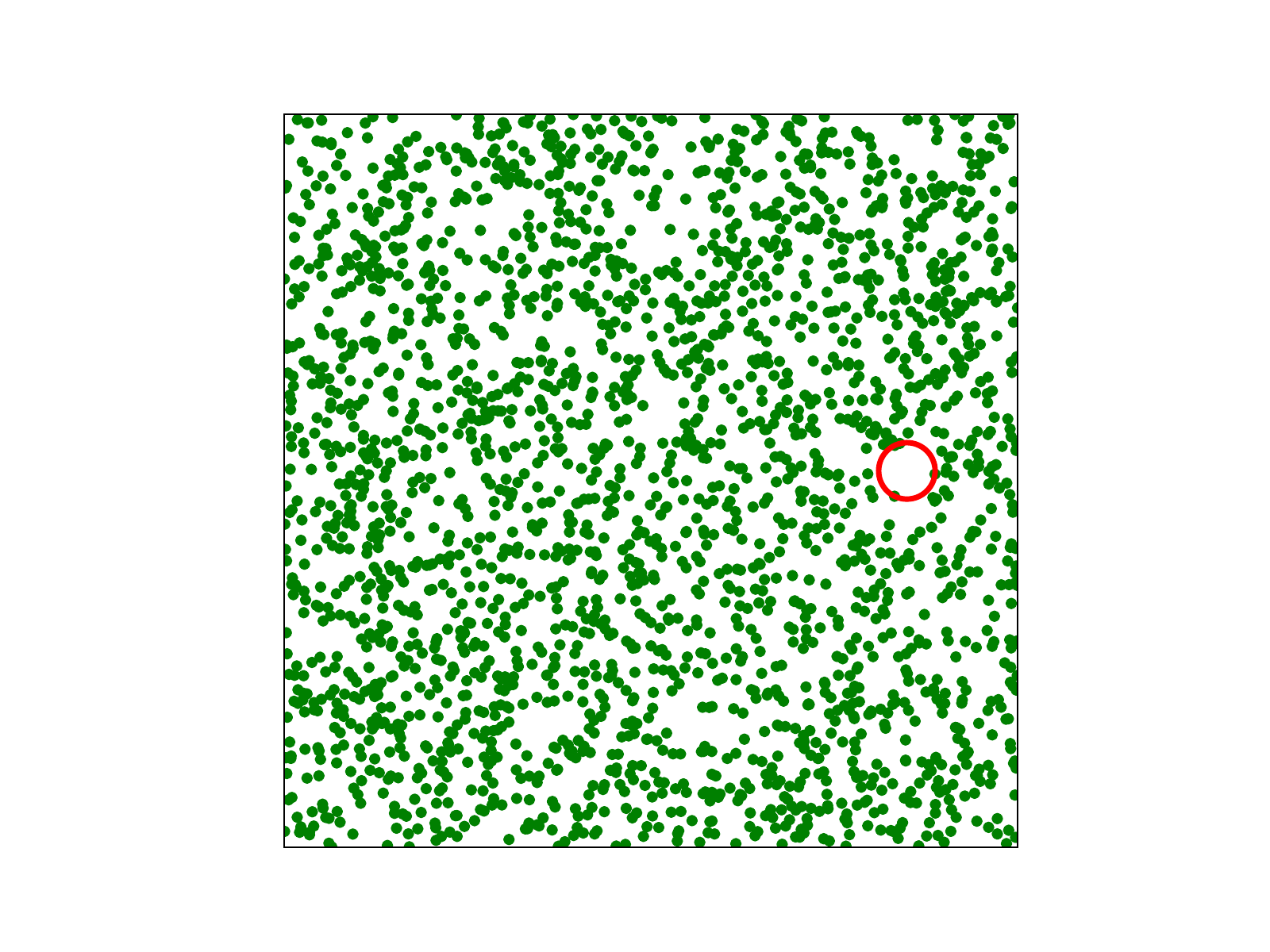}}

	}
\end{figure}

\begin{figure}
\begin{tikzpicture}
\begin{axis}[
xmin=-10, xmax=48,
ymin=-300, ymax=350,
axis lines = left,
xlabel = \(x\),
ylabel = {\(\)},
ticks=none
]
\addplot [
domain=-5:28, 
samples=100, 
color=red,
]
{0.3*x^2 - 2*x - 1};
\addlegendentry{\(F_i\)}

\addplot [
domain=0:32, 
samples=100, 
color=black,
dashed,forget plot,
name path=f0
]
{15*x-170};

\addplot [
domain=0:32, 
samples=100, 
color=black,
dashed,
forget plot,
name path=f1
]
{-15*x+218};

\path[name path=axis] (axis cs:0,0) -- (axis cs:1,0);

\addplot [
thick,
color=blue,
fill=blue, 
fill opacity=0.05,
forget plot
]
fill between[
of=f0 and f1,
soft clip={domain=13:28},
];

\addplot [
domain=-5:35, 
samples=100, 
color=blue,
]
{1*x +0};
\addlegendentry{\(\hat F_i\)}

\addplot [
domain=-5:35, 
samples=100, 
color=blue,dotted,
forget plot,
name path=l0
]
{1*x + -230};

\addplot [
domain=-5:35, 
samples=100, 
color=blue,dotted,
forget plot,
name path=l1
]
{1*x + 230};

\addplot [
thick,
color=yellow,
fill=yellow, 
fill opacity=0.05,
]
fill between[
of=l0 and l1,
soft clip={domain=-5:35},
];
\addlegendentry{\({\bar{d}}_{\sigma  i}\)}

\addplot[only marks,samples at={-3, -2, 1.5, 3, 6}]{0.3*x^2 - 2*x - 1};
\addplot[only marks,nodes near coords={$x^{s*}$},every node near coord/.style={text=black, anchor=east},mark=*,samples at={13}]{0.3*x^2 - 2*x - 1};
\addplot[only marks,nodes near coords={$c^*_{x\sigma}$},mark=square,samples at={28}]{0.3*x^2 - 2*x - 1};
\end{axis}
\end{tikzpicture}
\centering
\caption{The scheme for obtaining uncertainty bound according to the sample gap. Black dots denote the measurements. The dashed lines are plotted according to the Lipschitz continuity properties.}
\label{PW_uncerainty_schme}
\end{figure}
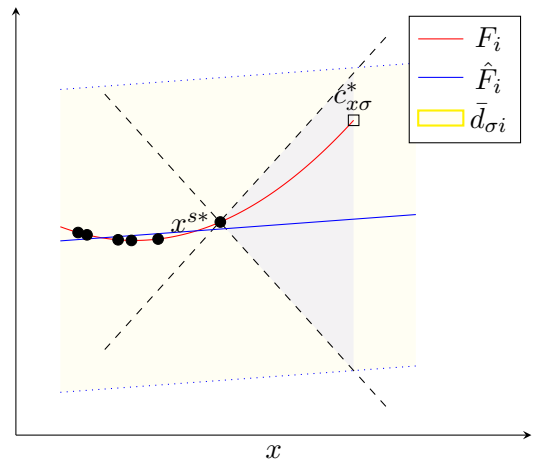

\subsection{Proof of Theorem \ref{thm:bound2}}\label{app:thm2}

\begin{proof}
	According to the Lipschitz condition, the following holds for any $(x,u) \in \Upsilon_\sigma$
	\begin{align}\label{PW_lip_ineq_}
	|{F}_i(x,u)-&{F}_i(x^{s*},u^{s*})|\nonumber\\
	&\leq  |{F}_i(x,u)-{F}_i(x,u^{s*})|+|{F}_i(x,u^{s*})-{F}_i(x^{s*},u^{s*})|\nonumber\\
	&\leq \varrho_{ui} \lVert u-u^{s*} \rVert + \varrho_{xi} \lVert x-x^{s*} \rVert.
	\end{align} 
	Moreover, we have the estimation $\hat F(x,u)$ of the system. Then, the difference is bounded by
	\begin{align*}
	| F_i(x,u)-\hat F_i(x,u)| &\leq |F_i(x,u)-{F}_i(x^{s*},u^{s*})|+|{F}_i(x^{s*},u^{s*})-\hat F_i(x,u)|,\\
	&\leq |F_i(x,u)-{F}_i(x^{s*},u^{s*})|+|{F}_i(x^{s*},u^{s*})-\hat{F}_i(x^{s*},u^{s*})|,\\
	&\quad+|\hat{F}_i(x^{s*},u^{s*})-\hat F_i(x,u)|,\\
	&\leq \varrho_{ui} \lVert u-u^{s*} \rVert + \varrho_{xi} \lVert x-x^{s*} \rVert+ \bar d_{e\sigma i}+|\hat{F}_i(x^{s*},u^{s*})-\hat F_i(x,u)|,\\
	&\leq \varrho_{ui} \lVert u-u^{s*} \rVert + \varrho_{xi} \lVert x-x^{s*} \rVert+ \bar d_{e\sigma i}+|\hat{F}_i(x^{s*},u^{s*})-\hat{F}_i(x^{s*},u)|,\\
	&\quad +|\hat{F}_i(x^{s},u)-\hat F_i(x,u)|,\\
	&\leq \varrho_{ui} \lVert u-u^{s*} \rVert + \varrho_{xi} \lVert x-x^{s*} \rVert+ \bar d_{e\sigma i}+\hat \varrho_{ui} \lVert u-u^{s*} \rVert+\hat \varrho_{xi} \lVert x-x^{s*} \rVert ,
	\end{align*}
	where we used inequality (\ref{PW_lip_ineq_}) and the bound obtained in Theorem \ref{thm:bound1} according to the samples.
	Moreover, considering that the identified model $\hat{F}_i(x,u)$ is known, we can easily compute the corresponding Lipschitz constants $\hat \varrho_{ui}$ and $\hat \varrho_{xi}$. The largest distance with the closest sample $(x^{s*},u^{s*})$ happens in the sample gap given with the radius $r_{x\sigma}^*$, and $r_{u\sigma}^*$. This yields the total bound of the error as
	\begin{align*}
	| F_i(x,u)-\hat F_i(x,u)|\leq \varrho_{ui} r_{u\sigma}^* + \varrho_{xi} r_{x\sigma}^*+ \bar d_{e\sigma i}+\hat \varrho_{ui} r_{u\sigma}^*+\hat \varrho_{xi} r_{x\sigma}^* 
	\end{align*}
\end{proof}

\section{Verification}

\subsection{Searching for a Lyapunov Function } \label{PW_Learning_lyapunov}
  We summarize an altered version of the technique for obtaining a Lyapunov function that is first presented in \cite{chen2020learning} for a deterministic closed-loop system with the neural network controller. Hence, we modify the algorithm to allow the uncertainty together with the feedback control (\ref{PW_PWA_control}).
  
  The procedure includes two stages that are performed iteratively until a Lyapunov function is obtained and verified, or it is concluded that there exists no Lyapunov function in the given set of candidates. 
  
  In the first stage, we assume an initial set of Lyapunov candidates in the form of (\ref{PW_nonmono_lyapunov}). Then, the learner searches for a subset for which the negativity of the Lyapunov difference can be guaranteed with respect to a set of samples collected from the system. If such a subset exists, one element in this subset is proposed as the Lyapunov candidate by the learner.
  
  In the second stage, the proposed Lyapunov candidate is verified on the original system. Noting that the learner only uses a finite number of samples for suggesting a Lyapunov candidate, it may not be valid for all the evolutions of the uncertain system. Accordingly, the verifier either certifies the Lyapunov candidate, or finds a point as the counter-example for which the Lyapunov candidate fails. This sample is added to the set of samples collected from the system. Then, we again proceed to the learner stage with the updated set of samples.
  
  The algorithm is run in a loop, where we start with an empty set of samples in the learner. Then, we continue with proposing a Lyapunov candidate, and adding one counter-example in each iteration of the loop. While growing the set of samples, the set of Lyapunov candidates shrinks in every iteration until one is either validated, or no element is left in the set meaning that no such Lyapunov exists.

\subsection{Convergence of ACCPM}

The convergence and complexity of the ACCPM for searching a quadratic Lyapunov function is discussed in \cite{sun2002analytic,chen2020learning}, where an upper bound is obtained for the number of steps taken until the algorithm exits. 

\begin{lemma}
	Let $\mathscr{F}$ be a convex subset of $\mathbb{R}^{n\times n}$. Moreover, there exists $P_{center} \in \mathbb{R}^{n\times n}$ such that $\{P \in \mathbb{R}^{n\times n}| \lVert P-P_{center}\rVert_F \leq \epsilon\}\subset \mathscr{F}$, where Frobenius norm is used, and $\mathscr{F} \subset \{P \in \mathbb{R}^{n\times n}|0\leq P \leq I\}$. Then, the center cutting-plane algorithm concludes in at most $O(n^3/\epsilon^2)$  steps.
\end{lemma}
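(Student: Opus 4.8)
The plan is to follow the standard volumetric/analytic-center cutting-plane argument, adapted to the matrix setting by identifying $\mathbb{R}^{n\times n}$ with $\mathbb{R}^{n^2}$ equipped with the Frobenius inner product, so that the ambient dimension is $N = n^2$ and the claimed bound $O(n^3/\epsilon^2)$ reads $O(N^{3/2}/\epsilon^2)$ in the vectorized coordinates — the familiar ACCPM complexity. First I would set up the localization set: at iteration $i$ the learner maintains a polytope (intersection of the box $0\le P\le I$ with the half-spaces generated by the cuts returned so far) that is guaranteed to contain the target set $\mathscr{F}$. The hypothesis gives us a Frobenius ball of radius $\epsilon$ inside $\mathscr{F}$, hence inside every localization set, which provides the lower bound on volume that drives the argument; the containment $\mathscr{F}\subset\{0\le P\le I\}$ gives an a priori upper bound on the volume of the initial set (it is contained in a Frobenius ball of radius $\sqrt{n}$, so volume at most that of a ball of radius $\sqrt{n}$ in $\mathbb{R}^{n^2}$).

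Next I would invoke the key geometric fact underlying ACCPM: when a central (analytic-center) cut is added, the volume of the localization set — or more precisely a suitable potential/proxy for it, such as $\exp$ of the analytic-center value of the log-barrier — shrinks by a constant factor bounded away from $1$ at each step, with the constant depending on the dimension. Concretely, each cut through the analytic center reduces the relevant volume measure by a factor of roughly $(1 - c/\sqrt{N})$ for a universal constant $c$, so after $k$ steps the volume is at most $V_0(1-c/\sqrt N)^k$. I would then combine the two bounds: the process cannot continue once the localization volume drops below the volume of the guaranteed inscribed $\epsilon$-ball, which is $\omega_N\epsilon^N$ with $\omega_N$ the volume of the unit ball in $\mathbb{R}^N$. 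Setting $V_0(1-c/\sqrt N)^k < \omega_N\epsilon^N$ and solving for $k$ gives $k = O\!\left(\sqrt{N}\,\log(\sqrt{N}/\epsilon)\right)$ in the naive volumetric form; the sharper $O(N^{3/2}/\epsilon^2)$ bound of Atkinson–Vaidya / Goffin–Vial comes from the refined analysis where the per-step progress and the accounting of "deep" versus "shallow" cuts is done more carefully, and I would cite \cite{sun2002analytic,chen2020learning,goffin1993computation,nesterov1995cutting} for that refinement rather than re-deriving it. Substituting $N=n^2$ yields the stated $O(n^3/\epsilon^2)$.

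The main obstacle I anticipate is not the high-level volumetric skeleton — that is textbook — but verifying that the matrix-variable problem genuinely fits the hypotheses of the scalar ACCPM complexity theorem: namely that the feasible set queried by the learner is convex (clear, since $0\le\hat P\le I$ is a spectrahedron and each $\Delta V(x,\hat P)\le 0$ is linear in $\hat P$ for fixed sample $x$), that the self-concordant barrier used in \eqref{PW_ACCPM} has the right barrier parameter (the $-\log\det$ terms contribute parameter $n$ each, which must be folded into the constants), and that the cuts returned by the verifier are valid separating hyperplanes for $\mathscr{F}$ — which requires the counter-example $x$ to certify $\Delta V(x,\hat P^{(i)})>0$ while every point of $\mathscr{F}$ satisfies $\Delta V(x,\hat P)\le 0$, so the cut $\{\hat P : \Delta V(x,\hat P)\le 0\}$ does not exclude $\mathscr{F}$. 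Once these bookkeeping points are checked, the bound follows by direct appeal to the cited convergence results, and I would keep the write-up short by stating that the argument is a specialization of the analysis in \cite{sun2002analytic,chen2020learning} to the present barrier and cut oracle.
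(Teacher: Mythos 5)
The operative step of your proposal---``the bound follows by direct appeal to the cited convergence results''---is exactly what the paper does: its proof of this lemma is a one-line citation of \cite{sun2002analytic,chen2020learning}, and the hypotheses you propose to check before invoking those results (convexity of $\mathscr{F}$, linearity of each sample cut in $\hat P$, validity of the verifier's counter-examples as separating cuts that do not exclude $\mathscr{F}$) are the right bookkeeping. So in its end point your write-up agrees with the paper.

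The mechanism you sketch in between, however, contains steps that would fail if you actually carried them out. First, the dimension accounting: the standard ACCPM complexity for a vector variable in $\mathbb{R}^N$ (Goffin--Luo--Ye) is $O(N^2/\epsilon^2)$ up to logarithmic factors, so naive vectorization with $N=n^2$ yields $O(n^4/\epsilon^2)$, not $O(n^3/\epsilon^2)$; the $n^3$ rate is specific to the semidefinite-feasibility analysis of \cite{sun2002analytic}, which exploits the matrix cuts and the $\log\det$ barrier rather than treating $\hat P$ as a generic $n^2$-dimensional vector. Reading $O(n^3/\epsilon^2)$ as ``$O(N^{3/2}/\epsilon^2)$, the familiar ACCPM complexity'' is a coincidence of exponents, not a derivation, and the attribution of such a rate to Atkinson--Vaidya or Goffin--Vial is not accurate. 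Second, the volumetric skeleton is the wrong engine for ACCPM: a cut through the \emph{analytic} center carries no guaranteed per-step volume reduction factor (that property belongs to centroid cuts via Gr\"unbaum's theorem, or to the ellipsoid method), and the actual ACCPM analysis is a potential-function argument on the logarithmic barrier---which is precisely why its complexity is polynomial in $1/\epsilon$ rather than logarithmic. Your own sketch signals the problem: the ``naive volumetric form'' you obtain, $O(\sqrt{N}\log(\sqrt{N}/\epsilon))$, is far \emph{stronger} than the bound being proved, so presenting $O(N^{3/2}/\epsilon^2)$ as the ``sharper'' refinement is backwards; it is the intermediate volume-reduction claim that is unsound. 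A self-contained proof would have to reproduce the potential-function analysis of \cite{sun2002analytic}; short of that, the defensible proof is the paper's: verify the hypotheses you listed and cite the reference, omitting the volumetric narrative.
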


\begin{proof}
	See \cite{sun2002analytic,chen2020learning} for the proof. 
\end{proof}

\subsection{Stability Analysis (Proof of Theorem \ref{thm:stability})} \label{app:stability}

\begin{proof}
According to the conditions of the verifier, if the 
optimal value returned by the MIQP (\ref{PW_verify1}) is negative, we have effectively verified the following Lyapunov conditions: 
	\begin{align}
	&V(0)=0, \quad V(x)> 0 ,\quad \forall x \in \bar D\backslash \{0\}, \\
	&V(\check{F}_{d,cl} (x))-V(x)<0, \quad \forall x \in \bar D \backslash B_{\epsilon}, d \in \Delta_\sigma,
	\end{align}
for the uncertain closed-loop system (\ref{PW_PW_dis_lin_cl}). By standard Lyapunov analysis for set stability  \cite{haddad2011nonlinear,jiang2001input}, the set  $B_{\epsilon}$, which is the ball of radius $\epsilon$ in infinity norm around the origin, is asymptotically stable for system (\ref{PW_PW_dis_lin_cl}). Furthermore, any sub-level set of $V(x)$, i.e., $\{x\in\mathbb{R}^n\,|\,V(x)\le c\}$ for some $c$, contained in $\bar D$ is contained in the ROA of $B_{\epsilon}$. 
\end{proof}

\begin{remark} \label{PW_remark_local_model_limitation} Due to the existence of a non-zero additive uncertainty bound, one cannot expect convergence to the origin precisely. This issue is addressed by providing convergence guarantee to a small neighborhood of the origin, i.e., $B_\epsilon$. By collecting enough samples around the origin, a local approximation of the system is obtained by the mode $\sigma=0$ of the identified system, whose domain includes the origin, while $d_\sigma$ can be made arbitrarily small as $x_k \rightarrow 0$. By doing so, we can make $\epsilon$ in Theorem \ref{thm:stability} arbitrarily small and the stability result is practically equivalent to the asymptotic stability of the origin. Alternatively, one can assume that there exists a local stabilizing controller that one can switch to when entering a small neighborhood of the origin. In this case, asymptotic stability can be achieved. 
\end{remark}

\section{Numerical Results}\label{app:numerical}

It should be noted, the learning is started from the mode containing the origin in its domain, that we label by $\sigma=0$. As we collect more random samples in $\Upsilon_0$, we can effectively decrease the uncertainty of the model around the origin, and obtain a local controller. Then, we gradually expand the areas sampled to train the rest of the pieces in the PWA model.

Fig. \ref{Fig:uncertainty} depicts the uncertainty bound obtained over the ROI, where a decreasing magnitude through different stages of learning, i.e. subfigures ($a$) to ($f$), is evident. Fig. \ref{fig:samples} illustrates a map of the pieces on $\bar D$ together with the samples and the sample gaps for each piece as discussed in Appendix \ref{app:qperror}.  

\begin{remark}
	It is worth mentioning that the model obtained and the uncertainty bounds can be further improved by continuing the sampling. In this implementation, we perform sampling only until the uncertainty bound obtained allows us to verify a decreasing value function for each piece of the PWA system.
\end{remark}

\begin{figure}[htbp] 
	\floatconts
	{Fig:uncertainty}
	{\caption{To better illustrate the learning procedure, the step-by-step results of the uncertainty bound corresponding to the results in Fig. \ref{Fig:learning_model} are provided. It is evident that error bound is improved in every step.}}
	{%
		\hspace{13mm}\subfigure[]{\label{Fig:uncertainty0}%
			\includegraphics[width=0.2\linewidth]{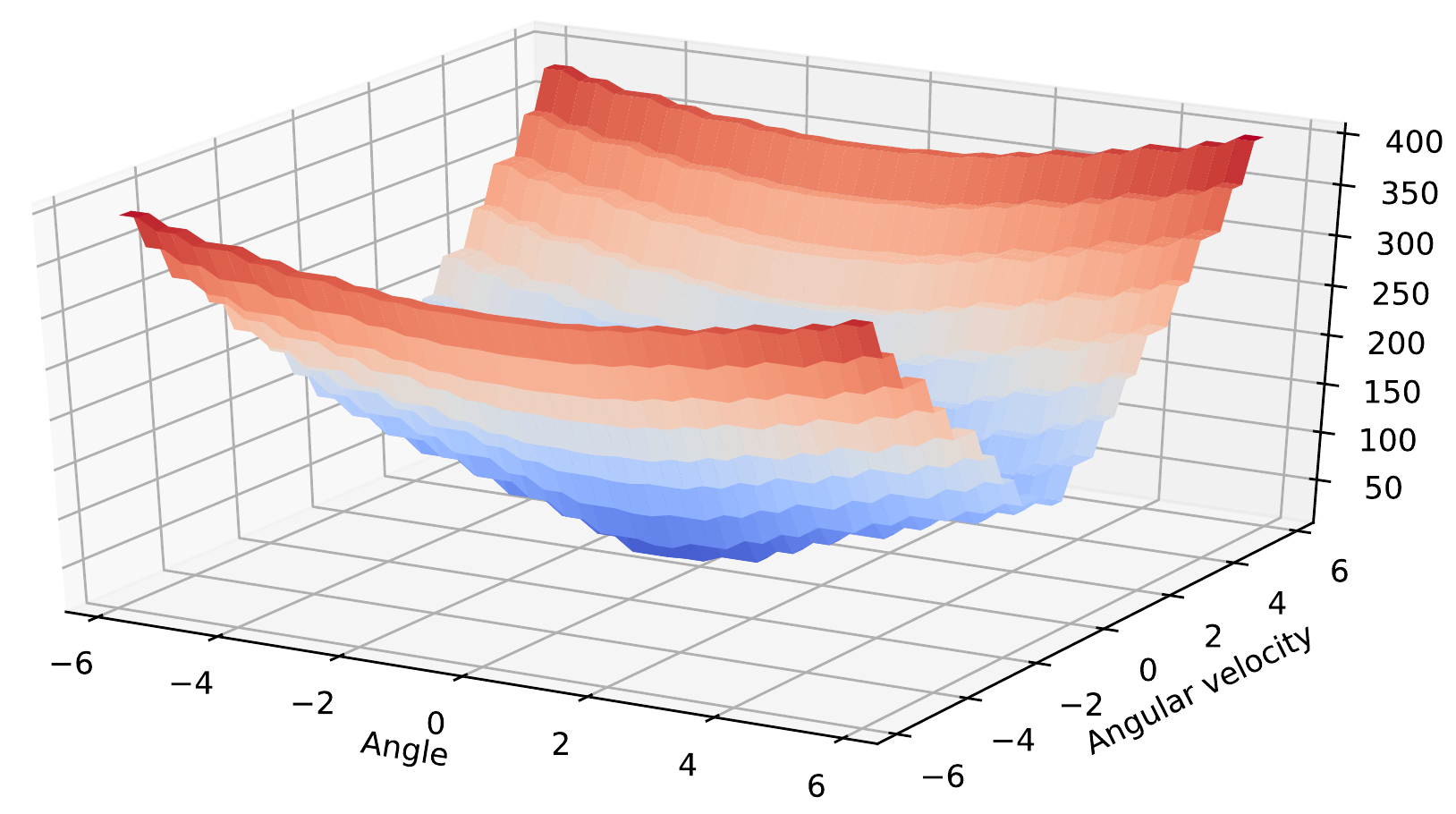}}%
		\quad
		\subfigure[]{\label{Fig:uncertainty1}%
			\includegraphics[width=0.2\linewidth]{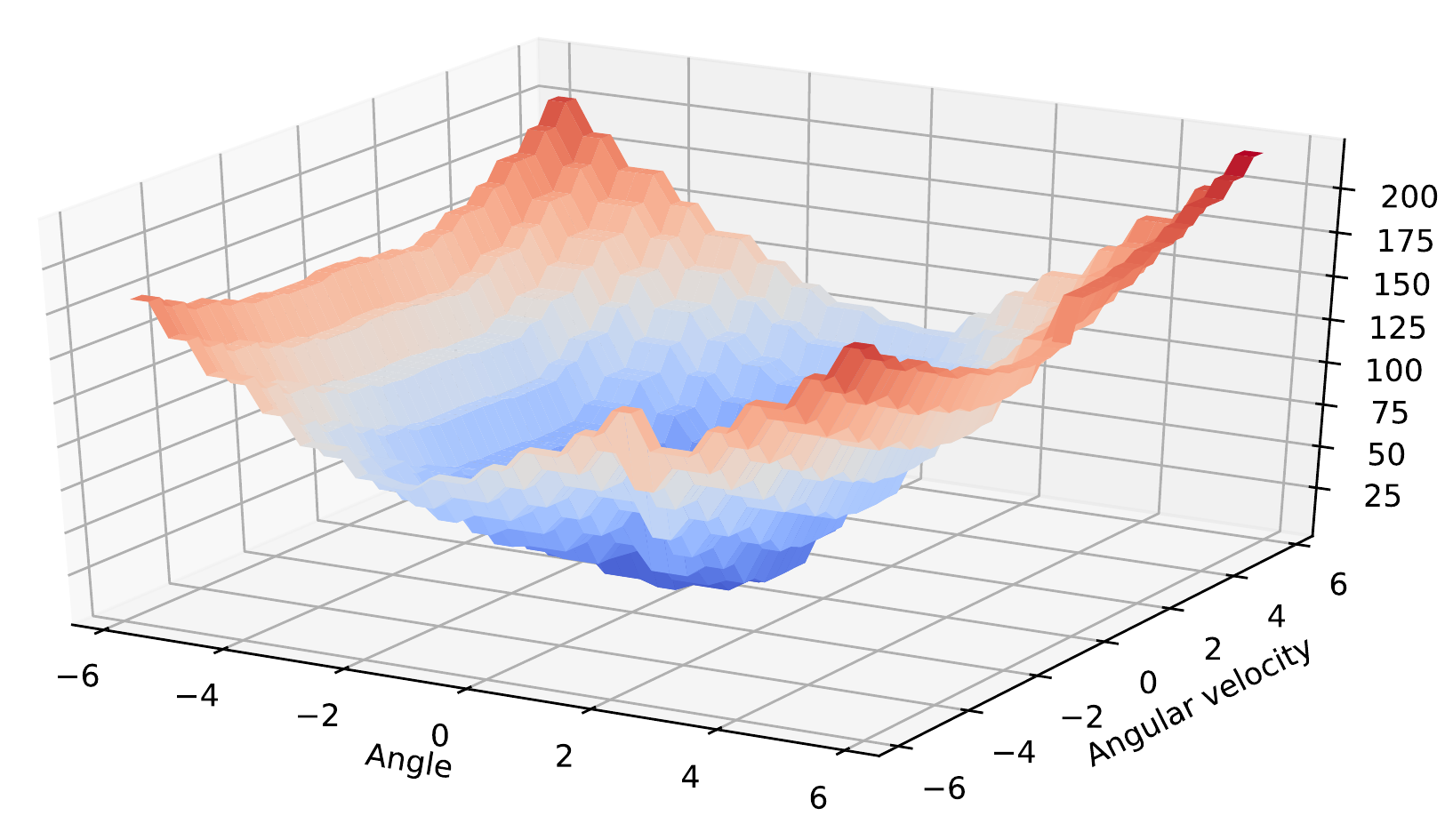}}
		\quad
		\subfigure[]{\label{Fig:uncertainty2}%
			\includegraphics[width=0.2\linewidth]{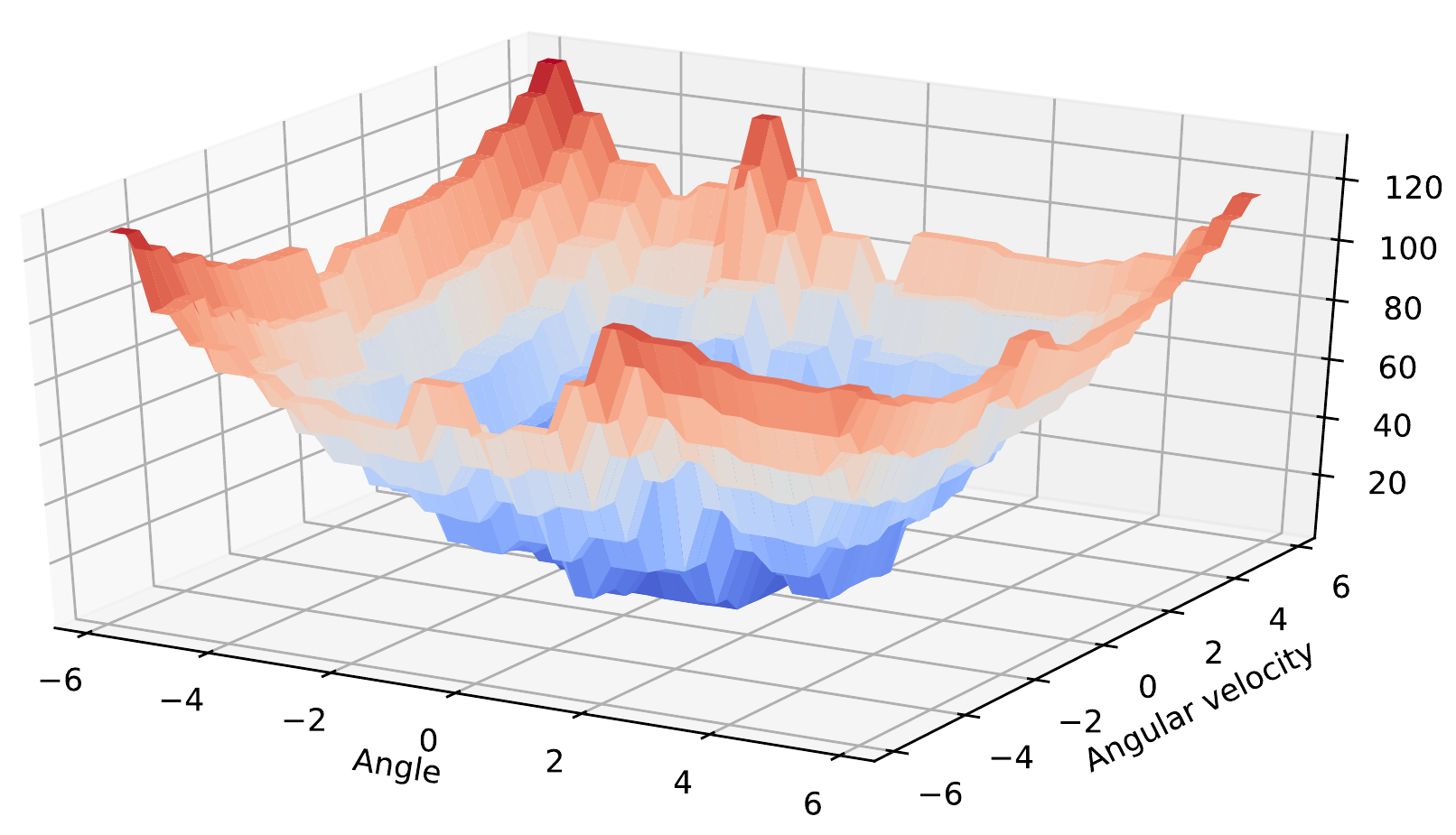}}
		\newline 
		\subfigure[]{\label{Fig:uncertainty3}%
			\includegraphics[width=0.2\linewidth]{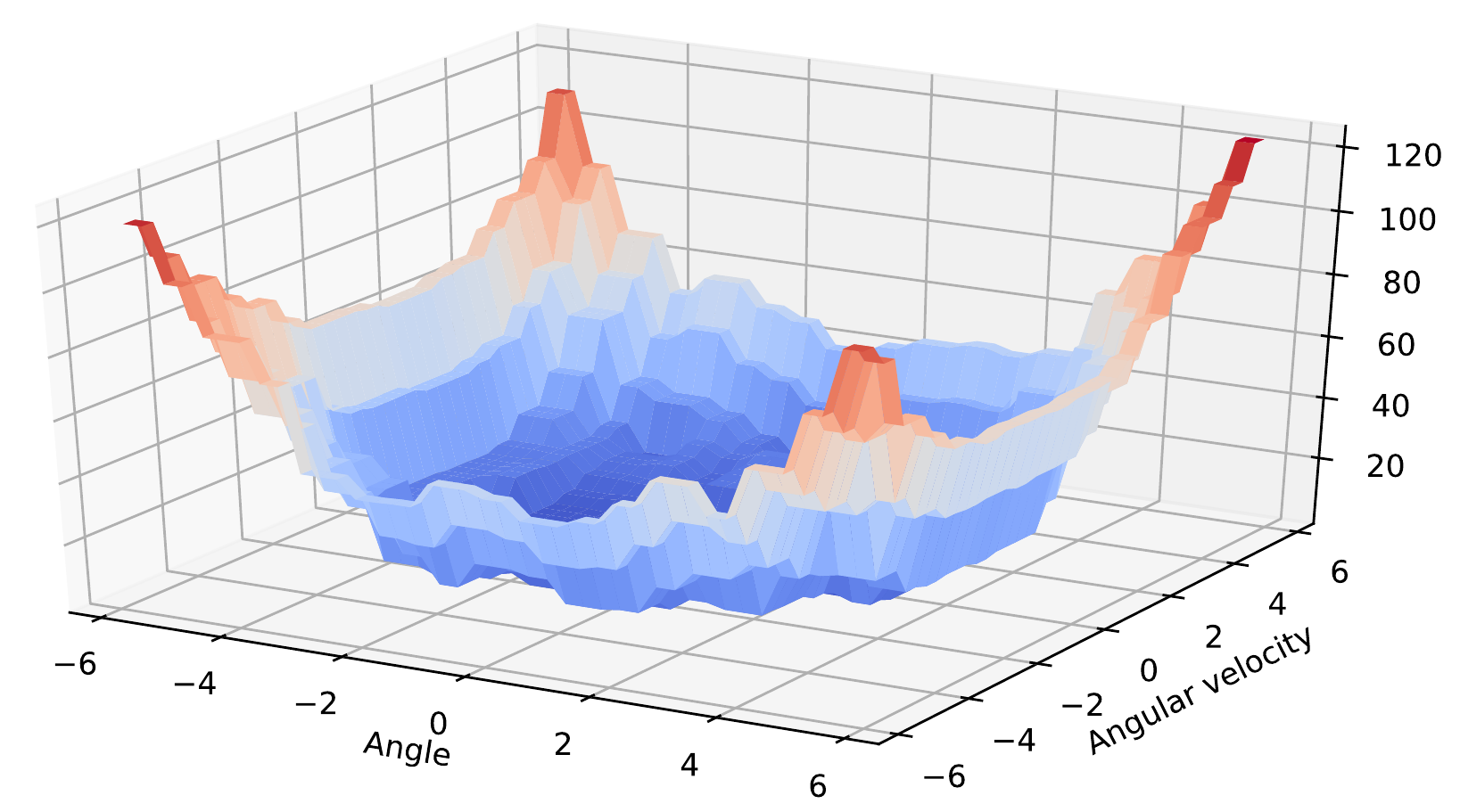}}%
		\quad
		\subfigure[]{\label{Fig:uncertainty4}%
			\includegraphics[width=0.2\linewidth]{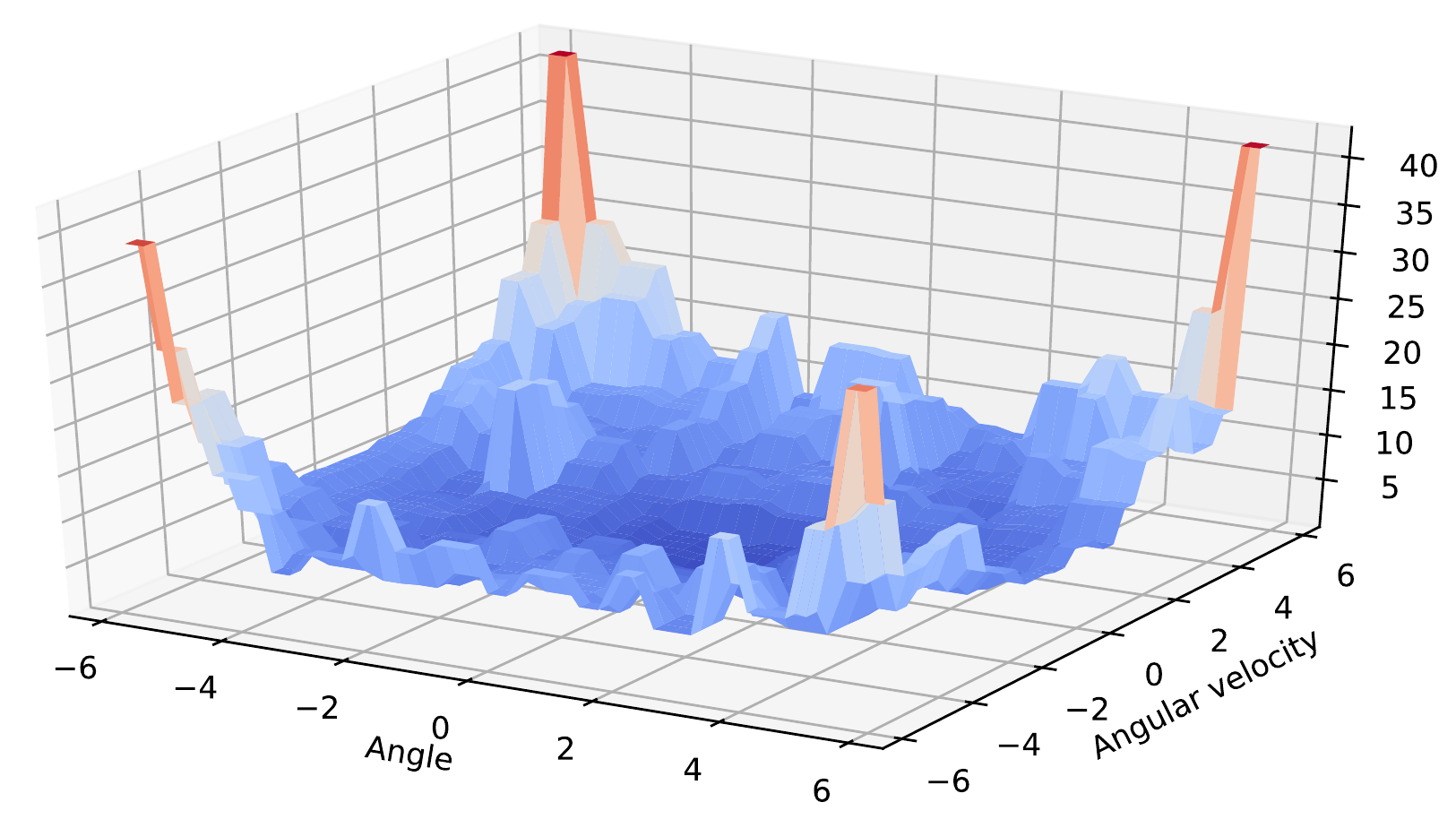}}
		\quad
		\subfigure[]{\label{Fig:uncertainty5}%
			\includegraphics[width=0.2\linewidth]{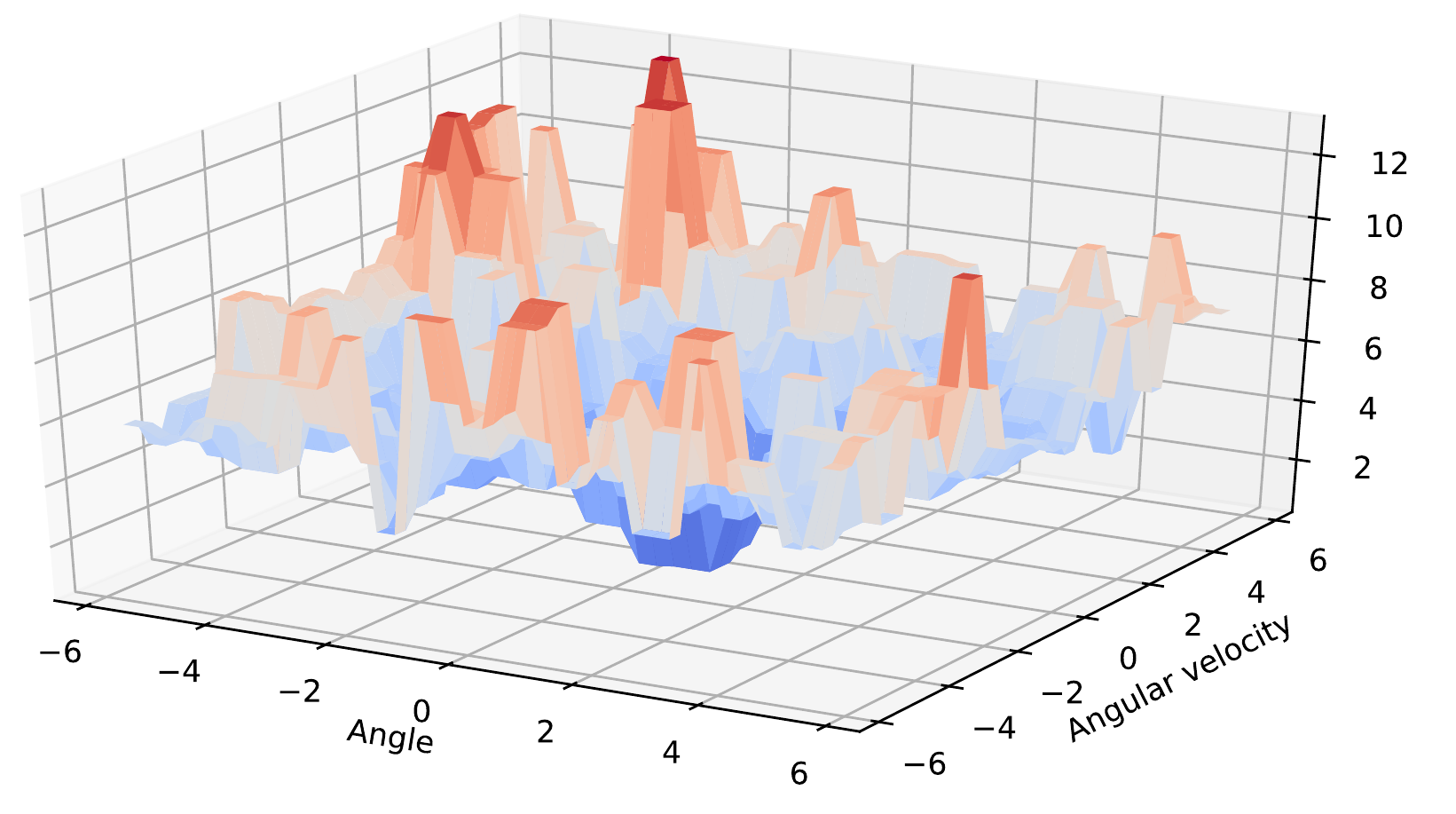}}
	}
\end{figure}

\begin{figure}[htbp]
	\floatconts 
	{fig:samples}
		{\caption{The step-by-step results of the sampling procedure and the sample gap obtained are provided that correspond to the results in Fig. \ref{Fig:learning_model} and \ref{Fig:uncertainty}. It is evident that by acquiring more samples over different steps and expanding the learning area, the sample gaps are decreased effectively. }}
	{%
	\hspace{12mm}	\subfigure[]{\label{Fig:samples0}%
			\includegraphics[width=0.2\linewidth]{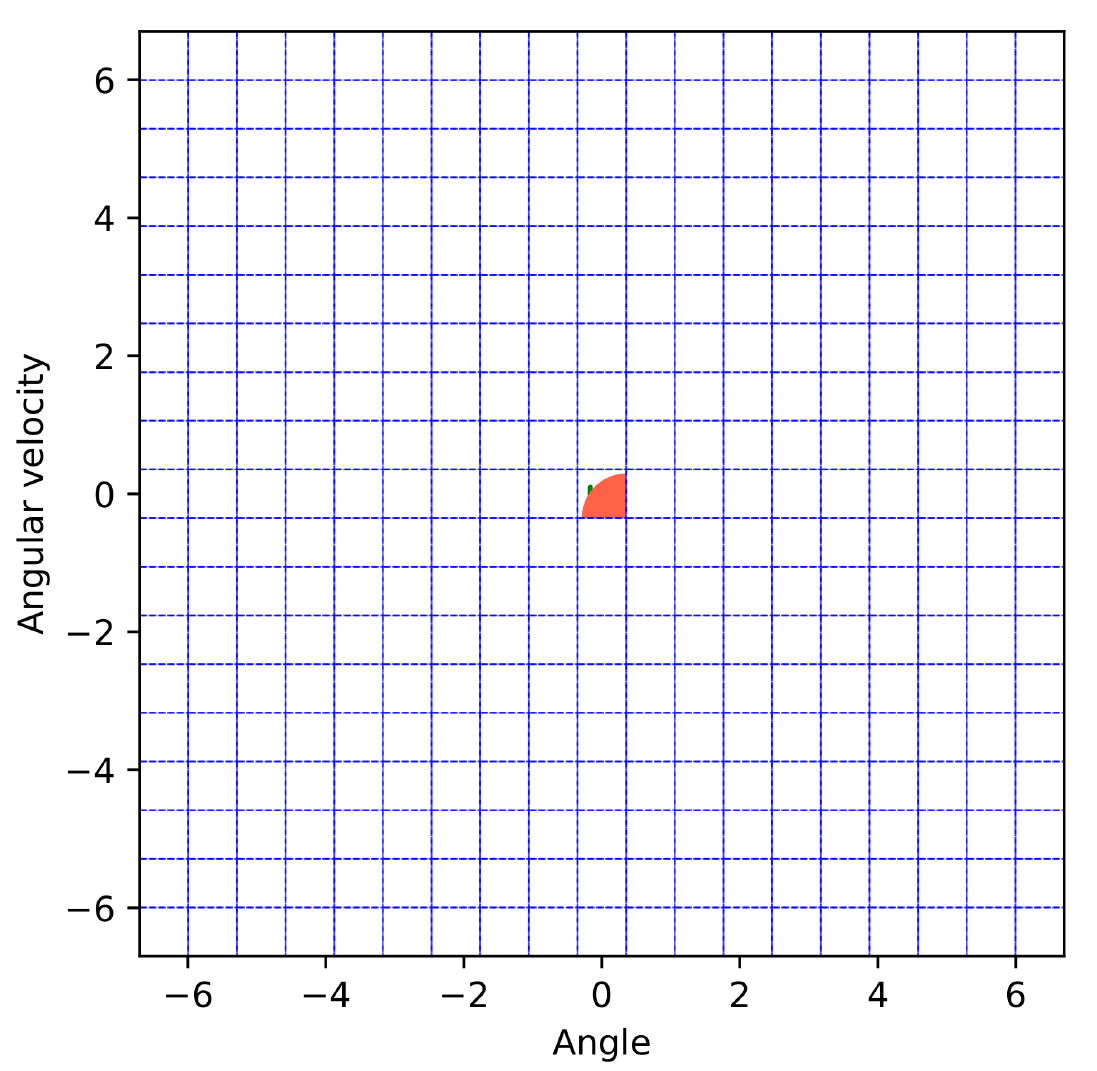}}%
		\quad
		\subfigure[]{\label{Fig:samples1}%
			\includegraphics[width=0.2\linewidth]{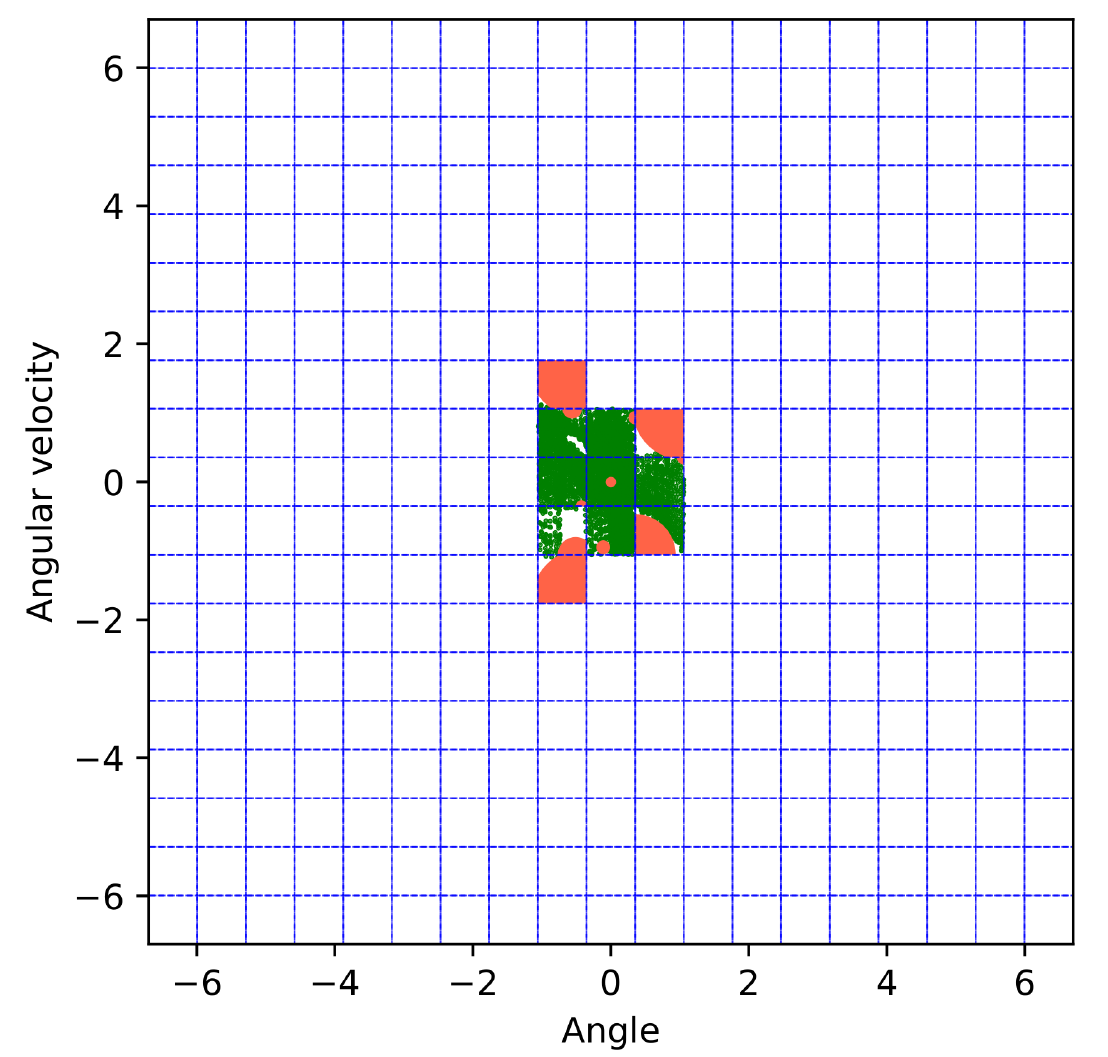}}
		\quad
		\subfigure[]{\label{Fig:samples2}%
			\includegraphics[width=0.2\linewidth]{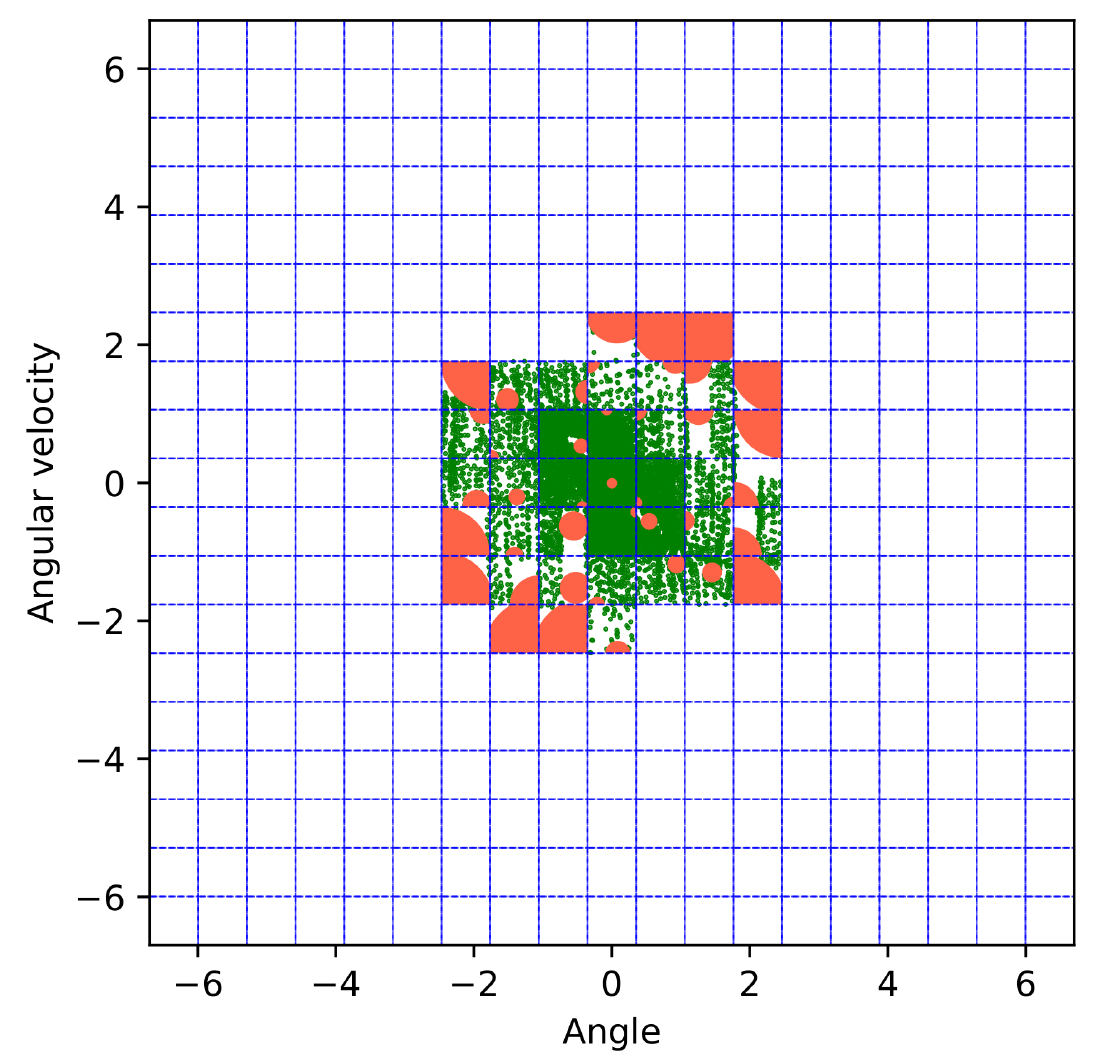}}
		\newline 
		\subfigure[]{\label{Fig:samples3}%
			\includegraphics[width=0.2\linewidth]{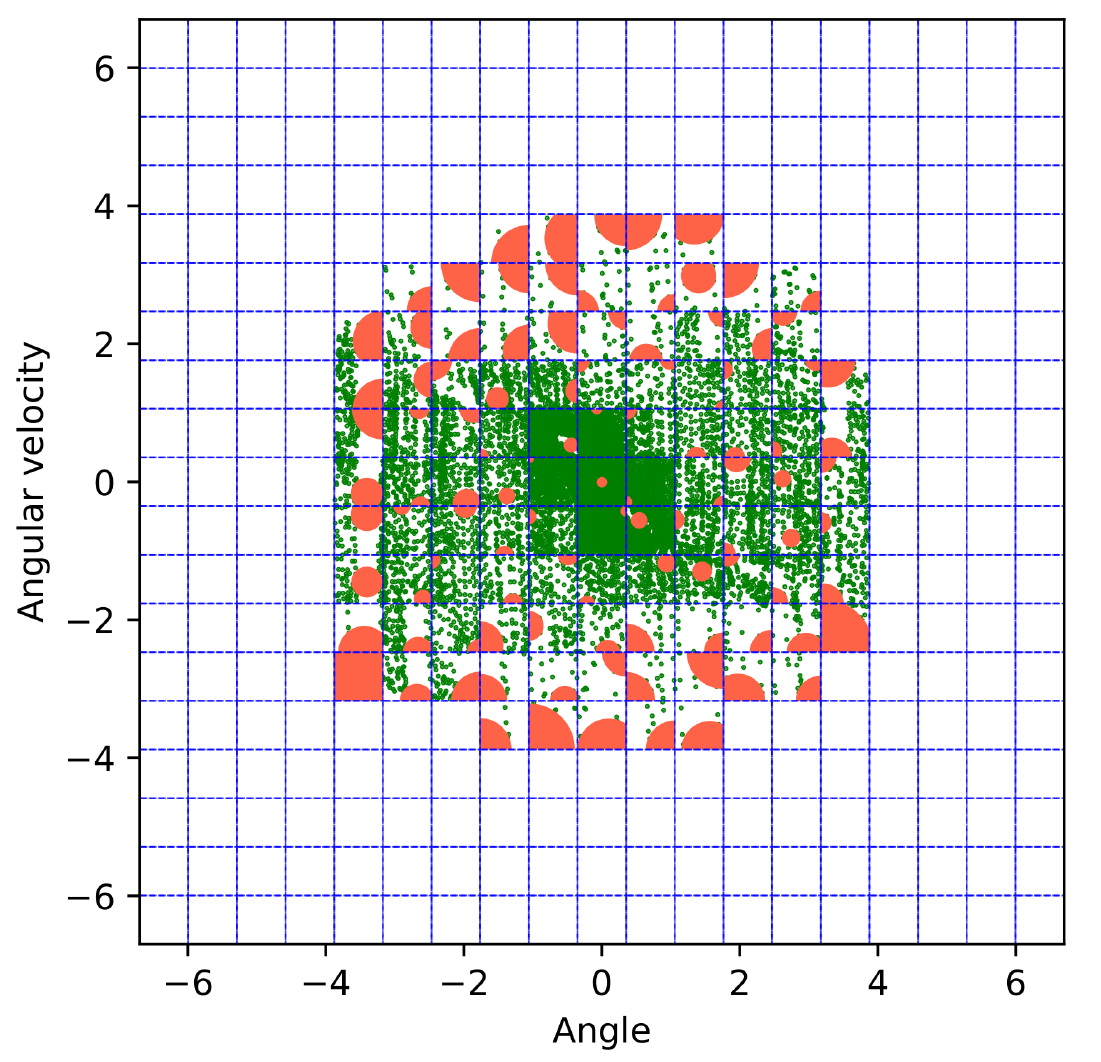}}%
		\quad
		\subfigure[]{\label{Fig:samples4}%
			\includegraphics[width=0.2\linewidth]{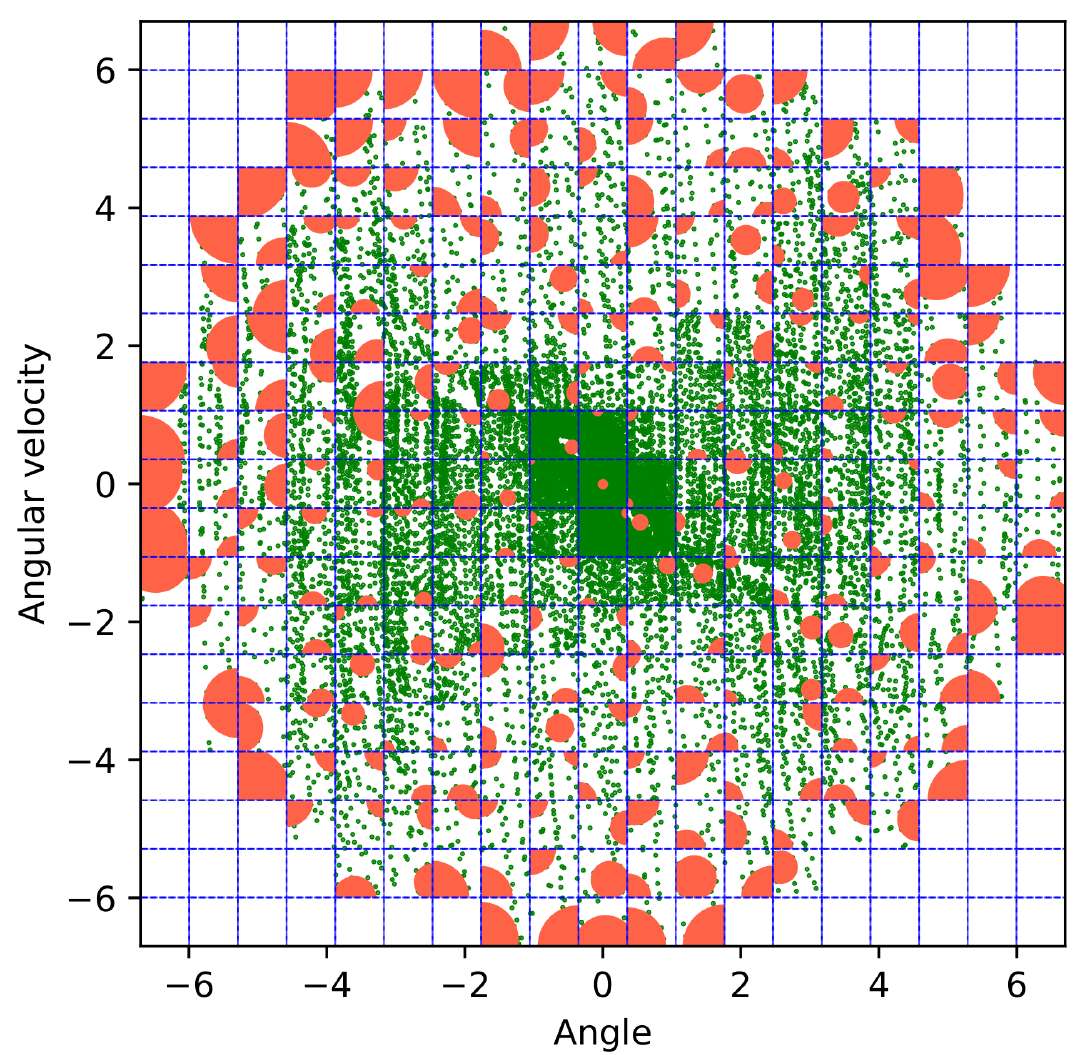}}
		\quad
		\subfigure[]{\label{Fig:samples5}%
			\includegraphics[width=0.2\linewidth]{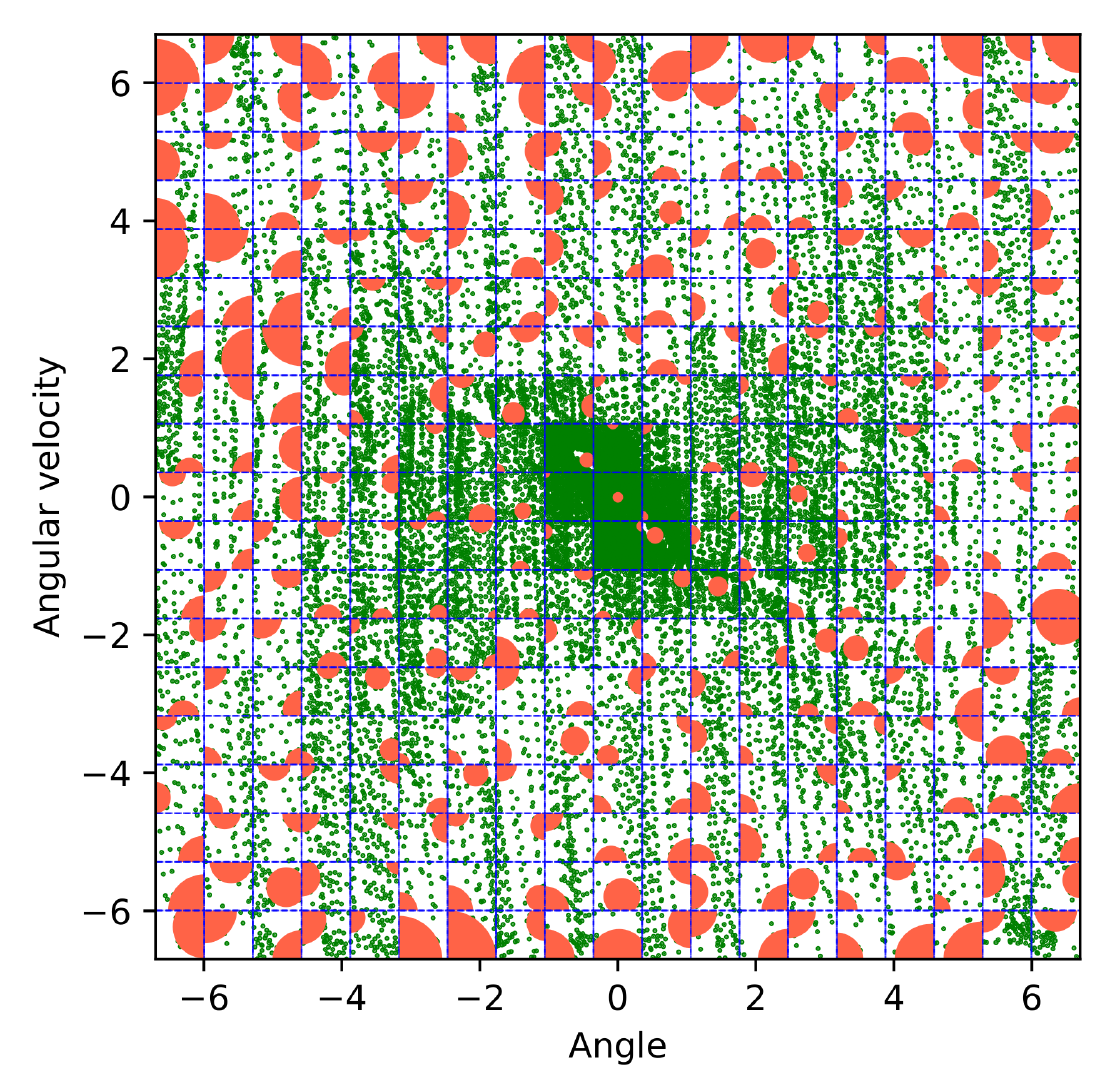}}
	}

\end{figure}

\subsection{Pendulum System} \label{apex:pend}
The state space description of the system is given as 
\begin{align}\label{SOL_pend}
&\dot{x}_1=x_2,\ \nonumber \\ 
&\dot{x}_2=\frac{G}{l}\sin(x_1)-\frac{b}{mL^2}x_2+\frac{1}{mL^2}u,
\end{align}

where the parameters are taken from \cite{chang2020neural} ($G$ = 9.81, 
    $L$= 0.5, $m$ = 0.15, $b$ = 0.1). The performance criteria are defined by the choices of $Q=\text{diag}([2,1])$, $R=1$.

\subsection{The Learned Matrix \texorpdfstring{$\hat{P}$}{Lg}}

\begin{align*}
 \hat P= \begin{bmatrix}
 0.69371067  &0.02892586 & 0.1944487   &0.05196313\\
  0.02892586 & 0.26941371  &0.02718769 &-0.21348358\\
  0.1944487   &0.02718769  &0.69518109  &0.05041737\\
  0.05196313 &-0.21348358  &0.05041737  &0.33469316\\
 \end{bmatrix}.
\end{align*}

\subsection{Dynamic Vehicle System}
\label{apex:vehicl}
 According to \cite{pepy2006path}, we present the dynamic model of the vehicle implemented. Let us define the states $x$ and $y$ as the coordinate of the center of gravity in the 2D space, $\theta$ as the orientation of the vehicle, $v_y$ as the lateral velocity, and $r$ as the rate of the orientation. Moreover, the input of the system is given by the front-wheel angle $\delta_f$. Then, by assuming a constant longitudinal velocity $v_x$, the dynamical model of the vehicle can be written as
\begin{align*}
&\dot v_y=-\frac{C_{\alpha f}\cos\delta_f+C_{\alpha r}}{mv_x}v_y+\frac{-L_fC_{\alpha f}\cos\delta_f+L_rC_{\alpha r}}{I_zv_x}r+\frac{C_{\alpha f}\cos \delta_f}{m} \delta_f,\\
&\dot r= (\frac{-L_fC_{\alpha f}\cos\delta_f+L_rC_{\alpha r}}{mv_x}-v_x)v_y-\frac{L^2_fC_{\alpha f}\cos\delta_f+L_r^2C_{\alpha r}}{I_zv_x}r+\frac{L_fC_{\alpha f}\cos \delta_f}{I_z} \delta_f,\\
&\dot x=v_x \cos \theta - v_y\sin \theta,\\
&\dot y=v_x \sin \theta + v_y \cos \theta,\\
&\dot \theta = r,
\end{align*}
where $C_{xf}$, and $C_{xr}$ denote the cornering stiffness coefficients of the front and rear wheels. Moreover, the distance of the center of gravity from the front and rear wheels are given by $L_f$, and $L_r$. 

\subsection{Numerical Results for the Dynamic Vehicle System}

In this section, to better demonstrate the merits of the algorithm proposed, we implemented the approach on a more complex system. The kinematic model of the vehicle system does not consider the real behavior of the system at high speeds where skidding is possible.  Therefore, \cite{pepy2006path} proposed a more realistic dynamical model of the vehicle which is implemented in this paper.
\subsubsection{Identify and Control}
Control objective is to minimize the distance of the vehicle from the goal point $(x,y)_{\text{goal}}=(70,70)$ in the 2D map. To achieve the objective, we run the vehicle from some random initial position and yaw values. Then, the identification and control procedures are done in a loop through different episodes. The longitudinal velocity of the vehicle is assumed to be constant in this system similar to \cite{pepy2006path}. Therefore, to minimize the cost given by the control objective, the vehicle converges to some circular path around the goal point, which is indeed the optimal path for the problem defined. Fig. \ref{Fig:results_vehicle}, contains the simulation results within an episode of learning, including the state and control signals, prediction error for each state, the value function and the modes.

\subsection{Comparison of Runtime Results}
To analyze the computational aspects of the proposed technique, we provide the runtime results while learning the dynamics and obtaining the control for both examples implemented. The proposed framework is considered as an online technique. Hence, in the applications, the computational complexity of the real-time identification and control becomes more important. Therefore, we here focus in the complexity of the online learning procedure rather than the verification technique which can be done offline.  Fig. \ref{fig:runtime} includes the runtime results separately for the identification and control units. Accordingly, the identifier and the controller can be updated in at most $1ms$ and $20ms$ respectively. Accordingly, it can be observed that for the higher dimensional system with an also larger number of partitions, the computations still remain in a tractable size that can allow real-time applications, considering the nature of the systems.
\begin{figure}[htbp]
	\floatconts
	{Fig:results_vehicle}
	{\caption{ ($a$). The state and control signals are illustrated within an episode of learning. It can be clearly seen from the position signals that the vehicle is able to minimize the distance from the goal point and converge to a circular path around the goal point $(x,y)_{\text{goal}}=(70,70)$ after some time of learning. \\ ($b$). The graph denotes the evolutions of the value function, the norm of the control parameters for the active mode, the prediction error, and the active mode of the piecewise model that correspond to the results in Fig \ref{PW_fig:vehicle1}. It can be seen that the value function learned is minimized.\\ ($c$). Corresponding to Fig. \ref{PW_fig:vehicle1} and \ref{PW_fig:vehicle2}, the prediction results of the learned model is compared with the original system within an episode of learning. It can be observed that the prediction signals shown by the black lines can match the ones obtained from the original dynamics. }}
	{%
		\begin{minipage}{1.05\textwidth} 
			\centering
		\hspace{-1cm}	\subfigure[]{\label{PW_fig:vehicle1}%
				\includegraphics[width=0.33\linewidth]{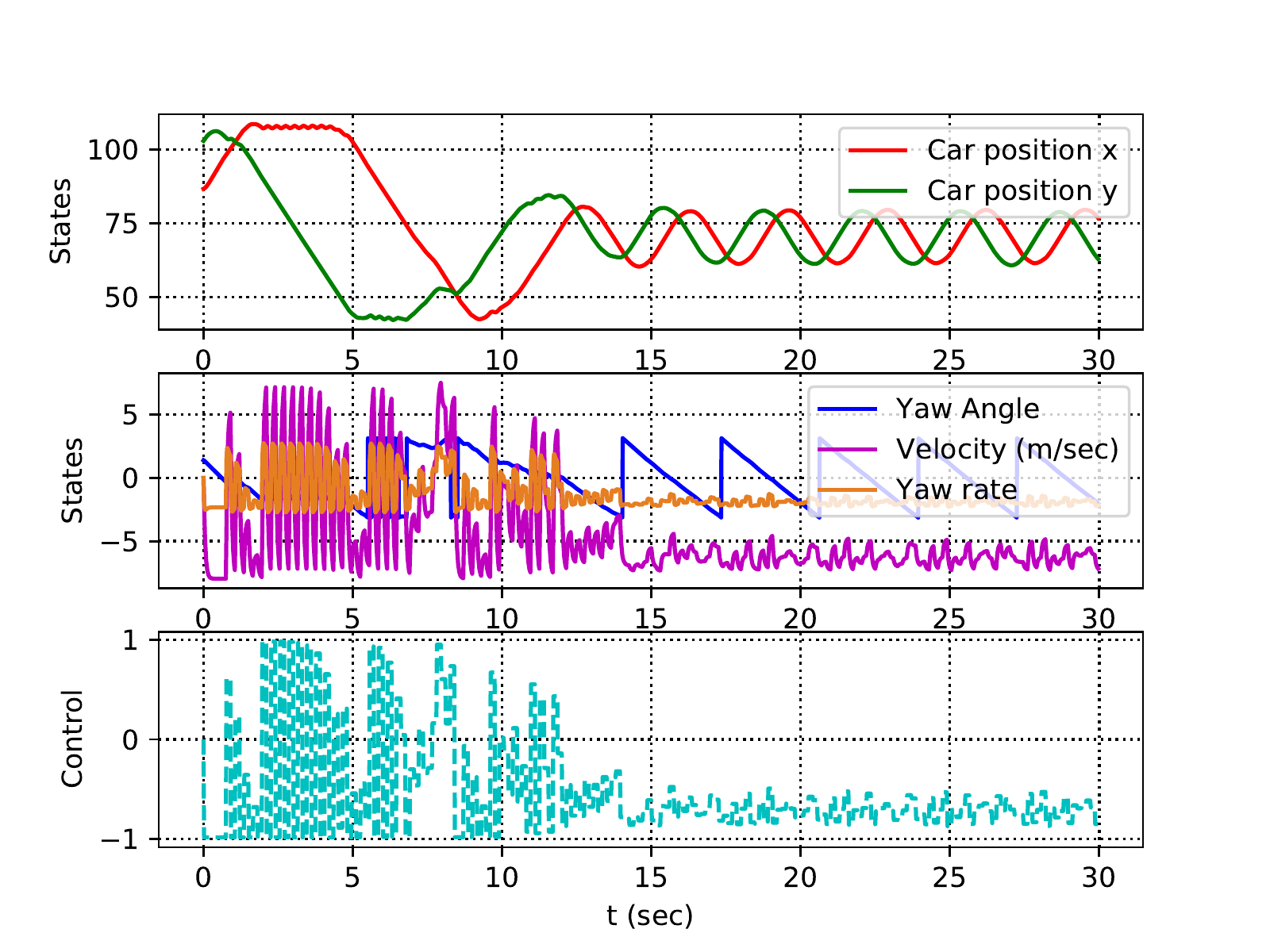}
			}%
			\subfigure[]{\label{PW_fig:vehicle2}%
				\includegraphics[width=0.3\linewidth]{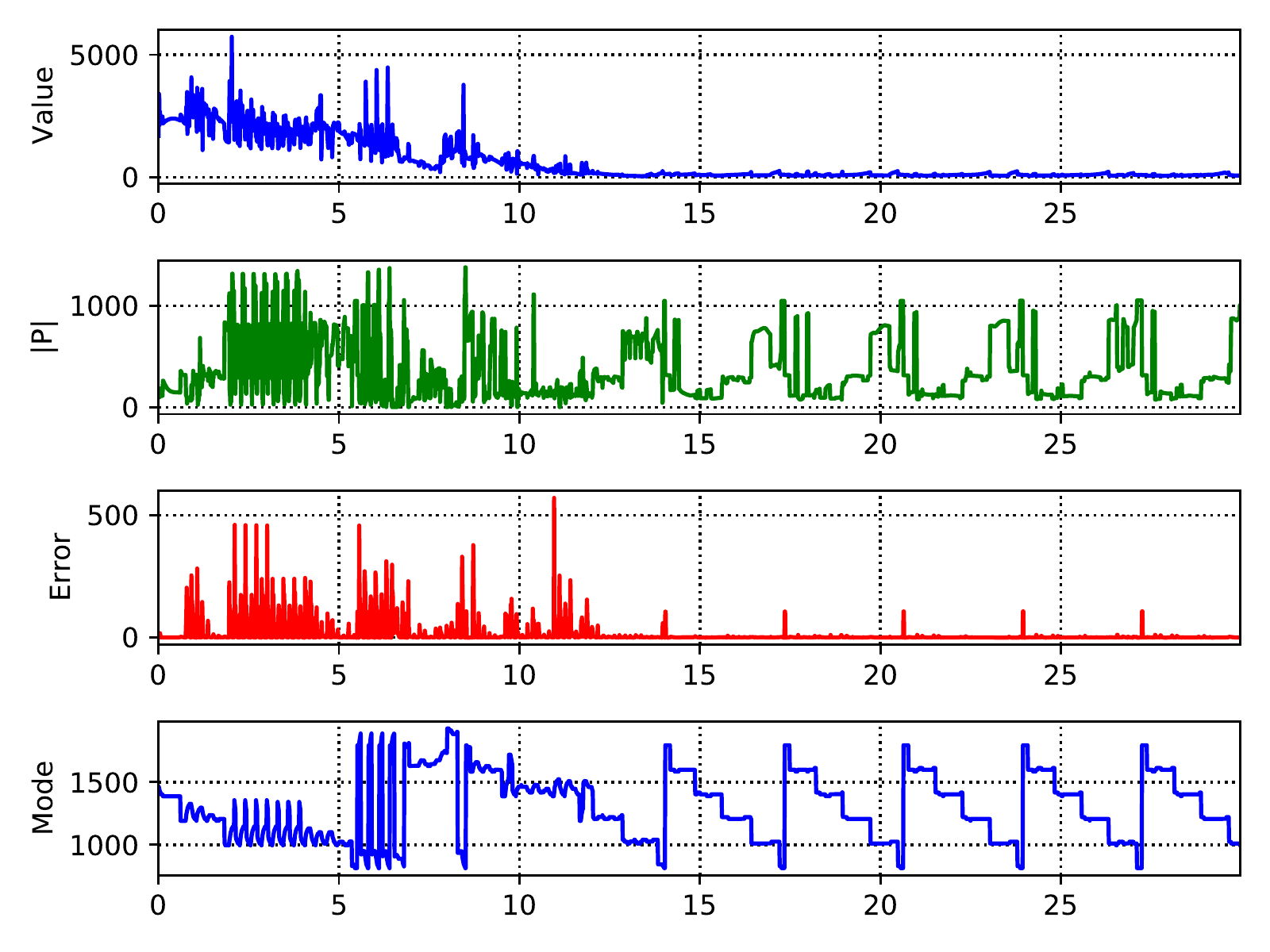}
			}			
			\subfigure[]{\label{PW_fig:vehicle3}%
				\includegraphics[width=0.3\linewidth]{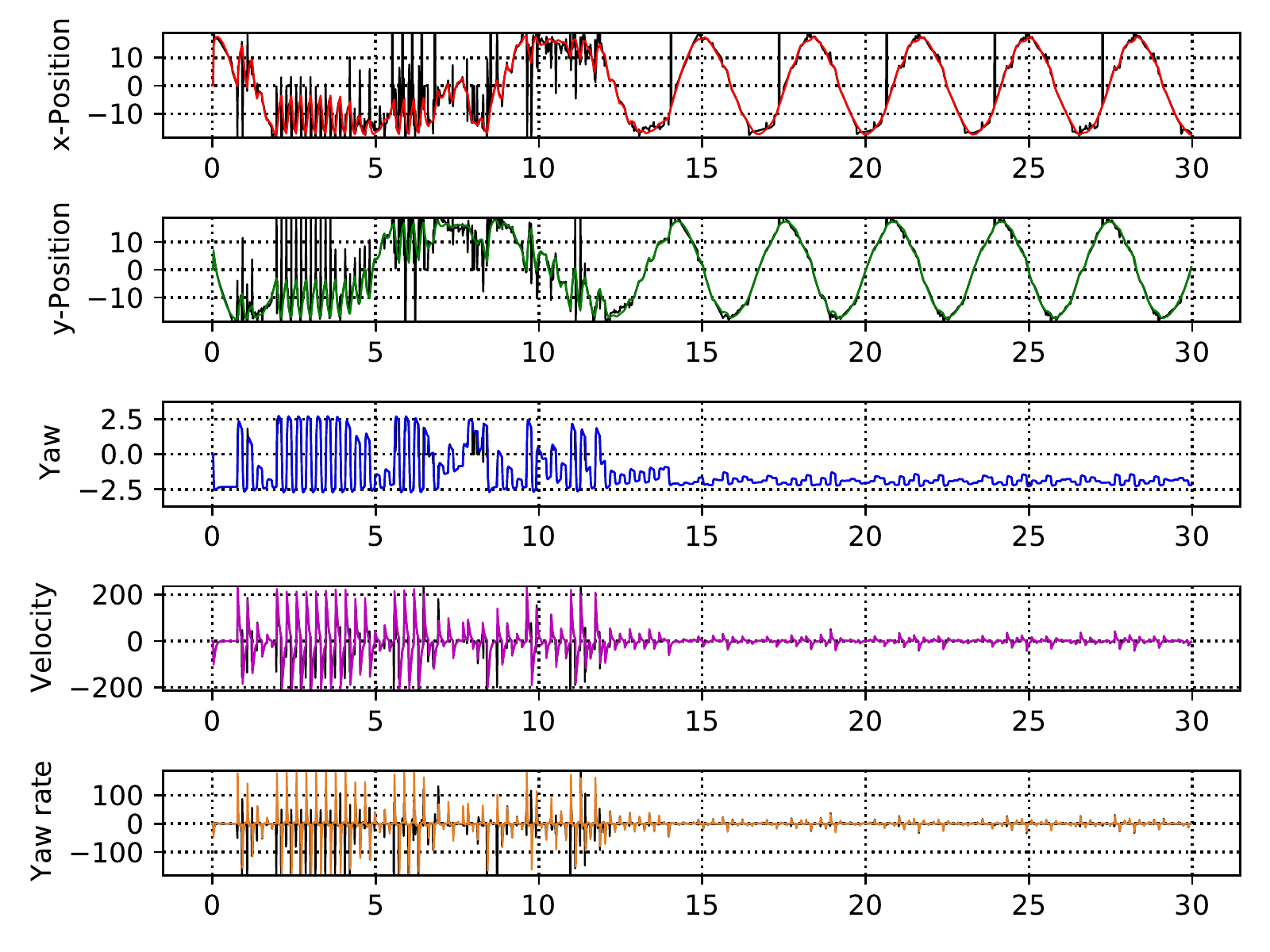}
			}

		\end{minipage}%
	}
\end{figure}

\begin{figure}[ht]
    \centering
    \includegraphics[width=10cm]{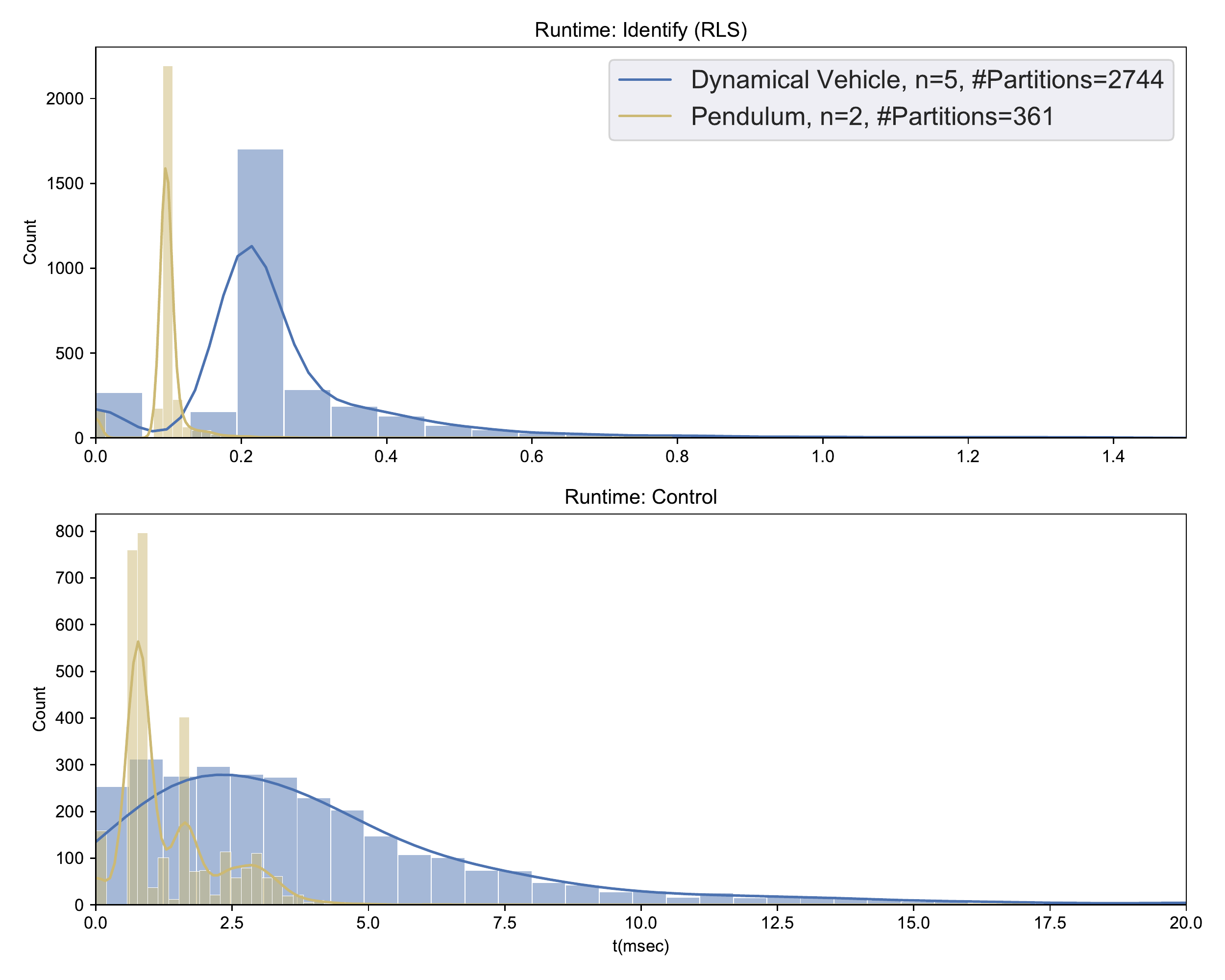}
    \caption{A comparison of the runtime results for the identification and control procedures separately is given for the implemented examples.  }
    \label{fig:runtime}
\end{figure}

\end{document}